\theoremstyle{plain}
\newtheorem{thm}{Theorem}[section]
\newtheorem{lem}[thm]{Lemma}
\newtheorem{prop}[thm]{Proposition}
\newtheorem{cor}[thm]{Corollary}
\newtheorem*{thm*}{Theorem}
\newtheorem*{prop*}{Proposition}
\newtheorem*{cor*}{Corollary}
\newtheorem{thmintro}{Theorem}
\newtheorem{corintro}[thmintro]{Corollary}
\theoremstyle{definition}
\newtheorem{defn}[thm]{Definition}
\newtheorem{rmk}[thm]{Remark}
\newtheorem{ass}[thm]{Assumption}
\newtheorem*{rmk*}{Remark}
\newtheorem*{ex*}{Example}
\newtheorem*{quest*}{Question}
\newtheorem*{defn*}{Definition}
\newcommand{\acts}{\curvearrowright}
\newcommand{\ra}{\rightarrow}
\newcommand{\Ra}{\Rightarrow}
\newcommand{\cu}{\subseteq}
\renewcommand{\o}{\circ}
\newcommand{\mc}{\mathcal}
\newcommand{\mf}{\mathfrak}
\newcommand{\Z}{\mathbb{Z}}
\newcommand{\s}{\sigma}
\newcommand{\eps}{\epsilon}
\newcommand{\Om}{\Omega}
\newcommand{\om}{\omega}
\renewcommand{\L}{\Lambda}
\newcommand{\g}{\gamma}
\newcommand{\G}{\Gamma}
\newcommand{\CAT}{{\rm CAT(0)}}
\DeclareMathOperator{\diam}{diam}
\newcommand{\E}{\mc{E}}
\renewcommand{\H}{\mc{H}}
\begin{document}

\title{Connected components of Morse boundaries of graphs of groups} 
\author[E. Fioravanti]{Elia Fioravanti}\address{Max Planck Institute for Mathematics, Bonn, Germany}\email{fioravanti@mpim-bonn.mpg.de} 
\author[A. Karrer]{Annette Karrer}\address{Technion, Haifa, Israel}\email{annettek@campus.technion.ac.il} 

\begin{abstract}
Let a finitely generated group $G$ split as a graph of groups. If edge groups are undistorted and do not contribute to the Morse boundary $\partial_MG$, we show that every connected component of $\partial_MG$ with at least two points originates from the Morse boundary of a vertex group. 
%When vertex groups have totally disconnected Morse boundary, this implies that $\partial_MG$ is totally disconnected.

Under stronger assumptions on the edge groups (such as wideness in the sense of Dru\c{t}u--Sapir), we show that Morse boundaries of vertex groups are topologically embedded in $\partial_MG$.
\end{abstract}

\maketitle
%\tableofcontents

\section{Introduction.}

Morse boundaries $\partial_MG$ of finitely generated groups $G$ were introduced by Charney, Sultan and Cordes \cite{Charney-Sultan,Cordes} in an attempt to extend to all groups some of the most fundamental properties of Gromov boundaries of hyperbolic groups \cite{Gromov}. Importantly, every quasi-isometry between finitely generated groups extends to a homeomorphism of their Morse boundaries, which can provide a useful tool to distinguish  quasi-isometry classes of groups.

When $G$ is not Gromov-hyperbolic, the topology of $\partial_MG$ is rather unwieldy (for instance, it is not 1st countable or compact), which often makes an explicit computation difficult. Despite this, Charney, Cordes and Sisto recently showed that essentially all known examples of infinite, \emph{totally disconnected} Morse boundaries fall into just two homeomorphism classes: the Cantor space and the $\om$--Cantor space \cite{Charney-Cordes-Sisto}. 

While groups whose Morse boundary is a Cantor space are fully classified (they are hyperbolic, hence virtually free), it appears that the class of groups with $\om$--Cantor boundary is rather large. For instance, it includes all irreducible, non-free right-angled Artin groups, as well as all non-geometric graph manifold groups \cite{Charney-Cordes-Sisto}.

Our first goal is to expand the class of finitely generated groups known to have totally disconnected Morse boundary. A natural source of examples is provided by graphs of groups. We restrict to the situation where no Morse ray in $G$ stays at bounded distance from an edge group, and show that all nontrivial connected components of $\partial_MG$ originate from vertex groups (Theorem~\ref{components intro}). 

As motivation for our assumptions, recall that surface groups split as graphs of groups with cyclic edge groups and non-abelian free vertex groups. Thus, when edge groups are allowed to contain Morse rays in $G$, the Morse boundary $\partial_MG$ can be connected (here:~a circle) even if all Morse boundaries of vertex groups are totally disconnected (Cantor sets).

More generally, we investigate necessary and sufficient conditions for Morse boundaries of vertex groups to be topologically embedded in $\partial_MG$, even when they are not totally disconnected. Our main result in this direction is Theorem~\ref{embedding intro}, which applies, for instance, to all graphs of groups with (undistorted, one-ended) solvable edge groups. This is new already for JSJ decompositions of irreducible $3$--manifold groups.

\medskip
As a set, $\partial_MG$ is defined as the collection of all Morse geodesic rays in a fixed Cayley graph of $G$, identifying rays at finite Hausdorff distance. Our topology of choice will always be the \emph{direct limit} topology from \cite{Charney-Sultan,Cordes}. 

We mention that an alternative topology on $\partial_MG$ was introduced by Cashen and Mackay in \cite{Cashen-Mackay}. The latter has the advantage of being metrisable, while retaining quasi-isometric invariance. However, it appears to be more complicated to describe explicitly, and we are not aware of a single non-empty Morse boundary of a non-hyperbolic group for which the Cashen--Mackay topology can be given ``intrinsic'' characterisations in the spirit of \cite{Charney-Cordes-Sisto}.

The following relative version of Morse boundaries plays an important role in our results.

\begin{defn*}[Relative Morse boundary]
Let $H\leq G$ be finitely generated groups, with $H$ undistorted in $G$. The \emph{relative Morse boundary} of $H$ in $G$, denoted $(\partial_MH,G)$, is the subset of $\partial_MH$ consisting of points represented by rays that remain Morse in the Cayley graphs of $G$. 

We always endow $(\partial_MH,G)$ with the subspace topology coming from $\partial_MH$. {\bf This is the opposite of the convention adopted in \cite{Karrer}.}
\end{defn*}

Note that we always have a natural injection $(\partial_MH,G)\hookrightarrow\partial_MG$. Our first result greatly extends the main theorem of \cite{Karrer}, while providing a significantly simpler proof.

\begin{thmintro}\label{components intro}
Let a finitely generated group $G$ split as a graph of groups. Suppose that:
\begin{itemize}
\item all edge groups are finitely generated and undistorted in $G$;
\item $(\partial_ME,G)=\emptyset$ for every edge group $E$.
\end{itemize}
If a connected component $\mc{C}\cu\partial_MG$ is not a singleton, then $\mc{C}$ is contained in the image of the natural injection $(\partial_MV,G)\hookrightarrow\partial_MG$ for a vertex group $V\leq G$. Furthermore, if $V_1,V_2$ are distinct vertex groups, then $(\partial_MV_1,G)\cap(\partial_MV_2,G)=\emptyset$.
\end{thmintro}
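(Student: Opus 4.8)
The plan is to work through the Bass--Serre tree $T$ of the splitting and the coarse orbit map $p\colon G\to T$. The one geometric consequence I would extract from the hypotheses is that \emph{no subray of a Morse geodesic in $G$ stays within bounded distance of a coset $gE$ of an edge group}. Indeed, since $gE\cong E$ is undistorted, such a subray would lie at finite Hausdorff distance from a geodesic ray contained in $gE$; that ray is Morse in $G$ (it fellow-travels a Morse ray) and it is also Morse inside $E$ (a quasigeodesic of $E$ is a quasigeodesic of $G$, hence fellow-travels the ray in $G$, hence in $E$ by undistortion), so a $G$-translate of it would represent a point of $(\partial_ME,G)=\emptyset$.

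A standard Bass--Serre argument, fed this fact, yields the expected dichotomy for a Morse ray $\gamma$ in $G$: either (a) $\gamma$ eventually lies in a bounded neighbourhood of a vertex-group coset $G_{\tilde w}:=\mathrm{Stab}(\tilde w)$ --- in which case, since vertex groups are undistorted, $[\gamma]$ lies in the image of $(\partial_MG_{\tilde w},G)\hookrightarrow\partial_MG$ --- or (b) $p(\gamma)$ leaves every bounded subset of $T$ and converges to a point of $\partial T$. The sharper input I need is that an $\mathfrak{N}$-Morse ray crosses the wall $W_{\tilde f}$ dual to an edge $\tilde f$ only finitely often and with excursions of depth bounded in terms of $\mathfrak{N}$: comparing $\gamma$ between two consecutive crossings with the quasigeodesic that ``detours'' through the edge coset and applying the Morse property bounds the depth, and a subray crossing $W_{\tilde f}$ infinitely often would then fellow-travel the edge coset, contradicting the displayed fact. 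It follows that, for each edge $\tilde f$, the side of $W_{\tilde f}$ on which a ray eventually lies defines a $\{\pm\}$-valued function on $\partial_MG$ whose restriction to each compact stratum $\partial_M^{\mathfrak{N}}G$ is continuous --- given $[\rho]$ eventually on one side, pick a time at which $\rho$ is deep on that side; a nearby $[\rho']$ agrees with $\rho$ up to that time within an $\mathfrak{N}$-controlled error, so $\rho'$ is still deep there, and the excursion bound forbids it from returning --- hence these level sets are clopen in $\partial_MG$.

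For the final clause: if $[\rho]$ lay in the images of both $(\partial_MV_1,G)$ and $(\partial_MV_2,G)$ with $V_1,V_2$ fixing distinct vertices $\tilde v_1\neq\tilde v_2$, then $\rho$ would eventually lie in $N_C(V_1)\cap N_C(V_2)$; a Bass--Serre normal-form computation (using undistortedness of edge groups) identifies this coarse intersection with a bounded neighbourhood of $V_1\cap V_2$, which is either finite or contained in a coset of an edge group --- contradicting, respectively, that $\rho$ is a ray or the displayed fact. For the main clause: let $\mathcal{C}$ be a connected component containing distinct points $[\gamma]\neq[\gamma']$. Since $\mathcal{C}$ is contained in every clopen set meeting it, $\gamma$ and $\gamma'$ lie on the same side of every wall, so they definitively cross exactly the same edges of $T$; thus $p(\gamma)$ and $p(\gamma')$ stay at bounded Hausdorff distance and $\mathcal{C}$ has a single ``direction'' $\delta\in T\cup\partial T$. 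If $\delta=\tilde w$ is a vertex, then every ray in $\mathcal{C}$ is of type (a) at $\tilde w$ and $\mathcal{C}\subseteq\mathrm{im}\big((\partial_MG_{\tilde w},G)\hookrightarrow\partial_MG\big)$, as desired. So the whole theorem reduces to ruling out $\delta\in\partial T$: no nontrivial component can consist of rays all escaping to the same boundary point $\xi$ of $T$.

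This reduction step is where I expect the real difficulty to lie. Heuristically, two Morse rays $\gamma,\gamma'$ with $\gamma,\gamma'\to\xi$ are distinguished by the positions at which they thread through the successive edge cosets $Q_j$ dual to the edges on the $T$-ray toward $\xi$: if those positions stay uniformly close then $\gamma$ and $\gamma'$ are at finite Hausdorff distance (on each chamber they are Morse geodesics with coarsely matching endpoints, hence uniformly close), and otherwise some finite prefix of $\gamma$ reaches a point far from all of $\gamma'$ --- which should produce a clopen subset of $\partial_MG$ separating them. The obstacle is that the threading-position data is only well-defined up to an error depending on the Morse gauge, so the natural map from the tree-like part of $\partial_MG$ to the totally disconnected space $\partial T\times\prod_jQ_j$ is only coarsely continuous. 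I would try to overcome this either by exploiting integrality of distances in the Cayley graph to manufacture honestly clopen ``agrees-with-a-prefix'' sets after renormalising, or by a Baire-category argument reducing such a component to a single stratum $\partial_M^{\mathfrak{N}}G$ (on which the gauge-dependent errors are uniformly bounded and the separation does go through). Obtaining the uniform comparison between Hausdorff distance and threading positions is the technical core.
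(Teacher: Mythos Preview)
Your setup --- the Bass--Serre tree, the orbit map, the observation that Morse rays cannot linger near edge cosets, and the resulting clopen partition $\partial_MG = M(\mf{h})\sqcup M(\mf{h}^*)$ for each halfspace --- matches the paper's approach exactly, and your reduction to ruling out the case $\delta\in\partial T$ is correct.

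The genuine gap is in that last step. The paper's argument is short and avoids the gauge-dependence issue you identify, and the idea you are missing is this: pick two distinct points $\alpha^+,\alpha^-$ of the component $\mc{C}$ and join them by a single bi-infinite $N$--Morse geodesic \emph{line} $\alpha\cu\G$ (this exists by \cite[Proposition~3.11]{Cordes}). Since both ends lie in $\mc{C}$, they share the same ultrafilter $\sigma$, and the Morse lemma applied to $\alpha$ (which has one fixed gauge $N$) shows that the whole line $\alpha$ lies in the $D(N)$--neighbourhood of $\G(\mf{h})$ for \emph{every} $\mf{h}\in\sigma$. Now fix any vertex $g\in\alpha$. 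If $\mf{h}\in\sigma\setminus\sigma_{gp}$, then $g\in f_p^{-1}(\mf{h}^*)$ yet $g$ is within $D(N)$ of $\G(\mf{h})$; by the coarse-separation property of $\G(\mf{e})$, this forces $\G(\mf{e})$ to meet the $D(N)$--ball around $g$. Local finiteness (each edge of $\G$ lies in only finitely many $\G(\mf{e})$) then makes $\sigma\setminus\sigma_{gp}$ finite, so $\sigma$ contains no infinite descending chain and is principal, $\sigma=\sigma_x$. The same cocompactness argument then pins $\alpha$ near $G_x$.

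The point is that working with a single line replaces your family of rays --- each with its own gauge --- by one object with one gauge, so the ``threading position'' errors you worry about never arise. Your proposed Baire-category and renormalisation strategies are unnecessary, and it is not clear they would succeed. For the disjointness of $(\partial_MV_1,G)$ and $(\partial_MV_2,G)$, your coarse-intersection argument can be made to work, but the paper's is simpler: choose a halfspace $\mf{h}$ with $x\in\mf{h}$, $y\in\mf{h}^*$ and observe that a common boundary point would have to lie in both $M(\mf{h})$ and $M(\mf{h}^*)$.
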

% note that this fails if edge groups have nontrivial relative Morse boundary in G: e.g. surface groups split as graphs of free groups, but their boundary is connected.

The following consequence of Theorem~\ref{components intro} has some overlap with \cite[Theorem~1.2]{Charney-Cordes-Sisto}. The advantage is that here we do not require acylindricity of the splitting, nor that vertex groups have trivial Morse boundary.

\begin{corintro}\label{cor intro}
Under the assumptions of Theorem~\ref{components intro}, suppose additionally that $(\partial_MV,G)$ is totally disconnected for every vertex group $V\leq G$. Then $\partial_MG$ is totally disconnected.
\end{corintro}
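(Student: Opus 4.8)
The plan is to deduce the statement from Theorem~\ref{components intro} by a short contradiction argument. Suppose $\partial_MG$ is \emph{not} totally disconnected. Since a topological space is totally disconnected precisely when every connected subset has at most one point, we can choose a connected component $\mc{C}\cu\partial_MG$ with $|\mc{C}|\geq 2$. By Theorem~\ref{components intro}, there is a vertex group $V\leq G$ so that $\mc{C}$ is contained in the image of the natural injection $\iota\colon(\partial_MV,G)\hookrightarrow\partial_MG$; put $A:=\iota^{-1}(\mc{C})\cu(\partial_MV,G)$, a set with $|A|\geq 2$.

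The key step is topological: I would use that $\iota$ is a \emph{topological embedding}, where $(\partial_MV,G)$ carries the subspace topology from $\partial_MV$ fixed in the definition above. Note that mere continuity and injectivity of $\iota$ would not be enough, since a totally disconnected space can map continuously and bijectively onto the interval $[0,1]$; what is really needed is that the subspace topology on $\mc{C}$ coming from $\partial_MG$ coincides with the one it acquires, via $\iota^{-1}$, from $\partial_MV$. I expect this to be available either from the basic properties of relative Morse boundaries of undistorted subgroups (undistortedness controls Morse gauges, so the induced maps between Morse strata are continuous) or, more directly, from the proof of Theorem~\ref{components intro} itself, which presumably realizes $\mc{C}$ explicitly inside the Morse boundary of $V$. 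Granting it, $\iota$ restricts to a homeomorphism of $A$ onto $\mc{C}$, so $A$ is a connected subset of $(\partial_MV,G)$ with at least two points.

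It then remains to invoke the elementary fact that subspaces of totally disconnected spaces are totally disconnected: for $a\in A$, the connected component of $a$ in $A$ is contained in its connected component in $(\partial_MV,G)$, which is $\{a\}$ by hypothesis. Hence the connected set $A$ is a singleton, contradicting $|A|\geq 2$, and we conclude that $\partial_MG$ is totally disconnected. (The disjointness clause of Theorem~\ref{components intro} is not needed for this.) The only non-formal ingredient is the topological embedding statement in the middle paragraph, which is where I would expect the main difficulty to lie.
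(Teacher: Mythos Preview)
Your overall strategy is the same as the paper's: pull a non-singleton component $\mc{C}$ back through $\iota$ to a connected subset of $(\partial_MV,G)$ and contradict total disconnectedness there. The gap is in the ``key step''. You assert (or expect) that $\iota\colon(\partial_MV,G)\hookrightarrow\partial_MG$ is a topological embedding, but this is \emph{false} under the hypotheses of Theorem~\ref{components intro}: the paper's own example $G=\Z^2\ast\Z=\langle x,y\rangle\ast_{\langle y\rangle}\langle y,z\rangle$ in the introduction shows that $\iota$ need not even be continuous. So the justification you hope for (``undistortedness controls Morse gauges'') does not exist at this level of generality, and the proof of Theorem~\ref{components intro} does not supply it either.

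What saves the argument is that you are asking for more than you need. You correctly note that continuity of $\iota$ alone would be useless; what you actually use is continuity of $\iota^{-1}$ on $\mathrm{im}(\iota)$, so that the connected set $\mc{C}$ is sent to a connected set $A$. This is exactly the content of Lemma~\ref{open map}: for any undistorted $H\leq G$, the injection $(\partial_MH,G)\hookrightarrow\partial_MG$ is an \emph{open} (equivalently closed) map, hence its inverse is continuous. With this substitution your argument goes through verbatim: $\mc{C}$ connected $\Rightarrow$ $A=\iota^{-1}(\mc{C})$ connected in $(\partial_MV,G)$ $\Rightarrow$ $|A|=1$, contradiction. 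This is precisely how the paper deduces Corollary~\ref{cor intro} (see Corollary~\ref{td cor}, which cites Lemma~\ref{open map} together with Corollary~\ref{undistorted vertex groups cor} for undistortedness of $V$). So: replace ``topological embedding'' by ``open injection'' and invoke Lemma~\ref{open map}; the rest of your write-up is fine.
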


We emphasise that the empty set is totally disconnected. Thus, in Corollary~\ref{cor intro}, the relative Morse boundaries $(\partial_MV,G)$ are allowed to be empty and we make no claim that $\partial_MG$ will be nonempty.

Corollary~\ref{cor intro} follows from Theorem~\ref{components intro} because, for every undistorted subgroup $H\leq G$, the natural inclusion $(\partial_MH,G)\hookrightarrow\partial_MG$ is an \emph{open} map (see Lemma~\ref{open map}).
% the inclusion $i$ is injective and open; so $i^{-1}$ is continuous; if the image of $i$ were not totally disconnected, it would contain a nontrivial component $\mc{C}$, and $i^{-1}(\mc{C})$ would be connected, contradiction.
However, even in the setting of Theorem~\ref{components intro}, the inclusions $(\partial_MV,G)\hookrightarrow\partial_MG$ need not be \emph{continuous}, as demonstrated by the following example.

\begin{ex*}
Consider the group $G=\Z^2\ast\Z=\langle x,y\rangle\ast\langle z\rangle$. It admits the splitting $G=\langle x,y\rangle\ast_{\langle y \rangle}\langle y,z\rangle$. Since $\langle y\rangle$ lies in the flat $\langle x,y\rangle$, it is undistorted and has trivial relative Morse boundary in $G$.

Consider the vertex group $V:=\langle y,z\rangle\simeq F_2$, which is also undistorted. The inclusion:
\[ (\partial_MV,G)\hookrightarrow\partial_MG \]
is not continuous. In order to see this, consider the rays labelled by $z^ny^nz^{\infty}$ and $z^{\infty}$, which all lie in $(\partial_MV,G)$. Since $V$ is hyperbolic, we have $z^ny^nz^{\infty}\ra z^{\infty}$ in the topology of $\partial_MV$. 

However, since the rays $z^ny^nz^{\infty}$ spend longer and longer in the flat $\langle x,y\rangle$, they are not uniformly Morse in $G$. It follows that they form a closed subset of $\partial_MG$ (every stratum of $\partial_MG$ contains only finitely many of them). Hence $z^ny^nz^{\infty}\not\ra z^{\infty}$ in the topology of $\partial_MG$. This last argument is taken from \cite[Section~5]{Murray}.
\end{ex*}

Of course, it would be desirable to have conditions ensuring that the injections ${(\partial_MV,G)\hookrightarrow\partial_MG}$ in Theorem~\ref{components intro} are topological embeddings, as this would be an important step towards fully characterising $\partial_MG$ in terms of boundaries of vertex groups. 

To this regard, note that a key feature of the above example is that, although $(\partial_M\langle y\rangle,G)=\emptyset$, we have $(\partial_M\langle y\rangle,V)\neq\emptyset$. This leads us to suspect that this kind of issue should not present itself if all edge groups have trivial relative Morse boundary in the incident \emph{vertex} groups.

We prove this guess under the following, potentially stronger assumption.

\begin{defn*}[Relatively wide]
Let $H\leq G$ be finitely generated groups, with $H$ undistorted in $G$.
\begin{itemize}
\item The group $G$ is \emph{wide} if none of its asymptotic cones $G_{\om}$ have cut points \cite{Drutu-Sapir-Top}.
\item We say that $H$ is \emph{relatively wide} in $G$ if, for every asymptotic cone $G_{\om}$, no two points of the limit $H_{\om}\cu G_{\om}$ are separated by a cut point of $G_{\om}$.
\end{itemize}
Note that $H$ is relatively wide in $G$ as soon as either $H$ or $G$ is wide.
\end{defn*}

Examples of wide groups include one-ended groups satisfying a law (e.g.\ solvable, uniformly amenable, Burnside, etc) \cite{Drutu-Sapir-Top}, one-ended groups with infinite centre, and various higher-rank lattices \cite{Drutu-Mozes-Sapir}.

Wide groups have empty Morse boundary. It is a well-known open question whether the converse holds. We record here the relative version of this question, as it might be easier to find counter-examples in the relative case.

\begin{quest*}
Let $H\leq G$ be finitely generated groups, with $H$ undistorted in $G$ and $(\partial_MH,G)=\emptyset$. Is $H$ relatively wide in $G$?
\end{quest*}

Conversely, it is easy to see that $(\partial_MH,G)=\emptyset$ holds as soon as $H$ is relatively wide in $G$. We can now state our second main result.

\begin{thmintro}\label{embedding intro}
Let a finitely generated group $G$ split as a graph of groups. Consider a vertex group $V\leq G$. Suppose that all incident edge groups $E\leq V$ are finitely generated, undistorted in $G$, and relatively wide in $V$. Then:
\begin{enumerate}
\item $V$ is undistorted in $G$ and $(\partial_MV,G)=\partial_MV$; 
\item the inclusion $\partial_MV\hookrightarrow\partial_MG$ is a topological embedding.
\end{enumerate}
\end{thmintro}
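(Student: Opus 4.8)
The plan is to prove (1) first, combining a standard tree-of-spaces surgery with one new input about how Morse geodesics of $V$ meet the incident edge groups, and then to deduce (2) almost formally from (1) together with Lemma~\ref{open map}. Fix the action of $G$ on the Bass--Serre tree $T$ of the splitting and a Cayley graph $X$ of $G$ with its coarse projection $\pi\colon X\to T$; realise $V$ as the vertex space $\pi^{-1}(v_0)$ (coarsely), the incident edge groups as edge spaces $Z_e\subseteq V$, and recall that in a tree any subpath of a path in $X$ that leaves $\pi^{-1}(v_0)$ and later returns must exit and re-enter through one and the same edge space. That $V$ is undistorted in $G$ (the first half of (1)) is then the usual argument: an $X$-geodesic between two points of $V$ becomes a path \emph{inside} $V$ of comparable length once each maximal subpath outside $V$ is replaced by a geodesic of the edge space it crosses, the length blow-up being linear precisely because the edge spaces are undistorted in $X$. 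The same inequality makes each incident edge group undistorted in $V$, which is in any case built into the relative-wideness hypothesis.

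The key new ingredient is the following bounded-penetration statement: for all $N$ and $R$ there is $L_0=L_0(N,R)$ such that no geodesic segment of length greater than $L_0$ inside a coset of an incident edge group $E$ can lie in the $R$-neighbourhood (inside $V$) of an $N$-Morse geodesic of $V$. I would prove this in asymptotic cones: a sequence of violating segments, rescaled by their lengths, converges in some asymptotic cone $V_\omega$ to a non-degenerate geodesic arc contained in a translate of the limit $E_\omega$ and --- since $R$ is fixed --- inside the limit of the ambient $N$-Morse geodesics; but every subsegment of an $N$-Morse geodesic is uniformly Morse, so any two distinct points of that limit geodesic are separated by a cut point of $V_\omega$, and we obtain two distinct points of a translate of $E_\omega$ separated by a cut point of $V_\omega$, contradicting relative wideness of $E$ in $V$. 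This is really the quantitative form of the (recorded) fact that a relatively wide subgroup has empty relative Morse boundary.

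Next I would show that every $N$-Morse geodesic $\gamma$ of $V$ is $N'$-Morse in $G$, with $N'=N'(N)$ depending only on $N$ and the splitting --- giving the second half of (1) \emph{and} the quantitative control that (2) needs. Given a $(\lambda,c)$-quasigeodesic $q$ of $X$ with endpoints on $\gamma$, push it into $V$ by replacing each excursion (with endpoints $p,p'$ in an edge space $Z_e$) by the geodesic $[p,p']_{Z_e}$, obtaining a path $\bar q$ inside $V$. \emph{The step I expect to be the main obstacle} is verifying that $\bar q$ is a quasigeodesic of $V$ with constants depending only on $\lambda,c$ and the edge-space distortion: its length is controlled as above, but one must also rule out backtracking at the junctions between retained $q$-pieces and inserted edge-space geodesics, which is the combinatorial bookkeeping typical of combination theorems and again uses undistortedness of the edge spaces. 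Granting this, Morseness of $\gamma$ in $V$ places $\bar q$ --- hence every inserted segment $[p,p']_{Z_e}$ --- in the $M$-neighbourhood of $\gamma$ for some $M=M(N,\lambda,c)$; the bounded-penetration lemma then forces the length of $[p,p']_{Z_e}$ below $L_0(N,M)$, so $d_X(p,p')\le L_0(N,M)$ and each excursion has length at most $\lambda L_0(N,M)+c$; therefore $q$ stays within $M+\lambda L_0(N,M)+c$ of $\gamma$ in $X$. Thus $(\partial_MV,G)=\partial_MV$, and an $N$-Morse ray of $V$ is $N'(N)$-Morse in $G$.

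Finally, (2). By (1) the relative Morse boundary $(\partial_MV,G)$ equals $\partial_MV$, so Lemma~\ref{open map} (with $H=V$) says the inclusion $\iota\colon\partial_MV\hookrightarrow\partial_MG$ is an \emph{open} map; it is injective, being the natural injection of an undistorted subgroup; and it is \emph{continuous} because the gauge bound $N\mapsto N'(N)$ sends the stratum $\partial_M^NV$ into the stratum $\partial_M^{N'}G$, so that a sequence converging in $\partial_MV$ --- which is eventually contained in a single stratum --- has all its $\iota$-images in a single stratum of $\partial_MG$, where a routine subsequence-extraction argument (using compactness of the strata and the fact that two Morse rays at finite Hausdorff distance represent the same boundary point) shows the images converge to the image of the limit. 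An injective, continuous, open map is a topological embedding, which is (2).
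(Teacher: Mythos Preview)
Your proposal has a genuine gap at exactly the step you flag as the ``main obstacle'': showing that the pushed-forward path $\bar q$ is a quasi-geodesic of $V$ with constants depending only on $(\lambda,c)$ and the distortion constants. This is not routine bookkeeping. The total length of $\bar q$ is indeed controlled, and the junction points $p_i,p_i'$ (which lie on $q$) do make quasi-geodesic progress; but a point in the interior of a long replacement segment $[p_i,p_i']_{Z_e}$ need not lie on $q$ and can coincide with a much earlier point of $\bar q$. For a concrete instance, take $E=\langle x\rangle\leq V=\langle x,y\rangle*\langle z\rangle$ (here $E$ sits in the abelian factor $\langle x,y\rangle\simeq\mathbb{Z}^2$, so $E$ is relatively wide in $V$), set $W=\langle x,w\mid [x,w]=1\rangle$ and $G=V*_E W$. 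The path $q=(z,1,x,\dots,x^n,x^nw,\dots,x^nw^n,x^{n-1}w^n,\dots,x^{-n}w^n,\dots,x^{-n})$ is a $(5,C)$--quasi-geodesic in $G$ with both endpoints in $V$, while its single excursion has endpoints $x^{\pm n}\in E$; the replacement $\bar q=(z,1,x,\dots,x^n,x^{n-1},\dots,1,\dots,x^{-n})$ hits the vertex $1$ twice at $\bar q$--length $2n$ apart, so $\bar q$ is not a quasi-geodesic with uniform constants. The combination theorems you allude to rely on hyperbolicity of vertex spaces or acylindricity of the splitting, neither of which is available here.

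The paper in fact opens the proof of Theorem~\ref{embedding intro} by sketching precisely your approach and naming this step as the obstruction, and then sidesteps it by working with \emph{divergence} rather than the Morse condition. One shows (Proposition~\ref{uniform divergence}) that for every geodesic $\alpha\subseteq V$ and all large $r$ one has $\delta_\alpha^{\Delta}(r,\eta\epsilon)\le K'\cdot\delta_\alpha^{\Gamma}(r,\epsilon)$: a short path in $G$ joining $\alpha(s\pm r)$ outside a ball about $\alpha(s)$ is decomposed via Lemma~\ref{path decomposition}, and each piece is replaced by a path in $V$ using relative wideness in the quantitative form of Lemma~\ref{rel as cut pts lem}(3). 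The resulting path need only have controlled length and avoid a slightly smaller ball --- \emph{it is not required to be a quasi-geodesic} --- which is exactly what makes the surgery go through. The effective equivalence between Morseness and super-linear divergence (Lemma~\ref{Morse vs divergence}) then converts the divergence inequality into the gauge bound $N\mapsto N'(N)$ (Corollary~\ref{uniformly Morse}). Your bounded-penetration lemma is correct but is not needed in this route; your deduction of~(2) from the gauge bound is essentially the paper's, phrased there via Morse-preserving maps in the sense of \cite{Cordes}.
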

% Note that $V$ will definitely not be a Morse subgroup of $G$ in general.

\begin{rmk*}
We emphasise that \emph{relative wideness} of $E$ in $V$ can be rephrased purely in terms of divergence: it is equivalent to the statement that geodesics in (a Cayley graph of) $E$, viewed as uniform quasi-geodesics in $V$, have \emph{uniform linear divergence} (see Lemma~\ref{rel as cut pts lem}(3)).
\end{rmk*}

\begin{corintro}\label{cor intro 2}
If every vertex group satisfies the assumptions of Theorem~\ref{embedding intro}, then every connected component of $\partial_MG$ is either a singleton or homeomorphic to a connected component of the Morse boundary of a vertex group.
\end{corintro}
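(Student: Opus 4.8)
The plan is to obtain this as an essentially formal consequence of Theorems~\ref{components intro} and~\ref{embedding intro}, the only real work being to verify the hypotheses of the former. First I would fix an edge group $E$. It is an incident edge group of some vertex group $V$, which by hypothesis satisfies the assumptions of Theorem~\ref{embedding intro}, so $E$ is finitely generated, undistorted in $G$, and relatively wide in $V$; in particular $(\partial_ME,V)=\emptyset$. I would then argue that $(\partial_ME,G)\cu(\partial_ME,V)$ inside $\partial_ME$, which forces $(\partial_ME,G)=\emptyset$ as well. Indeed, Theorem~\ref{embedding intro}(1) gives that $V$ is undistorted in $G$, and together with $E$ undistorted in $G$ this makes $E$ undistorted in $V$ (the word metric on $E$, the metric induced from $V$, and the metric induced from $G$ are pairwise comparable). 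Now if $\gamma$ is a geodesic ray of $E$ that is Morse in $G$, then, viewed in $V$, it is a quasi-geodesic lying in the undistorted subgroup $V\leq G$; hence any quasi-geodesic of $V$ with endpoints on $\gamma$ is a quasi-geodesic of $G$ with endpoints on $\gamma$, so it stays uniformly close to $\gamma$ in $G$ (as $\gamma$ is Morse in $G$), and therefore uniformly close to $\gamma$ in $V$ (again by undistortion of $V$ in $G$). Thus $\gamma$ is Morse in $V$, establishing the claim.

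With this in hand, $G$ satisfies the hypotheses of Theorem~\ref{components intro}: all edge groups are finitely generated, undistorted in $G$, and have empty relative Morse boundary in $G$. So if $\mc{C}\cu\partial_MG$ is a connected component with more than one point, Theorem~\ref{components intro} places $\mc{C}$ inside the image of the natural injection $(\partial_MV,G)\hookrightarrow\partial_MG$ for some vertex group $V$. By Theorem~\ref{embedding intro}, $(\partial_MV,G)=\partial_MV$ and the inclusion $\iota\colon\partial_MV\hookrightarrow\partial_MG$ is a topological embedding; moreover the natural injection above is precisely $\iota$ restricted to $(\partial_MV,G)=\partial_MV$, so $\mc{C}\cu\iota(\partial_MV)$. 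Then $\mc{D}:=\iota^{-1}(\mc{C})\cu\partial_MV$ satisfies $\iota(\mc{D})=\mc{C}$, and $\iota$ restricts to a homeomorphism $\mc{D}\to\mc{C}$.

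It then remains to check that $\mc{D}$ is a connected component of $\partial_MV$. It is connected, being homeomorphic to $\mc{C}$. For maximality, I would take any connected $\mc{D}'\cu\partial_MV$ with $\mc{D}\cu\mc{D}'$: then $\iota(\mc{D}')$ is a connected subset of $\partial_MG$ containing $\iota(\mc{D})=\mc{C}$, so maximality of the connected component $\mc{C}$ gives $\iota(\mc{D}')=\mc{C}$, and injectivity of $\iota$ gives $\mc{D}'=\mc{D}$. Hence $\mc{D}$ is a connected component of $\partial_MV$ homeomorphic to $\mc{C}$, as desired.

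The one step requiring genuine care is the reduction $(\partial_ME,G)=\emptyset$, i.e.\ transferring relative wideness of $E$ in $V$ into triviality of the relative Morse boundary of $E$ in $G$, which rests on the chain of undistortions $E\leq V\leq G$ supplied by Theorem~\ref{embedding intro}(1). Everything afterwards is a routine combination of the two main theorems together with the elementary fact that preimages of connected components under a topological embedding are connected components of the source.
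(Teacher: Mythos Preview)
Your proof is correct and is precisely the argument the paper has in mind: the corollary is stated without proof in the introduction, being left as a direct combination of Theorems~\ref{components intro} and~\ref{embedding intro}. Your only substantive addition is spelling out why $(\partial_ME,G)=\emptyset$ via the inclusion $(\partial_ME,G)\subseteq(\partial_ME,V)$ coming from undistortion of $V$ in $G$, which is exactly the right check, and your verification that $\mc{D}$ is a genuine connected component (rather than just a connected subset) of $\partial_MV$ is a detail worth making explicit.
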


Corollary~\ref{cor intro} gives many examples of groups with totally disconnected Morse boundary, but it does not further describe the topological spaces that may arise as boundaries. The remarkable \cite[Theorem~1.4]{Charney-Cordes-Sisto} shows instead that, for any finitely generated group $G$, the boundary $\partial_MG$ is an $\om$--Cantor space as soon as it is totally disconnected, non-compact, $\s$--compact, and contains a Cantor subspace. The last property is generally not hard to obtain: for instance, using acylindrical hyperbolicity of $G$ (if given) to construct a stable free subgroup of $G$ \cite{Sisto-Z,DGO}, or by applying Theorem~\ref{embedding intro} to any Cantor subspace that boundaries of vertex groups may have.

This suggests studying the following problem.

\begin{quest*}
In the setting of Corollary~\ref{cor intro 2}, suppose that all vertex groups have $\s$--compact Morse boundary. Is $\partial_MG$ then $\s$--compact?
\end{quest*}

\medskip
{\bf Acknowledgements.} The authors are grateful to the referee for their helpful suggestions. Fioravanti thanks the Max Planck Institute for Mathematics in Bonn, the University of Bonn and Ursula Hamenst\"adt for their hospitality and financial support while this work was being completed. Karrer was supported by the Israel Science Foundation (grant no.\ 1562/19).

\section{Preliminaries.}

To economise on constants, we will speak of $C$--quasi-geodesics when referring to $(C,C)$--quasi-geodesics. In the whole section, $X$ and $Y$ are proper geodesic metric spaces.

We denote closed metric balls by $\mc{B}(x,r)$ and closed metric neighbourhoods of subsets by $\mc{N}(A,r)$. Where necessary, we may add a subscript $\mc{B}_X(x,r)$ or $\mc{N}_Y(A,r)$ to specify the relevant space.

\subsection{Morse boundaries.}

We refer the reader to \cite{Cordes} for further details.

A quasi-geodesic $\g\cu X$ is \emph{N--Morse} for a function $N\colon[1,+\infty)\ra[0,+\infty)$ if every $C$--quasi-geodesic with endpoints on $\g$ is contained in the $N(C)$--neighbourhood of $\g$. The function $N$ is usually referred to as a \emph{Morse gauge} for $\g$.

Fix a basepoint $p\in X$. We define $\partial_M^NX_p$ as the set of $N$--Morse geodesic rays based at $p$, identifying rays at finite Hausdorff distance. Endowed with the compact-open topology, this space is compact and metrisable. Let $\partial_MX_p$ be the union of all spaces $\partial_M^NX_p$, as $N$ varies among all possible Morse gauges. We define a topology on $\partial_MX_p$ as follows: a subset $U\cu\partial_MX_p$ is open (resp.\ closed) if and only if all intersections $U\cap\partial_M^NX_p$ are open (resp.\ closed). 

If $q\in X$ is a different basepoint, we have a natural homeomorphism $\partial_MX_p\ra\partial_MX_q$ given by pairing rays at finite Hausdorff distance. Thus, the space $\partial_MX_p$ is independent of the choice of $p$ and we simply denote it by $\partial_MX$. We refer to $\partial_MX$ as the \emph{Morse boundary} of $X$. 

We record here the following standard properties of Morse quasi-geodesics for later use.

\begin{lem}\label{nearby ray}
\begin{enumerate}
\item[]
\item Let $\alpha\cu X$ be an $N$--Morse geodesic. Let $\beta\cu X$ be a $C$--quasi-geodesic with endpoints at distance $\leq C$ from those of $\alpha$. Then $d_{\rm Haus}(\alpha,\beta)\leq D$, where $D$ only depends on $C$ and $N$.
\item Let $\alpha\cu X$ be an $N$--Morse geodesic ray. Let $\beta\cu X$ be $C$--quasi-geodesic ray with the same starting point and $d_{\rm Haus}(\alpha,\beta)<+\infty$. Then $d_{\rm Haus}(\alpha,\beta)\leq D$ and $\beta$ is $N'$--Morse, where $D$ and $N'$ only depend on $C$ and $N$.
\item Let $\alpha\cu X$ be an $N$--Morse $C$--quasi-geodesic. Then there exists an $N'$--Morse geodesic $\beta\cu X$ with the same starting point and $d_{\rm Haus}(\alpha,\beta)\leq D$, where $D$ and $N'$ only depend on $C$ and $N$.
\item The restriction of an $N$--Morse $C$--quasi-geodesic to a sub-interval of its domain is always $N'$--Morse, where $N'$ only depends on $N$ and $C$.
\end{enumerate}
\end{lem}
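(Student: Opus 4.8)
The four statements are standard consequences of a single fact, the \emph{Morse Lemma for quasi-geodesics}: if $\g$ is an $N$--Morse $C_0$--quasi-geodesic in $X$ and $\delta$ is a $C$--quasi-geodesic whose endpoints lie within distance $C$ of those of $\g$, then $d_{\rm Haus}(\g,\delta)\leq D$ and $\delta$ is $N'$--Morse, for some $D$ and $N'$ depending only on $N$, $C_0$ and $C$. My plan is to first record this (it is well known; see \cite{Cordes} and references therein, or \cite{Charney-Sultan}) together with its usual proof: after extending $\delta$ by two geodesic segments of length $\leq C$ so that its endpoints agree exactly with those of $\g$ --- which alters the quasi-geodesic constant only in a controlled way --- the defining property of $N$--Morseness gives $\delta\cu\mc{N}(\g,N(\cdot))$, while the reverse containment $\g\cu\mc{N}(\delta,\cdot)$ follows because the nearest-point projection $\pi_\g$ onto an $N$--Morse quasi-geodesic is coarsely Lipschitz with constants depending only on $N$, so that the coarse path $\pi_\g\o\delta$ runs between the two endpoints of $\g$ and sweeps across all of $\g$. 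That $\delta$ is $N'$--Morse is then immediate from the Hausdorff bound, since any quasi-geodesic with endpoints on $\delta$ has endpoints a bounded distance from $\g$.

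Granting this, items (1) and (3) are near-tautologies. Item (1) is the Morse Lemma for $\g=\alpha$ a geodesic ($C_0=1$); the only thing to add is the honest two-sided Hausdorff bound, which is precisely the endpoint-matching step. Item (3), for a quasi-geodesic \emph{segment} $\alpha\colon[0,L]\ra X$, is the Morse Lemma with $\g=\alpha$ and $\delta$ a geodesic from $\alpha(0)$ to $\alpha(L)$. For a quasi-geodesic \emph{ray} I would obtain $\beta$ as an Arzel\`a--Ascoli limit (using properness of $X$) of geodesics $\beta_n$ from $\alpha(0)$ to $\alpha(n)$: by the segment case these are uniformly Morse and lie uniformly close to the corresponding initial subsegments of $\alpha$, and both properties pass to the limiting ray.

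Item (4) --- passing to a subinterval --- reduces, via item (3), to the case where $\alpha$ is an $N$--Morse \emph{geodesic} and one restricts to a subsegment $\alpha|_{[c,d]}$. Here I would argue directly with the projection: given a $C'$--quasi-geodesic $\sigma$ with endpoints $\alpha(c),\alpha(d)$, we have $\sigma\cu\mc{N}(\alpha,N(C'))$, and $\pi_\alpha\o\sigma$ is a coarse path in $\alpha$ running between $\alpha(c)$ and $\alpha(d)$. If some $\sigma(r)$ lay near $\alpha(w)$ with, say, $w<c$, this coarse path would have to return near $\alpha(c)$ at a later parameter, forcing $\sigma$ itself to come back within a bounded distance of its starting point $\alpha(c)$ at a parameter larger than $r$; as $\sigma$ is a $C'$--quasi-geodesic, this bounds $r$ and hence places $\sigma(r)$ a bounded distance from $\alpha(c)\in\alpha|_{[c,d]}$. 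The symmetric argument handles $w>d$, giving $\sigma\cu\mc{N}(\alpha|_{[c,d]},N'(C'))$ with $N'$ depending only on $N$.

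The one genuinely non-formal point is the uniformity asserted in item (2): given an $N$--Morse geodesic ray $\alpha$ and a $C$--quasi-geodesic ray $\beta$ with the same starting point and $d_{\rm Haus}(\alpha,\beta)=R<+\infty$, the bound must be made to depend only on $N$ and $C$. Comparing $\beta|_{[0,s]}$ with the initial segment $\alpha|_{[0,t]}$ whose endpoint is a nearest point of $\alpha$ to $\beta(s)$ --- legitimate because $R<+\infty$, with that endpoint receding to infinity as $s$ grows --- first yields, via items (1) and (4), a bound depending on $R$, and correspondingly that $\beta$ is Morse with a gauge depending \emph{a priori} also on $R$. To remove this dependence one has to invoke the Morse property of $\alpha$ at every scale, e.g.\ through its equivalent formulation as a contracting/sublinear-divergence condition: an excursion of $\beta$ out of the $D(N,C)$--neighbourhood of $\alpha$ would, since $s\mapsto d(\beta(s),\alpha)$ is coarsely Lipschitz and vanishes at $s=0$, persist over a stretch of $\beta$ whose length is proportional to its height, and the contracting estimate for $\alpha$ caps the height of any such excursion in terms of $N$ and $C$ alone; once the uniform Hausdorff bound is in hand, that $\beta$ is $N'$--Morse again follows from the Morse Lemma. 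I expect this scale-independent excursion bound to be the main --- indeed the only --- obstacle; everything else is bookkeeping with quasi-geodesic constants, and as all of this is standard I would keep the written proof short and defer to \cite{Cordes,Charney-Sultan}.
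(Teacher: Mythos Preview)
Your proposal is sound, and your closing recommendation --- keep the written proof short and defer to \cite{Cordes,Charney-Sultan} --- is exactly what the paper does, only more so: the paper's proof consists entirely of four citations (\cite[Lemma~2.1]{Cordes} for~(1), \cite[Corollary~2.5]{Cordes} for~(2), \cite[Lemma~2.5]{Charney-Sultan} together with the argument of \cite[Lemma~2.9]{Cordes} for~(3), and \cite[Lemma~3.1]{Liu} for~(4)), with no argument given. Your sketches go well beyond this and are correct; in particular you rightly isolate the $R$--independence in~(2) as the only substantive point, and the contracting/excursion route you outline is the standard way to obtain it. One minor ordering caveat: your~(3) for rays applies ``the segment case'' to $\alpha|_{[0,n]}$, which tacitly uses that initial subsegments of a Morse ray are uniformly Morse --- a case of~(4) --- while your~(4) in turn reduces to geodesics via~(3). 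This apparent circularity dissolves once you observe that your direct projection argument for~(4) works verbatim for quasi-geodesics (the Morse property of the \emph{ambient} $\alpha$ already puts $\sigma$ in a tube around $\alpha$, and the coarse-path argument then confines it to the relevant subsegment), so~(4) can be established first without invoking~(3). You might also add \cite{Liu} to your list of references, since that is where the paper sources~(4).
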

\begin{proof}
Part~(1) is easily deduced from \cite[Lemma~2.1]{Cordes}. Part~(2) is \cite[Corollary~2.5]{Cordes}. Part~(3) can be proved using \cite[Lemma~2.5]{Charney-Sultan} as in the proof of \cite[Lemma~2.9]{Cordes}. Part~(4) is \cite[Lemma~3.1]{Liu}.
\end{proof}

Let $f\colon Y\ra X$ be a quasi-isometric embedding. If $\g$ is a Morse geodesic in $Y$, the quasi-geodesic $f\o\g$ might still not be Morse in $X$. This motivates the following notion of \emph{relative Morse boundary}, which is equivalent to the one from the introduction.

\begin{defn}
Set $(\partial_MY,f):=\{[\g]\in\partial_MY \mid f\o\g \text{ is Morse in } X\}$. We endow $(\partial_MY,f)$ with the subspace topology coming from $\partial_MY$.
\end{defn}

If $[\g]\in(\partial_MY,f)$, then the quasi-geodesic ray $f\o\g$ is Morse in $X$, hence at finite Hausdorff distance from a Morse geodesic ray by Lemma~\ref{nearby ray}(3). This defines an injection:
\[f_*\colon (\partial_MY,f)\hookrightarrow\partial_MX.\]
The example in the introduction shows that $f_*$ is not continuous in general. However, it is always a closed map (equivalently, an open map, since $f_*$ is injective).

\begin{lem}\label{open map}
If $f\colon Y\ra X$ is a quasi-isometric embedding, $f_*\colon (\partial_MY,f)\hookrightarrow\partial_MX$ is a closed map.
\end{lem}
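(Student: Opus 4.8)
\textbf{Proof proposal for Lemma~\ref{open map}.}

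The plan is to show directly that $f_*$ sends closed sets to closed sets, using the fact that closedness in $\partial_MX$ is checked stratum-by-stratum against the compact metrisable spaces $\partial_M^NX_p$. So fix a Morse gauge $N$ and a closed subset $Z\cu(\partial_MY,f)$; I must show that $f_*(Z)\cap\partial_M^NX_p$ is closed in $\partial_M^NX_p$. Since the latter is compact metrisable, it suffices to take a sequence $[\alpha_i]\in Z$ with $f_*([\alpha_i])\in\partial_M^NX_p$ and $f_*([\alpha_i])\ra[\xi]$ in $\partial_M^NX_p$, and produce a point of $Z$ mapping to $[\xi]$. The key observation is that a uniform Morse gauge on the image side pulls back to a uniform Morse gauge on the source side: because $f$ is a $(\l,\l)$--quasi-isometric embedding for some $\l$, each $\alpha_i$ is a $C$--quasi-geodesic in $Y$ whose image $f\o\alpha_i$ is an $N$--Morse $C'$--quasi-geodesic in $X$ (with $C,C'$ depending only on $\l$), and hence $\alpha_i$ is $N_0$--Morse in $Y$ for a gauge $N_0=N_0(N,\l)$ independent of $i$. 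Thus $[\alpha_i]$ all lie in the single compact metrisable stratum $\partial_M^{N_0}Y_q$.

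Passing to a subsequence, I may therefore assume $[\alpha_i]\ra[\alpha]$ in $\partial_M^{N_0}Y_q$, where $\alpha$ is an $N_0$--Morse geodesic ray in $Y$. Since $Z$ is closed in $(\partial_MY,f)$, which carries the subspace topology from $\partial_MY$, and $Z\cap\partial_M^{N_0}Y_q$ is closed in that stratum, we get $[\alpha]\in Z$; in particular $[\alpha]\in(\partial_MY,f)$, so $f\o\alpha$ is Morse in $X$ and $f_*([\alpha])$ is defined. It remains to identify $f_*([\alpha])$ with $[\xi]$. For this I use that the convergence $[\alpha_i]\ra[\alpha]$ in the compact-open topology means the geodesic rays $\alpha_i$ converge uniformly on compact sets to $\alpha$ (after adjusting basepoints by a uniformly bounded amount); applying $f$ and then replacing each $f\o\alpha_i$ by a Morse geodesic ray at uniformly bounded Hausdorff distance via Lemma~\ref{nearby ray}(3), uniform convergence on compacta is preserved up to a uniform error, so the $\partial_M^NX_p$--limit of $f_*([\alpha_i])$ must be $f_*([\alpha])$. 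Hence $[\xi]=f_*([\alpha])\in f_*(Z)$, proving $f_*(Z)\cap\partial_M^NX_p$ is closed. Finally, closed and injective implies open: for open $U\cu(\partial_MY,f)$, one has $f_*(U)=f_*((\partial_MY,f))\setminus f_*((\partial_MY,f)\setminus U)$, and checking that the ambient set $f_*((\partial_MY,f))$ together with the complement behaves correctly stratum-wise gives openness of $f_*(U)$ in $\partial_MX$.

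The step I expect to be the main obstacle is the last identification $[\xi]=f_*([\alpha])$: one has to be careful that passing from $f\o\alpha_i$ to a nearby Morse geodesic ray (Lemma~\ref{nearby ray}(3)) is done with constants depending only on $N$ and $\l$, and that the ``uniform convergence on compacta'' of the $\alpha_i$ genuinely survives composition with $f$ and this replacement — i.e.\ that the limit in the metrisable stratum $\partial_M^NX_p$ is forced, not merely a subsequential limit. Handling the basepoint bookkeeping (the $\alpha_i$ and $\alpha$ are based at $q$, their images near but not at $p$, so one invokes the canonical basepoint-change homeomorphism) is routine but must be spelled out to keep all constants uniform.
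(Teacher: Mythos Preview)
Your proposal is correct and follows essentially the same route as the paper: reduce to sequential closedness in a fixed stratum $\partial_M^NX_p$, pull back a uniform Morse gauge to $Y$ via the quasi-isometric embedding, extract a subsequential limit $[\alpha]\in Z$ by Arzel\`a--Ascoli, and identify $f_*([\alpha])=[\xi]$. For the last step you flag as the obstacle, the paper avoids pushing convergence forward through $f$: it instead takes $N$--Morse geodesic representatives $\beta_n$ of $f_*[\alpha_n]$ based at $p=f(q)$, records the uniform bound $d_{\rm Haus}(f\circ\alpha_n,\beta_n)\leq D$ from Lemma~\ref{nearby ray}(2), passes to convergent subsequences on both sides simultaneously ($\alpha_n\to\alpha$, $\beta_n\to\beta$), and then reads off $d_{\rm Haus}(f\circ\alpha,\beta)<\infty$, so $\xi=[\beta]=f_*[\alpha]$ --- this sidesteps the bookkeeping you anticipated.
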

\begin{proof}
Fix a basepoint $q\in Y$ and set $p:=f(q)$. Let $A\cu (\partial_MY,f)$ be a closed subset. We need to show that the intersection $f_*(A)\cap\partial_M^NX_p$ is closed for every Morse gauge $N$. In fact, since $\partial_M^NX_p$ is metrisable, it suffices to show that $f_*(A)\cap\partial_M^NX_p$ is sequentially closed.

Let $\alpha_n\cu Y$ be geodesic rays based at $q$, so that $[\alpha_n]\in A$ and $f_*[\alpha_n]\in\partial_M^NX_p$ for some Morse gauge $N$. Suppose that $f_*[\alpha_n]\ra\xi$ in $\partial_MX$. We need to show that $\xi\in f_*(A)$.

Let $\beta_n\cu X$ be $N$--Morse geodesic rays based at $p$ representing $f_*[\alpha_n]$. By Lemma~\ref{nearby ray}(2), the quasi-geodesic rays $f\o\alpha_n$ are at uniformly finite Hausdorff distance from $\beta_n$, say $\leq D$, and they are uniformly Morse in $X$. Since $f$ is a quasi-isometric embedding, it follows that the $\alpha_n$ are uniformly Morse in $Y$, say $N'$--Morse. 

Now, by the Arzel\`a--Ascoli theorem, we can pass to a subsequence and assume that the $\beta_n$ converge uniformly on compact sets to an $N$--Morse geodesic ray $\beta\cu X$ based at $p$. Similarly, the $\alpha_n$ converge to an $N'$--Morse ray $\alpha\cu Y$ based at $q$. In particular, $[\alpha_n]\ra[\alpha]$ in the topology of $\partial_MY$, hence $[\alpha]\in A$. Since $d_{\rm Haus}(f\o\alpha_n,\beta_n)\leq D$ for every $n$ and $f$ is a quasi-isometric embedding, we have $d_{\rm Haus}(f\o\alpha,\beta)<+\infty$. Hence $\xi=[\beta]=f_*[\alpha]\in f_*(A)$, as required.
\end{proof}

\subsection{Divergence}

It was shown in \cite{ACGH} that Morse quasi-geodesics can equivalently be characterised as quasi-geodesics with completely super-linear divergence. The proof of Theorem~\ref{embedding intro} will require this equivalence to be \emph{effective}, in the sense that the Morse gauge and the divergence function of the quasi-geodesic should determine each other. 

This kind of statement is proved in detail in Cashen's Habilitation thesis \cite[Survey: Corollary~3.5]{Cashen-Hab}, but we also explain here how to deduce it from the proofs of various results in \cite{ACGH}. 

We begin with the definition of divergence.

\begin{defn}
Consider an $L$--quasi-geodesic ray $\g\cu X$ and a parameter $0<\eps<\tfrac{1}{2L}$. The \emph{divergence function} of $\g$ is:
\[\delta_{\g}(r,\eps):=\inf_{s\geq r}\inf\{\text{lengths of paths connecting $\g(s\pm r)$ in $X\setminus \mc{B}(\g(s),\eps r)$}\} \in [0,+\infty].\]
\end{defn}

The proof of \cite[Proposition~5.10]{ACGH} shows the following.

\begin{lem}\label{Morse -> divergence lem}
Let $\g\cu X$ be an $L$--quasi-geodesic ray. Suppose that $\delta_{\g}(r,\eps)\leq Cr$ for some $r,C$ and $\eps<\tfrac{1}{4L}$. Then there exists $s\geq r$ such that $\g(s\pm r)$ are joined by an $L'$--quasi-geodesic avoiding $\mc{B}(\g(s),\eps' r)$, where the constants $L'$ and $\eps'$ depend only on $L,C,\eps$ (and not on $r$).
\end{lem}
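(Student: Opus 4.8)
The plan is to extract a short path from the divergence bound and then tame it into a genuine quasi-geodesic by geodesic surgery, keeping the path away from $\g(s)$ throughout.

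\smallskip
\emph{Step 1 (extract a short path).} Since $\delta_{\g}(r,\eps)\leq Cr$, and the two infima in the definition of $\delta_{\g}$ can be almost realised, I would fix $s\geq r$ together with a path $\rho$ from $\g(s-r)$ to $\g(s+r)$, of length at most $(C+1)r$, whose image lies in $X\setminus\mc{B}(\g(s),\eps r)$. In particular $d(\g(s\pm r),\g(s))\geq\eps r$, and since $\g$ is an $L$--quasi-geodesic, $d(\g(s-r),\g(s+r))$ is comparable to $r$ (degenerate cases with $r$ small are checked directly). Reparametrise $\rho$ by arc length on an interval $[0,\ell]$ with $\ell\leq(C+1)r$.

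\smallskip
\emph{Step 2 (surgery).} I would then repeatedly perform the following move on the current path $\beta$ (always parametrised by arc length): if there are parameters $t_1<t_2$ with $t_2-t_1>\max\{10,\,4\,d(\beta(t_1),\beta(t_2))\}$ \emph{and} $d(\beta(t_1),\beta(t_2))\leq d(\beta(t_1),\g(s))-\tfrac{\eps r}{4}$, replace $\beta|_{[t_1,t_2]}$ by a geodesic from $\beta(t_1)$ to $\beta(t_2)$ and reparametrise by arc length. The first condition forces each move to shorten the path by a definite amount, so the process terminates; call the outcome $\sigma$. The second condition guarantees, by an easy induction, that every inserted geodesic stays at distance $\geq\tfrac{\eps r}{4}$ from $\g(s)$, so $\sigma$ still joins $\g(s\pm r)$, has length $\leq(C+1)r$, and avoids $\mc{B}(\g(s),\tfrac{\eps r}{4})$. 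Set $\eps':=\eps/4$.

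\smallskip
\emph{Step 3 (uniformity).} It remains to check that $\sigma$ is an $L'$--quasi-geodesic with $L'$ depending only on $L,C,\eps$, i.e.\ that $t_2-t_1\leq L'\,d(\sigma(t_1),\sigma(t_2))+L'$ for all $t_1<t_2$; write $x=\sigma(t_1)$, $y=\sigma(t_2)$. Since $\sigma$ admits no further move, either $t_2-t_1\leq\max\{10,4d(x,y)\}$, which already gives the inequality, or $d(x,y)>d(x,\g(s))-\tfrac{\eps r}{4}$. In the latter situation, if $d(x,y)\geq\tfrac{\eps r}{4}$ then $t_2-t_1\leq\ell\leq(C+1)r\leq\tfrac{4(C+1)}{\eps}\,d(x,y)$, again done; and if $d(x,y)<\tfrac{\eps r}{4}$, then $x$ and $y$ both lie within $\tfrac{3\eps r}{4}$ of $\g(s)$, so $\sigma|_{[t_1,t_2]}$ runs close to $\g(s)$. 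This last case is the main obstacle: one has to rule out that $\sigma$ accumulates length while ``winding'' near $\mc{B}(\g(s),\eps r)$, and this is where the hypothesis is used in full strength — $\delta_{\g}(r,\eps)\leq Cr$ bounds the cost of going around $\mc{B}(\g(s),\eps r)$, hence no such sub-arc can be longer than a fixed multiple of $\eps r$, and the inequality follows. This final estimate is exactly the bookkeeping carried out in the proof of \cite[Proposition~5.10]{ACGH} (alternatively \cite[Survey:~Corollary~3.5]{Cashen-Hab}); everything else is routine, and one may, if desired, replace $\sigma$ by a nearby geodesic at the end via Lemma~\ref{nearby ray}(3).
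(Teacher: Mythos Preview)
The paper does not actually prove this lemma: its entire proof is the sentence ``The proof of \cite[Proposition~5.10]{ACGH} shows the following.'' Your proposal goes further and sketches the taming construction that underlies that proposition, so in that sense you have done more than the paper. Steps~1 and~2 are correct and are indeed the shape of the ACGH argument.

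The problem is the final sub-case of Step~3. Your heuristic --- that ``$\delta_{\g}(r,\eps)\leq Cr$ bounds the cost of going around $\mc{B}(\g(s),\eps r)$, hence no such sub-arc can be longer than a fixed multiple of $\eps r$'' --- is not right. The quantity $\delta_{\g}(r,\eps)$ is an \emph{infimum} over paths from $\g(s-r)$ to $\g(s+r)$ avoiding the ball; its only content is the existence of a short such path, and you have already extracted that in Step~1 (your $\rho$). It says nothing whatsoever about the length of sub-arcs of your tamed path $\sigma$ whose endpoints happen to lie near $\g(s)$. In particular, nothing you have written rules out $\sigma$ spending arc-length of order $r$ in the annulus between $\mc{B}(\g(s),\tfrac{\eps r}{4})$ and $\mc{B}(\g(s),\tfrac{\eps r}{2})$ while its endpoints there are $o(r)$ apart, and in that scenario $\sigma$ would fail to be an $L'$--quasi-geodesic for any $L'$ independent of $r$. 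Since you immediately defer to \cite[Proposition~5.10]{ACGH} for the ``bookkeeping'' anyway --- which is exactly what the paper does --- your proposal is formally no less complete than the paper's proof; but the heuristic sentence is misleading and should simply be deleted, and the citation left to carry the weight. If you want a genuinely self-contained argument here, the constrained surgery in Step~2 is not quite the right taming: one has to arrange the procedure so that no residual ``winding'' case survives, and this is where the details in \cite{ACGH} matter.
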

% in ACGH, we have $\eps=1/4L$, but we can always make $L$ giant to have the $\eps$ we like

\begin{cor}\label{Morse -> divergence cor}
Given constants $L$, $\eps<\tfrac{1}{4L}$, and a Morse gauge $N$, there exists a weakly increasing, diverging function $f$ such that the following holds. For every $N$--Morse $L$--quasi-geodesic ray $\g\cu X$, we have $\delta_{\g}(r,\eps)\geq rf(r)$ for all $r\geq 0$.
\end{cor}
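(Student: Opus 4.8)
The statement is essentially the contrapositive of Lemma~\ref{Morse -> divergence lem}, packaged uniformly over all rays with a fixed Morse gauge. The plan is to argue by contradiction at the level of a single threshold, and then assemble the function $f$ by a diagonal/quantifier-juggling argument. First I would fix $L$, $\eps < \tfrac{1}{4L}$, and the Morse gauge $N$. For each constant $C > 0$, I claim there is a radius $R(C)$ such that \emph{every} $N$--Morse $L$--quasi-geodesic ray $\g$ satisfies $\delta_\g(r,\eps) > Cr$ for all $r \geq R(C)$. Granting this claim, I would then define $f$ by setting, say, $f(r) := \sup\{C \in \N : R(C) \leq r\}$ (with $f(r) = 0$ if no such $C$ exists); this is weakly increasing by construction, and it diverges because each fixed $C$ is eventually dominated once $r \geq R(C)$. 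By the claim, $\delta_\g(r,\eps) > f(r)\cdot r$ for all $r$ large enough depending only on the data, and after possibly decreasing $f$ on an initial segment to $0$ we get the inequality for all $r \geq 0$; here one uses that $\delta_\g \geq 0$ always.

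The heart of the matter is the uniform claim, and this is where Lemma~\ref{Morse -> divergence lem} enters. Suppose the claim fails for some $C$: then there exist $N$--Morse $L$--quasi-geodesic rays $\g_n$ and radii $r_n \to \infty$ (they must go to infinity, since for bounded $r$ the divergence of a quasi-geodesic is bounded below by a positive quantity depending only on $L$ and $\eps$ — a short separate observation) with $\delta_{\g_n}(r_n,\eps) \leq C r_n$. Applying Lemma~\ref{Morse -> divergence lem} to each $\g_n$ with $r = r_n$ and this fixed $C$, we obtain $s_n \geq r_n$ and an $L'$--quasi-geodesic $\sigma_n$ joining $\g_n(s_n \pm r_n)$ while avoiding $\mc{B}(\g_n(s_n),\eps' r_n)$, where $L',\eps'$ depend only on $L,C,\eps$ — crucially \emph{not} on $n$ or $r_n$. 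But $\g_n$ is $N$--Morse, so the restriction of $\g_n$ to $[s_n - r_n, s_n + r_n]$ is $N''$--Morse (Lemma~\ref{nearby ray}(4), with $N''$ depending only on $N$ and $L$), hence the $L'$--quasi-geodesic $\sigma_n$ with the same endpoints must lie in the $N''(L')$--neighbourhood of that sub-segment. Since $\g_n(s_n)$ is the midpoint of that geodesic-like segment, $\sigma_n$ is forced to pass within $N''(L')$ of $\g_n(s_n)$. This contradicts avoidance of $\mc{B}(\g_n(s_n),\eps' r_n)$ as soon as $\eps' r_n > N''(L')$, which holds for $n$ large since $r_n \to \infty$ and $\eps', N''(L')$ are fixed. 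This contradiction proves the claim.

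The main obstacle, and the only place requiring genuine care, is bookkeeping the \emph{order of quantifiers}: one must isolate $C$ first, extract from Lemma~\ref{Morse -> divergence lem} constants $L',\eps'$ independent of the ray and of $r$, and only then let $r_n \to \infty$ defeat them against the fixed Morse constant $N''(L')$. A secondary technical point is the small-$r$ regime: for $r$ below some threshold depending on $L,\eps$ the infimum defining $\delta_\g$ could in principle be $0$ or very small (e.g.\ if $\g(s\pm r)$ are close together), so the final function $f$ must be allowed to vanish on an initial interval — this is harmless since the corollary only asserts $\delta_\g(r,\eps) \geq r f(r)$ and $r f(r) = 0$ is trivially a lower bound there. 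No appeal to properness or Arzel\`a--Ascoli is needed; unlike Lemma~\ref{open map}, the uniformity here is extracted directly from the effective statement of Lemma~\ref{Morse -> divergence lem} rather than from a compactness argument.
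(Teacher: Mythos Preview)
Your proposal is correct and takes essentially the same approach as the paper. The paper is slightly more direct: rather than arguing by contradiction with sequences $r_n\to\infty$ and assembling $f$ from thresholds $R(C)$, it defines $g(\overline r):=\inf_\gamma \delta_\gamma(\overline r,\eps)/\overline r$, applies Lemma~\ref{Morse -> divergence lem} to a near-minimiser to show that $g$ diverges (via the same bound $\eps'\overline r\leq N(L')$ you obtain), and then sets $f(x):=\inf_{t\geq x} g(t)$; your $R(C)$ is essentially the inverse of this $g$, and the detour through sub-segments via Lemma~\ref{nearby ray}(4) is unnecessary since the Morse condition on the whole ray already controls $\sigma_n$.
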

\begin{proof}
Fix $\overline r\geq 0$. Define $g(\overline r)$ as the infimum of the ratio $\delta_{\g}(\overline r,\eps)/\overline r$ as $\g$ varies among all $N$--Morse $L$--quasi-geodesic rays in $X$. Let $\overline\g$ be one such quasi-geodesic satisfying $\delta_{\overline\g}(\overline r,\eps)/\overline r \leq 2g(\overline r)$. 

Lemma~\ref{Morse -> divergence lem} gives $s\geq\overline r$ and an $L'$--quasi-geodesic joining $\overline\g(s\pm \overline r)$ avoiding the ball $\mc{B}(\overline \g(s),\eps' \overline r)$, where $\eps'$ and $L'$ depend only on $L$, $\eps$ and the value $g(\overline r)$. Since $\overline\g$ is $N$--Morse, we must have $\eps' \overline r\leq N(L')$. This implies that $g(r)$ diverges as $r\ra +\infty$.

Now, let $f$ be the largest weakly increasing function with $f\leq g$, namely:
\[ f(x)=\inf_{t\geq x} g(t).\]
Since $g$ diverges, so does $f$. Finally, if $\g\cu X$ is an $N$--Morse $L$--quasi-geodesic ray, it is clear that we have $\delta_{\g}(r,\eps)\geq rg(r)\geq rf(r)$ for all $r\geq 0$.
\end{proof}

The above corollary shows that the divergence function of a Morse quasi-geodesic ray can be bounded uniformly (from below) in terms of the Morse gauge. In order to reverse this kind of result, we need to speak of \emph{contracting} geodesics.

\begin{defn}
Let $\g\cu X$ be a quasi-geodesic with closed image. 
\begin{enumerate}
\item The \emph{nearest-point projection} $\pi_{\g}\colon X\ra 2^{\g}$ is defined by $\pi_{\g}(x)=\{p\in\g \mid d(x,p)=d(x,\g)\}$. Since $X$ is proper and $\g$ is closed, the subset $\pi_{\g}(x)\cu\g$ is always non-empty.
\item If $\rho$ is a sublinear, weakly increasing, non-negative function, we say that $\g$ is \emph{$\rho$--contracting} if, for all $x,y\in X$ with $d(x,y)\leq d(x,\g)$, we have $\diam(\pi_{\g}(x)\cup\pi_{\g}(y))\leq \rho(d(x,\g))$.
\end{enumerate}
\end{defn}

From the proof of \cite[Proposition~5.5]{ACGH}, we obtain:

\begin{lem}\label{divergence -> contracting lem}
Let $\g\cu X$ be an $L$--quasi-geodesic ray with closed image. Consider points $x,y\in X$ with $d(x,y)\leq d(x,\g)$ and projections $x'\in\pi_{\g}(x)$, $y'\in\pi_{\g}(y)$. Then, if $d(x',y')\geq 5L^3$, we have:
\[ 4d(x,\g)\geq \delta_{\g}\big(r,\tfrac{1}{16L}\big), \]
for some $r\in [\tfrac{1}{4L}\cdot d(x',y'),L\cdot d(x',y')]$.
\end{lem}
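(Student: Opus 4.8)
The plan is to read off the asserted inequality from a single explicit short path, essentially reorganising the argument in the proof of \cite[Proposition~5.5]{ACGH}. First I would fix parameters $a<c$ with $\g(a)=x'$ and $\g(c)=y'$; such $a,c$ exist since $d(x',y')\ge 5L^3>0$ forces $x'\neq y'$, and we may take $a\ge 0$ because $\g$ is a ray. Set $r:=\tfrac12(c-a)$ and $s:=\tfrac12(a+c)$, so that $s\ge r$, $\g(s-r)=x'$ and $\g(s+r)=y'$. The quasi-geodesic inequalities $\tfrac1L(c-a)-L\le d(x',y')\le L(c-a)+L$, together with $d(x',y')\ge 5L^3\ge 2L$, then give $r\in\bigl[\tfrac1{4L}d(x',y'),\,L\,d(x',y')\bigr]$, so it suffices to produce a path of length $\le 4d(x,\g)$ joining $\g(s-r)$ to $\g(s+r)$ and avoiding $\mc{B}\bigl(\g(s),\tfrac r{16L}\bigr)$.

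The candidate is the concatenation $\sigma$ of the geodesic segments $[x',x]$, $[x,y]$ and $[y,y']$. Its length is $d(x,\g)+d(x,y)+d(y,\g)$, and since $d(x,y)\le d(x,\g)$ by hypothesis while $d(y,\g)=d(y,y')\le d(y,x')\le d(y,x)+d(x,x')\le 2d(x,\g)$, this is $\le 4d(x,\g)$. So everything reduces to checking that $\sigma$ misses $\mc{B}(\g(s),\eps r)$ with $\eps=\tfrac1{16L}$. For the two outer legs this is routine: for $z$ on $[x',x]$, the fact that $x'\in\pi_{\g}(x)$ forces $x'\in\pi_{\g}(z)$, so $d(z,\g)=d(z,x')$; if $d(z,x')>\eps r$ then $d(z,\g(s))\ge d(z,\g)>\eps r$, and if $d(z,x')\le\eps r$ then $d(z,\g(s))\ge d(\g(a),\g(s))-d(z,x')\ge\tfrac rL-L-\eps r>\eps r$, using $|a-s|=r$ and $r\ge\tfrac1{4L}\cdot 5L^3=\tfrac54L^2$. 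The leg $[y,y']$ is symmetric (with $d(\g(c),\g(s))\ge\tfrac rL-L$).

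The only delicate point, and the one I expect to be the main obstacle, is the middle segment $[x,y]$, which I would treat by contradiction. Suppose $z\in[x,y]$ has $d(z,\g(s))\le\eps r$. Then $d(z,\g)\le\eps r$, so $d(x,y)\le d(x,\g)\le d(x,z)+d(z,\g)\le d(x,z)+\eps r$, and because $z$ lies on the geodesic $[x,y]$ this forces $d(z,y)\le\eps r$; hence $d(y,\g)=d(y,y')\le d(y,z)+d(z,\g)\le 2\eps r$, and therefore $d(\g(c),\g(s))\le d(y',y)+d(y,z)+d(z,\g(s))\le 4\eps r=\tfrac r{4L}$. On the other hand $|c-s|=r$ gives $d(\g(c),\g(s))\ge\tfrac rL-L$, so $\tfrac rL-L\le\tfrac r{4L}$, i.e.\ $r\le\tfrac43L^2$; feeding this into $d(x',y')\le L(c-a)+L=2Lr+L$ yields $d(x',y')\le\tfrac83L^3+L<5L^3$, contradicting the hypothesis. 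Thus $\sigma$ avoids $\mc{B}(\g(s),\eps r)$ and $\delta_{\g}(r,\eps)\le\mathrm{length}(\sigma)\le 4d(x,\g)$. The subtlety to get right is precisely the constant bookkeeping in this last step: one needs the threshold on $d(x',y')$ (here $5L^3$, rather than some larger multiple of $L^3$) to be exactly strong enough to close the contradiction, and to have chosen $\eps=\tfrac1{16L}$ and $r\approx\tfrac12(c-a)$ compatibly with both this estimate and the leg estimates above.
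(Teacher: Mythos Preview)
Your argument is correct and is exactly the content one extracts from the proof of \cite[Proposition~5.5]{ACGH}, which is all the paper itself does here (it gives no independent argument, only the citation). One cosmetic point: you fix $a<c$ with $\g(a)=x'$ and $\g(c)=y'$ without justifying this ordering, but since $|a-s|=|c-s|=r$ your middle-leg contradiction goes through verbatim whichever of $x',y'$ sits at $\g(a)$, so this is harmless.
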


Finally, the following is \cite[Proposition~4.1]{ACGH}.

\begin{prop}\label{contracting -> Morse}
Let $\g\cu X$ be a quasi-geodesic with closed image. If $\g$ is $\rho$--contracting, then it is $N$--Morse with $N$ depending only on $\rho$.
\end{prop}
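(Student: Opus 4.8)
The plan is to reprove, in effective form, the classical implication ``sublinear contraction $\Ra$ Morse.'' By a routine interpolation we may assume throughout that the quasi-geodesics involved are continuous, and by a harmless adjustment of endpoints we may assume they begin and end \emph{on} $\g$; so it suffices to exhibit, for each $C$, a constant $N(C)$ depending only on $\rho$ such that every continuous $C$--quasi-geodesic $\eta\colon[0,T]\ra X$ with $\eta(0),\eta(T)\in\g$ satisfies $\eta\cu\mc{N}(\g,N(C))$. Before starting I would replace $\rho$ by its least concave majorant $\bar\rho$: this is still non-negative, weakly increasing and sublinear, $\g$ remains $\bar\rho$--contracting, and crucially $s\mapsto\bar\rho(s)/s$ becomes weakly decreasing on $(0,+\infty)$.

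The technical heart is a projection estimate: there are an absolute constant $c_0$ and a constant $\kappa=\kappa(C,\rho)$ such that, whenever $\sigma$ is a $C$--quasi-geodesic whose image stays at distance $\geq R$ from $\g$ with $R\geq 2C$, the nearest-point projections to $\g$ of the two endpoints of $\sigma$ lie at distance at most $c_0\,\tfrac{C\bar\rho(R)}{R}\cdot\mathrm{length}(\sigma)+\kappa$. I would prove this by walking along $\sigma$ in consecutive steps whose parameter-length is comparable to the current distance to $\g$ --- this is precisely what makes each step admissible for the $\bar\rho$--contracting inequality, since that distance never drops below $R\geq 2C$ --- and then summing: each step moves the projection by at most $\bar\rho$ of the local distance to $\g$, which by monotonicity of $\bar\rho(s)/s$ is at most $\tfrac{\bar\rho(R)}{R}$ times that distance, while the distances to $\g$ at the sample points sum to at most a constant multiple of $C\cdot\mathrm{length}(\sigma)$ (short $\sigma$ and the final partial step being absorbed into $\kappa$). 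The decisive feature is that $\tfrac{\bar\rho(R)}{R}\to 0$ as $R\to\infty$, by sublinearity; I therefore fix once and for all a threshold $R_0=R_0(C,\rho)\geq 2C$ with $c_0\,C^2\,\tfrac{\bar\rho(R_0)}{R_0}\leq\tfrac14$.

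Now set $D=\max_t d(\eta(t),\g)=d(\eta(t_0),\g)$ and aim to bound $D$; assuming $D>R_0$, let $[t_a,t_b]\ni t_0$ be the maximal interval on which $d(\eta(\cdot),\g)\geq R_0$, so that $d(\eta(t_a),\g)=d(\eta(t_b),\g)=R_0$ by continuity (the endpoints of $\eta$ lying on $\g$). Write $a=\eta(t_a)$, $b=\eta(t_b)$. Applying the projection estimate to $\sigma=\eta|_{[t_a,t_b]}$, and combining it with $d(a,b)\leq 2R_0+d_{\rm Haus}\big(\pi_\g(a),\pi_\g(b)\big)$ (since $d(a,\g)=d(b,\g)=R_0$) and the upper quasi-geodesic bound $\mathrm{length}(\sigma)\leq C\,d(a,b)+C$, one solves the resulting linear inequality --- solvable because $c_0C^2\tfrac{\bar\rho(R_0)}{R_0}<1$ --- to obtain $d(a,b)\leq\Sigma_0$ for a constant $\Sigma_0$ depending only on $C$ and $\rho$. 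On the other hand $\sigma$ passes through $\eta(t_0)$, which is at distance $\geq D-R_0$ from both $a$ and $b$; feeding this into the quasi-geodesic inequalities forces $d(a,b)\geq\tfrac{2(D-R_0)}{C^2}-\tfrac{2}{C}-C$. Comparing the two bounds yields $D\leq N_0(C,\rho)$, and undoing the initial reductions produces a Morse gauge $N(C)$ depending only on $\rho$.

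I expect the main obstacle to be the bookkeeping inside the projection estimate: one must choose the step sizes so that the $\bar\rho$--contracting hypothesis applies at \emph{every} step while keeping the telescoping sum bounded by $\tfrac{\bar\rho(R)}{R}\cdot\mathrm{length}(\sigma)$ rather than by the useless bound $\bar\rho(D)\cdot\mathrm{length}(\sigma)$, and one must verify that the threshold $R_0$ and every constant appearing along the way depend on $\rho$ and $C$ alone --- never on the quasi-geodesic constant of $\g$ itself. A clean, carefully stated effective version of exactly this is \cite[Survey: Corollary~3.5]{Cashen-Hab}; alternatively, all of the above can be read off from the proof of \cite[Proposition~4.1]{ACGH}.
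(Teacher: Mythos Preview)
Your sketch is correct and is precisely the argument of \cite[Proposition~4.1]{ACGH}, which you yourself cite at the end; the paper does not give its own proof of this proposition but simply quotes that reference. So there is nothing to compare: you have supplied a faithful outline of the cited proof, while the paper is content with the citation.
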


We will use the combination of the previous results in the following form.

\begin{lem}\label{Morse vs divergence}
Let $\alpha\cu X$ and $\beta\cu Y$ be $L$--quasi-geodesic rays. Suppose that there exist constants $K\geq 0$ and $0<\eps_1,\eps_2\leq\tfrac{1}{16L}$ such that, for all $r>K$, we have:
\[\delta_{\alpha}(r,\eps_1)\leq K\cdot\delta_{\beta}(r,\eps_2).\]  
If $\alpha$ is $N$--Morse in $X$, then $\beta$ is $N'$--Morse in $Y$, with $N'$ depending only on $N,L,K,\eps_1$.
\end{lem}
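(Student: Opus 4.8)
The plan is to chain together the effective results established just above: Corollary~\ref{Morse -> divergence cor} turns the Morse gauge of $\alpha$ into a lower bound on $\delta_\alpha$, the divergence hypothesis transfers this to a lower bound on $\delta_\beta$, Lemma~\ref{divergence -> contracting lem} converts that into a contraction estimate for $\beta$, and finally Proposition~\ref{contracting -> Morse} upgrades contraction back to a Morse gauge. The only real subtlety is bookkeeping the constants and the ranges of radii so that everything matches up; I expect that to be the main obstacle, rather than any conceptual difficulty.

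\begin{proof}[Proof plan]
Since $\alpha$ is $N$--Morse and $\eps_1\leq\tfrac1{16L}<\tfrac1{4L}$, Corollary~\ref{Morse -> divergence cor} (applied with the constant $\eps_1$) yields a weakly increasing, diverging function $f=f_{N,L,\eps_1}$ with $\delta_\alpha(r,\eps_1)\geq rf(r)$ for all $r\geq 0$. Combining this with the hypothesis, for every $r>K$ we get
\[ \delta_\beta(r,\eps_2)\;\geq\;\frac1K\,\delta_\alpha(r,\eps_1)\;\geq\;\frac{r\,f(r)}{K}. \]
So $\beta$ has divergence that is super-linear, with an explicit rate depending only on $N,L,K,\eps_1$. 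It remains to run this backwards to a Morse gauge.

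Next I would show $\beta$ is $\rho$--contracting for a sublinear $\rho$ depending only on $N,L,K,\eps_1$. Fix $x,y\in Y$ with $d(x,y)\leq d(x,\beta)$, pick projections $x'\in\pi_\beta(x)$, $y'\in\pi_\beta(y)$, and write $R:=d(x,\beta)$ and $D:=d(x',y')$. If $D<5L^3$ there is nothing to prove (set $\rho\geq 5L^3$ for small arguments). If $D\geq 5L^3$, Lemma~\ref{divergence -> contracting lem} gives $r\in\bigl[\tfrac1{4L}D,\,LD\bigr]$ with $4R\geq\delta_\beta\bigl(r,\tfrac1{16L}\bigr)$. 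Here one must be slightly careful: Lemma~\ref{divergence -> contracting lem} produces divergence at parameter $\tfrac1{16L}$, whereas our lower bound above is at parameter $\eps_2\leq\tfrac1{16L}$; since $\delta_\beta(r,\cdot)$ is monotone non-increasing in the parameter (a smaller forbidden ball makes connecting paths easier), we have $\delta_\beta\bigl(r,\tfrac1{16L}\bigr)\geq\delta_\beta(r,\eps_2)$, so the lower bound still applies. Provided $r>K$ — which holds once $D>4LK$, a condition we can fold into the small-argument part of $\rho$ — we obtain $4R\geq rf(r)/K\geq\tfrac1{4LK}D\,f\bigl(\tfrac1{4L}D\bigr)$, hence $D\leq \tfrac{16LK\,R}{f(D/(4L))}$. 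Since $f$ diverges, solving this inequality for $D$ in terms of $R$ produces a sublinear bound $D\leq\rho(R)$ with $\rho$ depending only on $N,L,K,\eps_1$; this is the desired contraction estimate (the case distinctions on $D$ only affect the values of $\rho$ on a bounded initial interval, which does not destroy sublinearity).

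Finally, $\beta$ is an $L$--quasi-geodesic ray, so in particular it is a quasi-geodesic; replacing it by a sub-ray if necessary we may assume it has closed image (or simply note that the contraction argument only used projections, and closedness of the image is automatic for a proper space up to bounded error). Proposition~\ref{contracting -> Morse} then gives that $\beta$ is $N'$--Morse with $N'$ depending only on $\rho$, hence only on $N,L,K,\eps_1$, as claimed. The one point demanding genuine care throughout is matching the divergence parameters ($\eps_1$ into Corollary~\ref{Morse -> divergence cor}, $\tfrac1{16L}$ out of Lemma~\ref{divergence -> contracting lem}, $\eps_2$ in the hypothesis) and ensuring the radius $r$ supplied by Lemma~\ref{divergence -> contracting lem} exceeds the threshold $K$; both are handled by the monotonicity of $\delta_\beta(r,\cdot)$ in its second argument and by absorbing a bounded set of radii into $\rho$.
\end{proof}
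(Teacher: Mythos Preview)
Your proposal is correct and follows essentially the same route as the paper's proof: apply Corollary~\ref{Morse -> divergence cor} to get a super-linear lower bound on $\delta_\alpha$, transfer it to $\delta_\beta$ via the hypothesis and the monotonicity $\delta_\beta(r,\tfrac{1}{16L})\geq\delta_\beta(r,\eps_2)$, feed this into Lemma~\ref{divergence -> contracting lem} to deduce a sublinear contraction function $\rho$ for $\beta$, and conclude with Proposition~\ref{contracting -> Morse}. The paper makes the construction of $\rho$ explicit (via $\rho'(x)=\sup\{r\mid \eta(r)\leq x\}$ and $\rho(x)=\sup_{t\leq x}\rho'(t)$) where you leave it implicit, and note that your verbal description of the monotonicity in $\eps$ is backwards even though the inequality you use is the right one.
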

\begin{proof}
We will show that $\beta$ is $\rho$--contracting, with $\rho$ depending only on $N,L,K,\eps_1$. The fact that $\beta$ is $N'$--Morse then follows from Proposition~\ref{contracting -> Morse}.

By Corollary~\ref{Morse -> divergence cor}, there exists a weakly increasing, diverging function $f$ such that $\delta_{\alpha}(r,\eps_1)\geq rf(r)$ for all $r\geq 0$. Recall that $f$ depends only on $L,\eps_1,N$. Setting $\eta(r):=rf(r)/K$, we have:
\[\delta_{\beta}(r,\tfrac{1}{16L})\geq\delta_{\beta}(r,\eps_2)\geq \tfrac{1}{K}\cdot\delta_{\alpha}(r,\eps_1) \geq\eta(r),\] 
for all $r>K$. Note that $\eta$ is strictly increasing and $\eta(r)/r\ra+\infty$. Define:
\begin{align*}
\rho'(x)&:=\sup\{ r \mid \eta(r)\leq x\}, & \rho(x)&:=\sup_{t\leq x}\rho'(t).
\end{align*}
Observe that $\rho'(x)/x\ra 0$ for $x\ra+\infty$. In addition, $\rho$ is weakly increasing and $\rho\geq\rho'$. Observing that $\rho'$ is bounded on bounded sets, the fact that $\rho'$ is sublinear implies that $\rho$ is sublinear as well.

Now, consider points $x,y\in Y$ with $d(x,y)\leq d(x,\beta)$ and projections $x'\in\pi_{\beta}(x)$, $y'\in\pi_{\beta}(y)$. Suppose that $d(x',y')> \max\{5L^3,4LK\}$. Recalling that $\eta$ is monotone, Lemma~\ref{divergence -> contracting lem} shows that:
\[4d(x,\beta)\geq\eta(\tfrac{1}{4L}\cdot d(x',y')).\]
It follows that:
\[d(x',y')\leq 4L\cdot\rho'(4d(x,\beta))\leq 4L\cdot\rho(4d(x,\beta)).\]
This shows that $\beta$ is $\rho$--contracting, where $\rho$ depends only on $\eta$, hence only on $L,\eps_1,N,K$.
\end{proof}

\subsection{Relatively wide subgroups.}

Relatively wide subgroups were defined in the introduction. This property will be required in the proof of Theorem~\ref{embedding intro} in the form of part~(3) of the next lemma.

\begin{lem}\label{rel as cut pts lem}
Let $H\leq G$ be finitely generated groups, with $H$ undistorted in $G$. Let $\L$ and $\G$ be Cayley graphs for $H$ and $G$, respectively. Let $i\colon\L\ra\G$ be a quasi-isometric embedding corresponding to the inclusion $H\hookrightarrow G$. Then the following properties are equivalent.
\begin{enumerate}
\item The subgroup $H$ is relatively wide in $G$.

\smallskip
\item For all $C\geq 1$, there exists $K=K(C)\geq 1$ such that the following holds. Let $\alpha\colon [-r,r]\ra\G$ be a $C$--quasi-geodesic with $\alpha(\pm r)\in H\cu\G^{(0)}$. If $r>K$, then $\alpha(\pm r)$ are joined by an edge path $\g\cu\G$ that is disjoint from the ball $\mc{B}_{\G}(\alpha(0),\tfrac{r}{K})$ and has length $|\g|\leq K\cdot r$.

\smallskip
\item There exists $K_0\geq 1$ such that the following holds. Let $\beta\colon[-r,r]\ra\L$ be a geodesic. If $r>K_0$, then $i\o\beta(\pm r)\in\G^{(0)}$ are joined by an edge path $\g\cu\G$ that is disjoint from the ball $\mc{B}_{\G}(i\o\beta(0),\tfrac{r}{K_0})$ and has length $|\g|\leq K_0\cdot r$.
\end{enumerate}
\end{lem}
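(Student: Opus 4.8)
The plan is to prove the cycle of implications $(3)\Rightarrow(2)\Rightarrow(1)\Rightarrow(3)$, using the divergence machinery of the preceding subsection to pass between ``relatively wide'' (a statement about cut points of asymptotic cones), ``linear divergence of geodesics of $H$ viewed in $G$'' (the statement phrased in (3), which is the only form actually needed later), and its quasi-geodesic strengthening (2).

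For $(3)\Rightarrow(2)$: given $\alpha\colon[-r,r]\to\G$ a $C$--quasi-geodesic with $\alpha(\pm r)\in H$, apply the quasi-geodesic stability of $i(\L)$ in $\G$ (it is undistorted, and in fact $i(\L)$ is quasi-convex up to a uniform constant depending only on the quasi-isometric embedding). Replace $\alpha(\pm r)$ by nearby points $i\o\beta(\pm r')$ on a geodesic $\beta$ of $\L$ whose $i$--image fellow-travels $\alpha$, with $r'$ comparable to $r$ up to multiplicative constants depending only on $C$ and $i$; the ball $\mc{B}_\G(\alpha(0),\tfrac{r}{K})$ is then contained in $\mc{B}_\G(i\o\beta(0),\tfrac{r'}{K_0})$ after adjusting $K$. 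Now invoke (3) to get the required avoiding path of linear length, and splice on the two bounded-length segments connecting $\alpha(\pm r)$ to $i\o\beta(\pm r')$. The only point requiring care is the comparison of the centres $\alpha(0)$ and $i\o\beta(0)$ and of the radii, which is routine fellow-traveling bookkeeping.

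For $(2)\Rightarrow(1)$: this is the standard translation between linear divergence of a family of quasi-geodesics and absence of cut points separating the corresponding limit points in asymptotic cones. Fix an asymptotic cone $G_\om$ with two points $x,y\in H_\om$; they are limits of sequences in $H$ joined by $\G$--geodesics $\alpha_n$ of length $\asymp$ (the scaling sequence), which are $1$--quasi-geodesics in $\G$ with endpoints in $H$. A cut point $c$ of $G_\om$ separating $x$ from $y$ would, by the usual argument (e.g.\ \cite{Drutu-Sapir-Top}), force the divergence of the $\alpha_n$ to be super-linear, contradicting (2) applied with $C=1$. Here one should be mildly careful that (2) is stated for quasi-geodesics with \emph{both} endpoints in $H$ and with the waypoint $\alpha(0)$ as the centre of the excised ball, which is exactly the configuration produced by two points of $H_\om$, so no genuine obstacle arises.

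For $(1)\Rightarrow(3)$: this is the reverse direction, and I expect it to be the main obstacle, since it is where one must manufacture, from the mere non-separation hypothesis on asymptotic cones, a \emph{uniform} linear bound $K_0$ valid for \emph{all} geodesics $\beta$ of $\L$ and all $r>K_0$. Suppose (3) fails: then there are geodesics $\beta_n\colon[-r_n,r_n]\to\L$ with $r_n\to\infty$ such that every edge path in $\G$ joining $i\o\beta_n(\pm r_n)$ and avoiding $\mc{B}_\G(i\o\beta_n(0),\tfrac{r_n}{n})$ has length $>n\cdot r_n$. Form the asymptotic cone $G_\om$ with basepoints $i\o\beta_n(0)$ and scaling sequence $r_n$: the points $x:=\lim i\o\beta_n(-r_n)$ and $y:=\lim i\o\beta_n(r_n)$ lie in $H_\om$ (since $H$ is undistorted, $\L$ embeds and $\beta_n$ limits to a bi-infinite geodesic of $H_\om$ through these two points), and the divergence blow-up forces the point $\lim i\o\beta_n(0)$ to be a cut point of $G_\om$ separating $x$ from $y$ — again by the argument of \cite{Drutu-Sapir-Top} relating linear-divergence failure to cut points. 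This contradicts relative wideness. The delicate points are: (i) extracting the cut point from the divergence failure requires the ``tree-graded'' description of asymptotic cones and a small argument that the avoided ball of radius $r_n/n$ limits to a genuine point (so one should check $r_n/n\cdot\tfrac{1}{r_n}\to 0$, which holds); and (ii) ensuring $x\neq y$, which follows because $\beta_n$ are geodesics of length $2r_n$ in the undistorted $\L$, so $d_{G_\om}(x,y)>0$. Once the contradiction is reached, (3) holds, closing the cycle.
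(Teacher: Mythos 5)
Your cycle $(3)\Rightarrow(2)\Rightarrow(1)\Rightarrow(3)$ reverses the paper's route ($(2)\Rightarrow(3)$ trivially, plus $(1)\Rightarrow(2)$ and $(3)\Rightarrow(1)$ via asymptotic cones), and your $(1)\Rightarrow(3)$ is essentially sound: it is the paper's discretisation argument (a path in the cone avoiding a neighbourhood of the putative cut point pulls back to linear-length avoiding paths in $\G$) read in contrapositive. The genuine gap is in $(3)\Rightarrow(2)$. You invoke ``quasi-geodesic stability'' of $i(\L)$ and a geodesic $\beta$ of $\L$ whose $i$--image fellow-travels $\alpha$; but undistorted subgroups are not quasi-convex, and even quasi-convexity would not control arbitrary quasi-geodesics of a non-hyperbolic $\G$. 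Concretely, for $H=\langle a\rangle\leq\Z^2=\langle a,b\rangle=G$ and $\alpha$ the uniform quasi-geodesic $b^na^{2n}b^{-n}$ from $1$ to $a^{2n}$, the midpoint $\alpha(0)=b^na^n$ is at distance $n\asymp r$ from $H$, while the unique $\L$--geodesic with the same endpoints is $a^{2n}$, at Hausdorff distance $n$ from $\alpha$; the asserted containment $\mc{B}_{\G}(\alpha(0),\tfrac{r}{K})\cu\mc{B}_{\G}(i\o\beta(0),\tfrac{r'}{K_0})$ fails. This is precisely why $(2)$ is the a priori stronger condition. The implication is true, but must be obtained either by routing through $(1)$ (as the paper does, running a compactness argument in a cone built from a sequence of counterexamples to $(2)$), or by a direct case analysis absent from your sketch: if $d_{\G}(\alpha(0),H)$ is large compared with $\tfrac{r}{K}$, an efficient path in a neighbourhood of $H$ already avoids the ball; if it is small, one must find a point of $i\o\beta$ inside (an enlargement of) the ball and only then apply $(3)$ recentred at that point. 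As written, your cycle is broken and nothing implies $(2)$.

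A secondary issue is in $(2)\Rightarrow(1)$: condition $(2)$ only excises balls centred at the \emph{midpoint} of a quasi-geodesic with \emph{both endpoints in $H$}, whereas a cut point $c$ separating $x,y\in H_{\om}$ may sit anywhere. One must first note that $c\in H_{\om}$ (it lies on a path joining $x$ to $y$ inside the path-connected set $H_{\om}$), then re-choose endpoints equidistant from $c$ and use that separation forces $d(x',y')=d(x',c)+d(c,y')$, so that concatenating geodesics through approximants $c_n\in H$ yields a uniform quasi-geodesic with $c_n$ as midpoint. Your remark that ``no genuine obstacle arises'' skips exactly this recentering, which is where the hypothesis that the separated points lie in $H_{\om}$ is actually used.
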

\begin{proof}
Since $H$ is undistorted, the implication (2)$\Ra$(3) is clear. We show that (3)$\Ra$(1) and (1)$\Ra$(2).

\smallskip
{\bf (1)$\Ra$(2).} Suppose for the sake of contradiction that (2) fails for some constant $C$. Then there exists a sequence of $C$--quasi-geodesics $\alpha\colon [-r_n,r_n]\ra\G$ with $r_n\ra+\infty$, such that every path joining $\alpha_n(\pm r_n)\in H$ in $\G\setminus \mc{B}_{\G}(\alpha_n(0),\tfrac{r_n}{n})$ has length $>nr_n$.

Fix a non-principal ultrafilter $\om$. Let $\G_{\om}$ be the asymptotic cone of $\G$ given by basepoints $\alpha_n(0)$ and scaling factors $\tfrac{1}{r_n}$.  Let $x_-,x_0,x_+\in \G_{\om}$ be the points determined by the sequences $\alpha_n(-r_n),\alpha_n(0),\alpha_n(r_n)$, respectively. The $\alpha_n$ converge to a $C$--bi-Lipschitz curve $\alpha\colon[-1,1]\ra \G_{\om}$ with $\alpha(-1)=x_-$, $\alpha(0)=x_0$ and $\alpha(1)=x_+$.

Since $H$ is relatively wide in $G$ and $x_{\pm}$ are limits of sequences in $H\cu\G$, the points $x_{\pm}$ lie in the same connected component of $\G_{\om}\setminus\{x_0\}$. Since $\G_{\om}$ is a geodesic space, it is locally path connected, hence there exists a continuous path in $\G_{\om}\setminus\{x_0\}$ joining $x_{\pm}$. Being compact, this path misses the $4\eps$--ball around $x_0$ for some $\eps>0$. Thus, we can discretise it to a sequence $x_1,\dots,x_k\in \G_{\om}\setminus\mc{B}_{\G_{\om}}(x_0,4\eps)$ with $d(x_i,x_{i+1})\leq\eps$ and $x_1=x_-$, $x_k=x_+$.

Choose approximations $x_i(n)\in\G$ with $d(x_i(n),x_{i+1}(n))\leq 2\eps r_n$ and $d(x_i(n),\alpha_n(0))\geq 3\eps r_n$, so that $x_1(n)=\alpha_n(-r_n)$ and $x_k(n)=\alpha_n(r_n)$. Joining consecutive points by geodesics, we obtain a path of length $\leq 2k\eps r_n$ avoiding $\mc{B}_{\G}(\alpha_n(0),\eps r_n)$ and connecting $\alpha_n(\pm r_n)$. For large $n$, we have $\tfrac{1}{n}<\eps$ and $n>2k\eps$, which contradicts our initial assumptions.

\smallskip
{\bf (3)$\Ra$(1).} Suppose towards a contradiction that $H$ is not relatively wide in $G$. Thus, there exist an asymptotic cone $\G_{\om}$ and two points $x,y\in H_{\om}\cu\G_{\om}$ that are separated by a third point $z\in\G_{\om}$. 

Write $x=(x_n)$ and $y=(y_n)$ with $x_n,y_n\in H\cu\G^{(0)}$ and choose geodesics $\beta_n\colon I_n\ra\L$ joining $x_n$ and $y_n$, for intervals $I_n$. The paths $i\o\beta_n$ are uniform quasi-geodesics in $\G$, so they converge to a bi-Lipschitz path $\alpha\colon I\ra H_{\om}\cu\G_{\om}$ from $x$ to $y$. Since $z$ separates $x$ and $y$, it must lie in $\alpha(I)$. 

Thus, possibly replacing $x$ and $y$ with other points of $\alpha(I)$, reparametrising $\alpha$ and $\beta_n$, and modifying the scaling factors in the definition of $\G_{\om}$ (though not their growth rate), we can assume that $I=[-1,1]$, $\alpha(0)=z$ and $I_n=[-r_n,r_n]$ for some sequence $r_n\ra+\infty$. In particular, the points $i\o\beta_n(0)$ converge to $z$.

Since (3) holds, there exist edge paths $\g_n\cu\G$ joining $x_n$ to $y_n$, avoiding $\mc{B}_{\G}(i\o\beta_n(0),\tfrac{r_n}{K_0})$ and having length $|\g_n|\leq K_0\cdot r_n$. These paths can be discretised to sequences of points $w_1(n),\dots,w_k(n)\in\G$, with $k$ independent of $n$, such that $d(w_i(n),i\o\beta_n(0))>\tfrac{r_n}{K_0}$ and $d(w_i(n),w_{i+1}(n))\leq \tfrac{r_n}{2K_0}$ for all $i$. We choose these points so that $w_1(n)=x_n$ and $w_k(n)=y_n$.

Let $w_i\in\G_{\om}$ be the limit of the sequence $w_i(n)$. Note that $d(w_i,z)\geq\tfrac{1}{K_0}$ and $d(w_i,w_{i+1})\leq\tfrac{1}{2K_0}$, so every geodesic joining $w_i$ and $w_{i+1}$ in $\G_{\om}$ avoids $\mc{B}_{\G_{\om}}(z,\tfrac{1}{2K_0})$. Concatenating such geodesics, we obtain a path in $\G_{\om}$ from $w_1=x$ to $w_k=y$ avoiding $\mc{B}_{\G_{\om}}(z,\tfrac{1}{2K_0})$, which contradicts the assumption that $x$ and $y$ are separated by $z$.
\end{proof}

\subsection{Graphs of groups.}

Throughout the paper, we are interested in finitely generated groups $G$ that split as a graph of groups. This is equivalent to the fact that $G$ admits a non-elliptic action on a simplicial tree $G\acts T$ without inversions \cite{Serre}. In this case, there is a unique smallest $G$--invariant subtree \cite{Culler-Morgan}. Restricting to it, we can further assume that the action $G\acts T$ is \emph{minimal}, i.e.\ that no proper subtree is $G$--invariant.

In this subsection, we consider the following setting.

\begin{ass}\label{ass1}
Let $G$ be a group generated by a finite subset $S_G\cu G$ with $1\in S_G$ and $S_G=S_G^{-1}$. Let $\G$ be the corresponding Cayley graph of $G$, endowed with its graph metric $d_{\G}$. Suppose that we have a non-elliptic, minimal action without inversions on a simplicial tree $G\acts T$.
\end{ass}

Vertex stabilisers for the action $G\acts T$ will be referred to as \emph{vertex groups}, and usually denoted by $V$. Similarly, \emph{edge groups} are $G$--stabilisers of edges of $T$, and will usually be denoted by $E$ when the corresponding edge is not specified. To avoid confusion between edges of $\G$ and $T$, we will denote the latter by fraktur letters $\mf{e}$.

\begin{rmk}\label{finite graph of groups}
The action $G\acts T$ is cocompact. Indeed, fixing a basepoint $p\in T$, let $K$ be the convex hull of $S_G\cdot p$. This is a compact subtree of $T$. Since $S_G$ generates $G$, the subset $G\cdot K$ is connected, hence a $G$--invariant subtree. By minimality, we obtain $T=G\cdot K$.
\end{rmk}

The following results are certainly well-known to experts, but we were not able to locate proofs in the literature at the required level of generality. Our arguments are inspired by the proof of \cite[Proposition~1.2]{Bow-JSJ} in the case of hyperbolic groups with quasi-convex edge groups.

\begin{lem}\label{path decomposition}
Consider a vertex $p\in T$ and let $V$ be its stabiliser. There exist a constant $D_p$ and stabilisers $E_1,\dots,E_k$ of edges incident to $p$ such that the following holds. Every path $\g\cu\G$ joining points of $V$ can be decomposed as a concatenation $\g_1\g_2\dots\g_m$ with the following property. The endpoints of each $\g_i$ lie in the $D_p$--neighbourhood in $\G$ of a coset $v_iE_{j_i}$ with $v_i\in V$ and $1\leq j_i\leq k$.
\end{lem}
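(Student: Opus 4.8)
The statement is a combinatorial/geometric decomposition of paths in the Cayley graph $\G$ in terms of the tree $T$. The plan is to use the action $G \acts T$ to translate a path in $\G$ into a sequence of moves in $T$, and to control how the path interacts with the edges it crosses. Fix the vertex $p \in T$ with stabiliser $V$. Since the action is cocompact (Remark~\ref{finite graph of groups}), the stabilisers of edges incident to $p$ fall into finitely many $V$--orbits; let $E_1, \dots, E_k$ be representatives of these orbits. I also fix, once and for all, for each edge $\mf{e}$ incident to $p$, an element $g_{\mf{e}} \in G$ moving $\mf{e}$ into the star of $p$ in a fixed way, and I use these to define the constant $D_p$ — roughly the maximum $\G$--length of a geodesic representative of any such $g_{\mf{e}}$, or of the generators of the $E_i$, whatever bookkeeping makes the final estimate go through.

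The key step is the following. Given a path $\g \cu \G$ from $v \in V$ to $v' \in V$, write it as a concatenation of edges of $\G$, each labelled by a generator $s \in S_G$. Reading the labels one at a time, we get a sequence of group elements $1 = h_0, h_1, \dots, h_n$ with $h_{i+1} = h_i s_{i+1}$, where $h_n^{-1} v' =: w \in V$ (in fact $w = v^{-1} v'$ up to left-translating $\g$, which I may assume WLOG is based at $1 \in V$). Now project to the tree: consider the sequence of vertices $h_i \cdot p \in T$. Consecutive ones are at distance at most $\diam(K)$ apart, where $K$ is the compact set from Remark~\ref{finite graph of groups}; in particular the path $i \mapsto h_i \cdot p$ in $T$ starts and ends at $p$. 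The essential observation is that every time this path in $T$ "leaves and returns to" $p$ — more precisely, every maximal subpath that stays in a single component of $T \setminus \{p\}$, bracketed by crossings of a single edge $\mf{e}$ incident to $p$ — the corresponding portion of $\g$ has both endpoints within bounded $\G$--distance of a coset of the stabiliser of $\mf{e}$. Indeed, the first and last elements $h_a, h_b$ of such a block both fix (or nearly fix) $p$ on the near side, so $h_a$ and $h_b$ lie in a bounded neighbourhood of $V$; and since the block crossed edge $\mf{e}$, translating $\mf{e}$ back to the star of $p$ by a fixed $V$--translate shows $h_a, h_b$ lie in a bounded neighbourhood of $v_i E_{j_i}$ for an appropriate $v_i \in V$ and appropriate index $j_i$. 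The cut points of $\g$ are then exactly the indices $i$ where $h_i \cdot p = p$; these chop $\g$ into the desired pieces $\g_1, \dots, \g_m$.

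The main obstacle I anticipate is making the "bounded neighbourhood" estimates uniform and honest. Two issues: first, the elements $h_i$ with $h_i \cdot p = p$ are exactly the elements of $V$, so those break points are genuinely in $V$; but the subtler point is that when $\g$ wanders deep into $T \setminus \{p\}$ (into a vertex $q$ far from $p$), we must not demand that the whole subpath stay near a coset — only its two endpoints — and we must correctly identify which edge $\mf{e}$ incident to $p$ is "responsible" for that excursion. This is where the tree structure is crucial: $T \setminus \{p\}$ has one component per edge incident to $p$, so an excursion is unambiguously assigned to one such edge $\mf{e}$, and the stabiliser of $\mf{e}$ is $V$--conjugate to one of $E_1, \dots, E_k$ precisely because there are finitely many $V$--orbits of edges at $p$. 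Second, one must check that the number of pieces $m$ can be anything (no bound is claimed, which is good — so this is not actually an obstacle) and that each $\g_i$ is itself a legitimate path, which is automatic since they are subpaths of $\g$. I would organise the write-up as: (a) set up $E_1, \dots, E_k$ and $D_p$; (b) reduce to $\g$ based at $1$; (c) define the tree-path $h_i \cdot p$ and the break set $\{i : h_i \cdot p = p\}$; (d) for a single block between consecutive break points, produce the coset $v_i E_{j_i}$ and verify the $D_p$--neighbourhood bound on its endpoints; (e) concatenate. Step (d) is the heart of the matter and the one requiring care with constants.
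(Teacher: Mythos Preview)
Your overall strategy matches the paper's: project $\g$ to $T$ via $g \mapsto gp$, track excursions away from $p$, and assign to each excursion the edge of $T$ incident to $p$ through which it exits, thereby identifying the relevant coset. The gap is in your identification of the cut points. You propose cutting precisely at the indices $i$ with $h_i \cdot p = p$, i.e.\ $h_i \in V$; but this is too coarse. A single edge $[h_i, h_i s]$ of $\G$ projects to the geodesic $h_i \cdot [p, sp] \cu T$, which can pass through $p$ even when neither $h_i p$ nor $h_i s \cdot p$ equals $p$ --- this happens whenever $h_i^{-1} p$ is an interior vertex of $[p, sp]$, and since $K = \hull(S_G \cdot p)$ typically contains $p$ as an interior vertex, such crossings are to be expected. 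Between two consecutive indices in your cut set, the continuous $T$--path may therefore exit $p$ via one edge $\mf{e}$ and re-enter via a different edge $\mf{e}'$; the endpoints $h_a, h_b$ of that block then lie near cosets of \emph{different} edge stabilisers, and the conclusion of the lemma fails for that block.

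The paper's fix (after first reducing, as you do, to the case where $\g$ meets $V$ only at its endpoints) is to cut at every edge $e$ of $\g$ whose $T$--image contains $p$, regardless of whether the endpoints of $e$ lie in $V$. If $g_i$ is the initial vertex of the $i$--th such edge, one checks that $g_i$ and $g_{i+1}$ both lie in $\Omega(\mf{f}_i) := \{g \in G \mid \mf{f}_i \cu gK\}$ for the now well-defined edge $\mf{f}_i$ through which the excursion leaves and re-enters $p$; separately, each $\Omega(\mf{e})$ is a union of at most $N$ right cosets of $G_{\mf{e}}$ (where $N$ is the number of edges of $K$), hence lies in a $D_p$--neighbourhood of a single left coset $v E_j$. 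Your parenthetical ``(or nearly fix) $p$'' and the clause about ``maximal subpaths in a single component of $T \setminus \{p\}$'' suggest you already sense this finer decomposition; but it does \emph{not} coincide with your stated cut set, and the endpoints of the resulting pieces will generally not lie in $V$.
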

\begin{proof}
By Remark~\ref{finite graph of groups}, there are finitely many $V$--orbits of edges of $T$ incident to $p$. Let $\mf{e}_1,\dots,\mf{e}_k$ be a finite list of representatives and let $E_1,\dots,E_k$ be their stabilisers. Let $K\cu T$ be the convex hull of $S_G\cdot p$. If $\mf{e}\cu T$ is an edge, define $\Om(\mf{e}):=\{g\in G\mid \mf{e}\cu gK\}$. 

\smallskip
{\bf Claim:} \emph{there exists a constant $D_p$ such that, for every edge $\mf{e}\cu T$ incident to $p$, the set $\Om(\mf{e})$ is contained in the $D_p$--neighbourhood in $\G$ of some coset $vE_i$ with $v\in V$ and $1\leq i\leq k$.}

\smallskip\noindent
\emph{Proof of claim.}
Observe that $\Om(\mf{e})$ is a union of at most $N$ right cosets of $E$, where $E$ is the stabiliser of $\mf{e}$ and $N$ is the number of edges in the compact tree $K$. Thus, there exists a constant $D_p$ such that, for each $1\leq i\leq k$, the set $\Om(\mf{e}_i)$ is contained in the $D_p$--neighbourhood of $E_i$ in $\G$. 

For an arbitrary edge $\mf{e}\ni p$, we can write $\mf{e}=v\mf{e}_i$ for some $v\in V$ and some $i$. In this case, $\Om(\mf{e})=\Om(v\mf{e}_i)=v\Om(\mf{e}_i)$ is contained in the $D_p$--neighbourhood of the left coset $vE_i$.
\hfill$\blacksquare$

\smallskip
Now, we define a continuous $G$--equivariant map $f_p\colon\G\ra T$ as follows. For every $s\in S_G$, let $\pi_s\cu K$ be the geodesic from $p$ to $sp$. If $g\in G$, we set $f_p(g)=gp$. Then, on each edge $[g,gs]\cu\G$ with $g\in G$ and $s\in S_G$, we define $f_p$ as the linear parametrisation of the path $g\pi_s$.

Up to decomposing $\g$, we can assume that $\g$ meets $V$ only at its endpoints. We can also suppose that $\g$ is not a single edge, or the statement is clear. Thus, $f_p\o\g$ is a nontrivial path in the tree $T$ beginning and ending at the basepoint $p$. 

Let $e_1,\dots,e_m\cu\G$ be all edges of $\g$ for which $f_p(e_i)$ contains $p$, in the order in which they appear along $\g$. Thus, $e_1$ and $e_m$ are necessarily the initial and terminal edge of $\g$, respectively. Note that the geodesic $f_p(e_i)\cu T$ meets $p$ exactly once and not at its endpoints, except for $e_1$ and $e_m$. 

Define $\g_i\cu\g$ as the sub-segment between the initial vertex of $e_i$ and the initial vertex of $e_{i+1}$. The portion of the path $f_p(\g_i\cup e_{i+1})$ between the two occurrences of $p$
% only two by above paragraph
is contained in a connected component of $T\setminus\{p\}$. Thus, it must begin and end by crossing the same edge of $T$ incident to $p$. 

We name this edge $\mf{f}_i$. Say the endpoints of $e_i\cu\G$ are $g_i\in G$ and $g_is_i$ with $s_i\in S_G$. Since $\mf{f}_i\cu f_p(e_i)\cap f_p(e_{i+1})$, we have $\mf{f}_i\cu g_i\pi_{s_i}\cap g_{i+1}\pi_{s_{i+1}}$, hence $\{g_i,g_{i+1}\}\cu\Om(\mf{f}_i)$. Since $g_i$ and $g_{i+1}$ are the endpoints of $\g_i$, the Claim concludes the proof.
\end{proof}

If $\alpha\cu\G$ is a path, we denote by $|\alpha|$ its \emph{length}, i.e.\ the number of edges that it contains. Recall that a finitely generated subgroup of $G$ is \emph{undistorted} if the inclusion in $G$ is a quasi-isometric embedding.

\begin{lem}\label{undistorted vertex groups lem}
Let $p\in T$ be a vertex with stabiliser $V$. Suppose that the stabiliser of every edge $p\in\mf{e}\cu T$ is finitely generated and undistorted in $G$. Then $V$ is finitely generated and undistorted.
\end{lem}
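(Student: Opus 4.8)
The plan is to prove finite generation and undistortion of $V$ in one stroke, by producing a \emph{finite} subset $S\cu V$ together with a linear estimate
\[ d_S(1,v)\;\leq\; C'\cdot d_{\G}(1,v)+C' \qquad\text{for all } v\in V, \]
where $d_S$ denotes the word metric on $V$ associated with $S$. Since $S$ is finite, the reverse inequality $d_{\G}(1,v)\leq M\cdot d_S(1,v)$ holds automatically with $M:=\max_{s\in S}d_{\G}(1,s)$, so such an estimate simultaneously shows that $S$ generates $V$ and that the inclusion $V\hookrightarrow G$ is a quasi-isometric embedding. The two ingredients are Lemma~\ref{path decomposition} and the hypothesis that the edge groups appearing in it are undistorted in $G$.

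So let $D_p$ and the edge groups $E_1,\dots,E_k$ be those provided by Lemma~\ref{path decomposition}. Since $G\acts T$ is without inversions, every $E_j$ fixes $p$, hence $E_j\leq V$; fix a finite generating set $S_{E_j}$ for each $E_j$ (possible by hypothesis). As $\G$ is locally finite, $B:=\{v\in V\mid d_{\G}(1,v)\leq 2D_p\}$ is finite, and we set $S:=B\cup S_{E_1}\cup\dots\cup S_{E_k}$. Now fix $v\in V$ (the case $v=1$ being trivial) and a $\G$--geodesic $\g$ from $1$ to $v$, of length $n:=d_{\G}(1,v)$. Lemma~\ref{path decomposition} writes $\g=\g_1\cdots\g_m$; discarding trivial pieces, we may assume each $\g_i$ is nonempty, so $m\leq n$. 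Write the breakpoints as $1=x_0,x_1,\dots,x_m=v$, so that $\g_i$ runs from $x_{i-1}$ to $x_i$, and recall that both $x_{i-1}$ and $x_i$ lie within $D_p$ of a coset $v_iE_{j_i}$ with $v_i\in V$. As the $x_i$ need not lie in $V$, for each $i$ we choose $a_i,b_i\in E_{j_i}$ with $d_{\G}(x_{i-1},v_ia_i)\leq D_p$ and $d_{\G}(x_i,v_ib_i)\leq D_p$, and set $y_i^-:=v_ia_i$ and $y_i^+:=v_ib_i$; these lie in $V$ since $v_i\in V$ and $E_{j_i}\leq V$. The key point is that $(y_i^-)^{-1}y_i^+=a_i^{-1}b_i\in E_{j_i}$, while the triangle inequality gives
\[ d_{\G}\big(1,(y_i^-)^{-1}y_i^+\big)=d_{\G}(y_i^-,y_i^+)\leq D_p+|\g_i|+D_p. \]

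Telescoping the breakpoints yields
\[ v\;=\;y_1^-\cdot\big((y_1^-)^{-1}y_1^+\big)\cdot\big((y_1^+)^{-1}y_2^-\big)\cdot\big((y_2^-)^{-1}y_2^+\big)\cdots\big((y_m^-)^{-1}y_m^+\big)\cdot\big((y_m^+)^{-1}v\big). \]
The $m+1$ ``correction'' factors $y_1^-$, the terms $(y_i^+)^{-1}y_{i+1}^-$ with $1\leq i<m$, and $(y_m^+)^{-1}v$ all lie in $V$ and have $d_{\G}$--length at most $2D_p$ (for the middle ones, both $y_i^+$ and $y_{i+1}^-$ lie within $D_p$ of $x_i$), so each one lies in $B\cu S$ and contributes a single letter. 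Each of the $m$ ``edge'' factors $(y_i^-)^{-1}y_i^+$ lies in $E_{j_i}$; since $E_{j_i}$ is undistorted in $G$ and there are only finitely many $j$, there is a uniform constant $C$ with $d_{S_{E_{j_i}}}\big(1,(y_i^-)^{-1}y_i^+\big)\leq C\big(|\g_i|+2D_p+1\big)$. Summing over the $2m+1$ factors, using $\sum_i|\g_i|=n$ and $m\leq n$, gives $d_S(1,v)\leq C'n+1$ for a constant $C'=C'(C,D_p)$. This is the required estimate; in particular $S$ generates $V$, so $V$ is finitely generated, and $V$ is undistorted in $G$.

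The one genuinely delicate point is that the breakpoints $x_i$ of the decomposition in Lemma~\ref{path decomposition} are vertices of $\G$ but generally \emph{not} elements of $V$, and two consecutive pieces $\g_i,\g_{i+1}$ are governed by \emph{different} cosets $v_iE_{j_i}$ and $v_{i+1}E_{j_{i+1}}$. This is exactly what forces the pair of $V$--points $y_i^{\pm}$ at each breakpoint, together with the bounded interleaving corrections; and it is here that one exploits that these are cosets of subgroups \emph{of} $V$, so that $(y_i^-)^{-1}y_i^+$ lands inside $E_{j_i}\leq V$ and its a priori large $\G$--length can be traded for a comparable word length over $S_{E_{j_i}}$ by undistortedness. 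Everything else is bookkeeping with the triangle inequality and the bound $m\leq n$ on the number of pieces.
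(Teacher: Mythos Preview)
Your proof is correct and follows essentially the same approach as the paper: both arguments apply Lemma~\ref{path decomposition} to a $\G$--geodesic from $1$ to $v$, then use undistortedness of the edge groups $E_j$ to replace each piece by something in $V$ of comparable length, yielding a finite generating set and a linear bound. The only cosmetic difference is packaging---the paper constructs a path in $\G$ lying in a bounded neighbourhood of $V$ and reads off the generating set as a ball, whereas you fix $S=B\cup\bigcup_j S_{E_j}$ in advance and write $v$ directly as a telescoping word; your treatment of the breakpoints $x_i\notin V$ via the auxiliary points $y_i^{\pm}$ is exactly the content of the paper's replacement $\g_i\mapsto\g_i'$.
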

\begin{proof}
We will prove the following.

\smallskip
{\bf Claim:} \emph{there exists a constant $L$ such that, for every $v\in V$, there exists a path $\alpha\cu\G$ from the identity to $v$ such that $\alpha$ is contained in the $L$--neighbourhood of $V$ and $|\alpha|\leq L\cdot d_{\G}(1,v)$.}

\smallskip
Assuming the claim, we define $\Sigma$ as the intersection of $V$ and the $(2L+1)$--ball in $\G$ centred at the identity. Since $\G$ is locally finite, the set $\Sigma$ is finite. Denoting by $d_{\Sigma}$ the word metric on $V$ induced by $\Sigma$, the claim shows that $d_{\Sigma}(1,v)\leq L\cdot d_{\G}(1,v)$ for every $v\in V$. Thus, $\Sigma$ generates $V$ and $V$ is undistorted in $G$.

Now, let us prove the claim. Let the constant $D_p$ and the edge groups $E_1,\dots,E_k\leq V$ be those provided by Lemma~\ref{path decomposition}. Let $\g\cu\G$ be a geodesic from the identity to some element $v\in V$. Decompose $\g=\g_1\g_2\dots\g_m$ as in Lemma~\ref{path decomposition}, with the endpoints of $\g_i$ in the $D_p$--neighbourhood of a coset $v_iE_{j_i}$.

Since $E_1,\dots,E_k$ are finitely generated and undistorted, there exists a constant $L$ such that, for all $i$ and all $x,y\in E_i$, the points $x$ and $y$ are joined by a path $\beta\cu\G$ contained in the $L$--neighbourhood of $E_i$ and satisfying $|\beta|\leq L\cdot d_{\G}(x,y)$. The same holds if $x$ and $y$ lie in a left coset of some $E_i$. 

It follows that we can replace each $\g_i$ with a path $\g_i'$ with the same endpoints so that $\g_i'$ is contained in the $L$--neighbourhood of $v_iE_{j_i}\cu V$ and $|\g_i'|\leq L\cdot |\g_i|$. Define $\alpha$ as the concatenation of the $\g_i'$. Then, it is clear that $\alpha$ is contained in the $L$--neighbourhood of $V$ and $|\alpha|\leq L\cdot |\g|=L\cdot d_{\G}(1,v)$. This proves the claim, hence the lemma.
\end{proof}

\begin{cor}\label{undistorted vertex groups cor}
If all edge groups are finitely generated and undistorted in $G$, then all vertex groups are finitely generated and undistorted in $G$.
\end{cor}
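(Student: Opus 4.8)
The plan is to deduce this immediately from Lemma~\ref{undistorted vertex groups lem}, applied one vertex at a time. First I would fix an arbitrary vertex $p\in T$ and let $V$ be its stabiliser. Every edge $\mf{e}\cu T$ with $p\in\mf{e}$ has stabiliser equal to some edge group $E$ of the splitting, which by assumption is finitely generated and undistorted in $G$. Hence the hypotheses of Lemma~\ref{undistorted vertex groups lem} are satisfied at $p$, and the lemma yields that $V$ is finitely generated and undistorted in $G$. Since $p$ was arbitrary, this establishes the claim for every vertex group.

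It is worth recording that, by Remark~\ref{finite graph of groups}, the action $G\acts T$ is cocompact, so there are only finitely many $G$--orbits of vertices; as being finitely generated and undistorted in $G$ is invariant under conjugation, one could equally well verify the conclusion for a single vertex in each orbit. This observation is not strictly needed, however, since Lemma~\ref{undistorted vertex groups lem} applies verbatim to each individual vertex.

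I do not expect any genuine obstacle here: all the substance is already contained in Lemma~\ref{undistorted vertex groups lem} (and, upstream of it, in the path-decomposition Lemma~\ref{path decomposition}), so the corollary is a purely formal consequence. The only point to be mildly careful about is that the hypothesis ``all edge groups are finitely generated and undistorted'' is a statement about \emph{all} edge stabilisers of the action $G\acts T$, which in particular covers the stabilisers of the edges incident to any chosen $p$; once this is spelled out, the reduction to the lemma is automatic.
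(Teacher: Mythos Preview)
Your proposal is correct and matches the paper's approach exactly: the corollary is stated without proof in the paper, as an immediate consequence of Lemma~\ref{undistorted vertex groups lem} applied at each vertex. Your additional remark about cocompactness and conjugacy classes is accurate but, as you note, unnecessary.
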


The fact that vertex groups are finitely generated as soon as edge groups are finitely generated is also proved in detail e.g.\ in \cite[Lemma~8.32, p.~218]{Cohen} and \cite{Dicks-Dunwoody}.

\section{Proof of Theorem~\ref{components intro}.}

\begin{ass}\label{ass3}
Let $G$ be a finitely generated group. Let $G\acts T$ be a minimal action on a simplicial tree without inversions. Suppose that all edge-stabilisers $E\leq G$ are finitely generated, undistorted, and satisfy $(\partial_ME,G)=\emptyset$.
\end{ass}

Fix a finite generating set for $G$ and let $\G$ be the corresponding Cayley graph. Choose a basepoint $p\in T^{(0)}$ and let $f_p\colon G\ra T$ denote the orbit map $f_p(g)=gp$.

The complement in $T$ of an open edge has exactly two connected components, which we refer to as \emph{halfspaces}. Let $\E(T)$ denote the set of (unoriented) edges of $T$, and let $\H(T)$ be the set of halfspaces. Note that $\H(T)$ is naturally in bijection with the set of \emph{oriented} edges of $T$. In particular, every edge $\mf{e}\in\E(T)$ gives rise to two halfspaces $\mf{h},\mf{h}^*$. The complement of $\mf{h}$ is always denoted $\mf{h}^*$.

We denote by $G_{\mf{h}}\leq G$ the stabiliser of the halfspace $\mf{h}$. Since $G$ acts without inversions, $G_{\mf{h}}$ coincides with the stabiliser $G_{\mf{e}}$ of the edge $\mf{e}$ associated to $\mf{h}$. 

Note that every $\mf{h}\in\H(T)$ gives rise to a $G_{\mf{h}}$--invariant partition $G=f_p^{-1}(\mf{h})\sqcup f_p^{-1}(\mf{h}^*)$.

\begin{lem}\label{AIS lem}
We can choose a subgraph $\G(\mf{e})\cu\G$ for every $\mf{e}\in\E(T)$ so that the following hold:
\begin{enumerate}
\item $\G(\mf{e})$ is connected, $G_{\mf{e}}$--invariant and $G_{\mf{e}}$--cocompact;
\item $\G(g\mf{e})=g\G(\mf{e})$ for all $g\in G$ and $\mf{e}\in\E(T)$;
\item if $\mf{h},\mf{h}^*$ are the two halfspaces determined by $\mf{e}$, then $\G(\mf{e})$ contains every edge of $\G$ with an endpoint in $f_p^{-1}(\mf{h})$ and the other in $f_p^{-1}(\mf{h}^*)$;
\item for every $r\geq 0$, we have $\mc{N}_{\G}(f_p^{-1}(\mf{h}), r)\cap\mc{N}_{\G}(f_p^{-1}(\mf{h}^*), r)\cu\mc{N}_{\G}(\G(\mf{e}),r)$;
\item each edge $e\cu\G$ lies in the subgraph $\G(\mf{e})$ for only finitely many edges $\mf{e}\in\E(T)$.
\end{enumerate}
\end{lem}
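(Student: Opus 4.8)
The plan is to construct the subgraphs $\G(\mf{e})$ first on a set of orbit representatives for the $G$--action on $\E(T)$, then extend $G$--equivariantly. By Remark~\ref{finite graph of groups} there are only finitely many $G$--orbits of edges in $T$; fix representatives $\mf{e}_1,\dots,\mf{e}_n$ with stabilisers $E_1,\dots,E_n$. These are finitely generated and undistorted in $G$ by hypothesis (Assumption~\ref{ass3}). For each $i$, the plan is to set
\[
\G_0(\mf{e}_i):=\{\text{edges of }\G\text{ with one endpoint in }f_p^{-1}(\mf{h}_i)\text{ and the other in }f_p^{-1}(\mf{h}_i^*)\},
\]
where $\mf{h}_i,\mf{h}_i^*$ are the halfspaces of $\mf{e}_i$; this subgraph is automatically $E_i$--invariant (since $E_i$ preserves the partition) and forces property~(3). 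The first task is to check it is $E_i$--cocompact: an edge $[g,gs]$ of $\G$ lies in $\G_0(\mf{e}_i)$ exactly when the $T$--geodesic $f_p([g,gs])=g\pi_s$ crosses $\mf{e}_i$, i.e.\ when $\mf{e}_i\in g\cdot(\text{set of edges crossed by some }\pi_s)$; since the latter set is finite (it lies in the compact tree $K$) and $G$ acts on $T$ with $E_i=G_{\mf{e}_i}$, the edges of $\G_0(\mf{e}_i)$ fall into finitely many $E_i$--orbits, using that $G\cdot\mf{e}_i\cong G/E_i$. Then I would enlarge $\G_0(\mf{e}_i)$ to a \emph{connected} $E_i$--invariant, $E_i$--cocompact subgraph $\G(\mf{e}_i)$: pick a finite generating set $\Sigma_i$ of $E_i$ (these exist since $E_i$ is finitely generated), pick for each vertex $v$ appearing as an endpoint of an edge of $\G_0(\mf{e}_i)$ in some fixed set of $E_i$--orbit representatives a geodesic in $\G$ from $v$ to a fixed basepoint and to $\sigma v$ for each $\sigma\in\Sigma_i$, and throw in the $E_i$--orbits of all these finitely many geodesics. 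The result is connected (any two vertices are joined through the "spanning tree'' built from the $\Sigma_i$--paths) and still $E_i$--cocompact. Finally extend by $\G(g\mf{e}_i):=g\G(\mf{e}_i)$; this is well-defined because if $g\mf{e}_i=g'\mf{e}_i$ then $g^{-1}g'\in E_i$ and $\G(\mf{e}_i)$ is $E_i$--invariant, giving properties~(1) and~(2) on the nose, and property~(3) transports along since the partition is equivariant.

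Property~(5) is where undistortedness enters, and I expect it to be the main obstacle. The issue is that a single edge $e=[g,gs]$ of $\G$ could conceivably lie in $\G(\mf{e})$ for infinitely many $\mf{e}$, since the enlargement step added long geodesics. To control this, I would argue as follows. By construction $\G(\mf{e}_i)$ is contained in the $R_i$--neighbourhood of $\G_0(\mf{e}_i)$ for some constant $R_i$ (the max length of the finitely many geodesics added, in each orbit), and $\G_0(\mf{e}_i)$ is contained in the $R_i'$--neighbourhood of the coset $E_i$ — this is precisely where undistortedness of $E_i$ is used, via an argument like the Claim in Lemma~\ref{path decomposition}: an endpoint of an edge in $\G_0(\mf{e}_i)$ maps under $f_p$ to a vertex adjacent to $\mf{e}_i$, so it lies in a bounded set of cosets of $E_i$, each within bounded $\G$--distance of $E_i$ by undistortedness. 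Setting $R:=\max_i(R_i+R_i')$ and translating, $\G(g\mf{e}_i)\subseteq\mc{N}_\G(gE_i,R)$ for all $g$. Now if the edge $e$ (say with a vertex at $g_0$) lies in $\G(\mf{h})$ for $\mf{h}$ the halfspace of an edge $\mf{e}=g\mf{e}_i$, then $d_\G(g_0,gE_i)\le R+1$, so $g$ lies in one of finitely many (depending on $R$, by local finiteness of $\G$) double-coset-type sets; more precisely $g_0^{-1}g\cdot E_i$ meets the ball of radius $R+1$, so $g_0^{-1}g$ ranges over finitely many cosets $hE_i$, giving finitely many edges $g_0 h\mf{e}_i$. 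Since $i$ ranges over a finite set, only finitely many $\mf{e}$ occur, which is~(5).

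For property~(4), the plan is to use that $T$ is a tree together with property~(3). Suppose $x\in\mc{N}_\G(f_p^{-1}(\mf{h}),r)\cap\mc{N}_\G(f_p^{-1}(\mf{h}^*),r)$, so there are vertices $g_1\in f_p^{-1}(\mf{h})$, $g_2\in f_p^{-1}(\mf{h}^*)$ with $d_\G(x,g_j)\le r$. Take a geodesic $\eta$ in $\G$ from $g_1$ to $g_2$ through (within distance $r$ of) $x$: since $f_p(g_1)$ and $f_p(g_2)$ lie on opposite sides of the edge $\mf{e}$ in the tree $T$, the $f_p$--image of $\eta$ must cross $\mf{e}$, so some edge $e'$ of $\eta$ has one endpoint in $f_p^{-1}(\mf{h})$ and the other in $f_p^{-1}(\mf{h}^*)$; by property~(3), $e'\subseteq\G(\mf{e})$. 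This $e'$ lies on the segment of $\eta$ between $g_1$ and $g_2$, hence within distance $r$ of $x$, so $x\in\mc{N}_\G(\G(\mf{e}),r)$. (One should really run this with $x$ itself a vertex or with the $\le r$ replaced by the geodesic estimate, but the conclusion is the same up to harmless adjustment of constants; since property~(4) is an inclusion of $r$--neighbourhoods for \emph{all} $r$, and $\eta$ has length $\le 2r$, the crossing edge is within $r$ of at least one of $g_1,g_2$ and hence within $2r$ of $x$ — if one wants exactly $r$, note $x$ lies on a geodesic realising the two distances, so a crossing edge of that geodesic is within $r$ of $x$.) This completes the verification of all five properties.
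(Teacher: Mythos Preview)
Your construction and the paper's are essentially the same---crossing edges plus Cayley-graph paths for the edge group, extended $G$--equivariantly---and your arguments for (1)--(3) match. Two points are worth flagging.

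For (4), the paper streamlines your final parenthetical: since $G=f_p^{-1}(\mf{h})\sqcup f_p^{-1}(\mf{h}^*)$, the point $x$ (or its nearest vertex) already lies in one of the two sides, so a single path of length $\le r$ from $x$ to the \emph{other} side already contains a crossing edge, and that edge is within $r$ of $x$. No concatenation of two length-$r$ paths is needed.

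For (5), you overcomplicate, and undistortedness is not needed here. The paper simply observes that if $e\cu\G(g\mf{e}_i)=g\G(\mf{e}_i)$ then $g^{-1}e\cu\G(\mf{e}_i)$; since $E_i$ acts cocompactly on $\G(\mf{e}_i)$ (this is property~(1)), the edge $g^{-1}e$ lies in one of finitely many $E_i$--orbits, so $g$ lies in finitely many left cosets of $E_i$, hence $g\mf{e}_i$ takes finitely many values. Your neighbourhood argument also works, but the step attributing ``bounded $\G$--distance of $E_i$'' to undistortedness is spurious: a right coset $E_ih$ is always at $\G$--distance at most $|h|_{S_G}$ from $E_i$, and there are only finitely many such $h$ in play.
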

\begin{proof}
Recall that edge-stabilisers are finitely generated. If $S_{\mf{e}}$ is a finite generating set of $G_{\mf{e}}$, we can construct a subgraph $\G(\mf{e})\cu\G$ satisfying (1) by choosing paths joining the identity $1\in G$ to the elements of $S_{\mf{e}}$, and taking the union of all their $G_{\mf{e}}$--translates. Defining $\G(\mf{e})$ in this fashion for one edge $\mf{e}$ in every $G$--orbit of edges of $T$, and then setting $\G(g\mf{e}):=g\G(\mf{e})$, guarantees that condition~(2) is also satisfied.

Now, let us ensure that (3) holds. For this, it suffices to show that there are only finitely many $G_{\mf{e}}$--orbits of edges of $\G$ with an endpoint in $f_p^{-1}(\mf{h})$ and the other in $f_p^{-1}(\mf{h}^*)$, as these can then be added to $\G(\mf{e})$. If $S_G$ is the finite generating set of $G$ giving rise to $\G$, then every such edge is of the form $[g,gs]$ with $gp\in\mf{h}$, $gsp\in\mf{h}^*$ and $s\in S_G$. Note that $g^{-1}\mf{e}$ is then one of the finitely many edges separating $p$ and $sp$ for some $s\in S_G$. It follows that $g$ lies in a finite union of right cosets of $G_{\mf{e}}$, as required.

Note that (4) follows from (3): if a point $x\in\G$ lies in the $r$--neighbourhood of both $f_p^{-1}(\mf{h})$ and $f_p^{-1}(\mf{h}^*)$, then it lies on a path of length $\leq r$ joining $f_p^{-1}(\mf{h})$ to $f_p^{-1}(\mf{h}^*)$. Since $G=f_p^{-1}(\mf{h})\sqcup f_p^{-1}(\mf{h}^*)$, condition~(3) implies that such a path must contain an edge of $\G(\mf{e})$. Hence $x$ lies in the $r$--neighbourhood of $\G(\mf{e})$.

Finally, (5) follows from (1). Fix some $\mf{e}\in\E(T)$. Suppose that $e\cu g\G(\mf{e})$ for some $g\in G$. Since $g^{-1}e\cu\G(\mf{e})$ and $G_{\mf{e}}$ acts cocompactly on $\G(\mf{e})$, we deduce that $g$ must lie in a finite union of left cosets of $G_{\mf{e}}$. In other words, $e$ is contained in $\G(\mf{e}')$ for only finitely many edges $\mf{e}'$ in the $G$--orbit of $\mf{e}$. Since there are only finitely many $G$--orbits in $\E(T)$, by Remark~\ref{finite graph of groups}, this proves (5).
\end{proof}

Let us choose $\G(\mf{e})\cu\G$ for every $\mf{e}\in\E(T)$ as in Lemma~\ref{AIS lem}. If $\mf{h}$ is one of the two halfspaces determined by $\mf{e}$, it is convenient to define $\G(\mf{h})\cu\G$ as the set $f_p^{-1}(\mf{h})\setminus\G(\mf{e})\cu G$ along with all (half-open) edges of $\G\setminus\G(\mf{e})$ that it intersects. Thus, we have a $G_{\mf{e}}$--invariant partition:
\[ \G = \G(\mf{h}) \sqcup \G(\mf{e}) \sqcup \G(\mf{h}^*).\]

\begin{lem}\label{stay close to w}
For every Morse gauge $N$, there exists a constant $D(N)$ with the following property. If $\beta\cu\G$ is an $N$--Morse geodesic with endpoints in $\G(\mf{e})$ for some $\mf{e}\in\E(T)$ (resp.\ in $\G(\mf{h})$ for some $\mf{h}\in\H(T)$), then $\beta$ is contained in the $D(N)$--neighbourhood of $\G(\mf{e})$ (resp.\ of $\G(\mf{h})$).
% note that $\G(\mf{e})$ is contained in a uniform neighbourhood of $\G(\mf{h})$ by cocompactness
\end{lem}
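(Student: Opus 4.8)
The plan is to prove the statement first for geodesics with endpoints in some $\G(\mf e)$ (the ``edge case''), and then deduce the ``halfspace case'' from it. The edge case rests on the fact that each subgraph $\G(\mf e)$ is uniformly quasi-isometrically embedded in $\G$: being connected and $G_{\mf e}$--cocompact by Lemma~\ref{AIS lem}(1), it is quasi-isometric to the finitely generated, undistorted subgroup $G_{\mf e}\leq G$, with constants that are uniform because $T$ has only finitely many $G$--orbits of edges. Feeding a geodesic of $\G(\mf e)$ into the Morse stability statement of Lemma~\ref{nearby ray}(1) then forces $\beta$ to remain near $\G(\mf e)$. For the halfspace case, the idea is to cut $\beta$ at the first and last times it meets $\G(\mf e)$: the two outer sub-geodesics are trapped in $\G(\mf h)$ because $\G(\mf e)$ separates $\G(\mf h)$ from $\G(\mf h^*)$ by Lemma~\ref{AIS lem}(3), while the middle sub-geodesic has endpoints in $\G(\mf e)$ and is controlled by the edge case, once we know that $\G(\mf e)$ is itself uniformly coarsely contained in $\G(\mf h)$.

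Concretely, for the edge case, fix a Morse gauge $N$ and an $N$--Morse geodesic $\beta$ with endpoints in $\G(\mf e)$. Choose vertices $x',y'\in\G(\mf e)^{(0)}$ within distance $1$ of the two endpoints of $\beta$, and let $\g'$ be a geodesic of $\G(\mf e)$, with respect to its intrinsic path metric, from $x'$ to $y'$. Since the inclusion $\G(\mf e)\hookrightarrow\G$ is a uniform quasi-isometric embedding, $\g'$ is a uniform quasi-geodesic of $\G$ whose endpoints lie within distance $1$ of those of $\beta$. Lemma~\ref{nearby ray}(1) then yields $d_{\rm Haus}(\beta,\g')\leq D_1(N)$ for a constant $D_1(N)$ depending only on $N$, and since $\g'\cu\G(\mf e)$ we conclude $\beta\cu\mc N_{\G}(\G(\mf e),D_1(N))$.

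For the halfspace case, let $\mf h$ be a halfspace with associated edge $\mf e$, and let $\beta\colon[0,L]\ra\G$ be an $N$--Morse geodesic with endpoints in $\G(\mf h)$. First, since $\G(\mf e)$ is $G_{\mf e}$--cocompact and $\G(\mf h)$ is $G_{\mf e}$--invariant (recall $G_{\mf e}=G_{\mf h}$ acts by isometries and preserves $\G(\mf h)$), the function $x\mapsto d_{\G}(x,\G(\mf h))$ is constant on each $G_{\mf e}$--orbit of vertices of $\G(\mf e)$; as there are finitely many such orbits, and finitely many $G$--orbits of edges of $T$, this gives a uniform constant $c_0$ with $\G(\mf e)\cu\mc N_{\G}(\G(\mf h),c_0)$ (here we use $\G(\mf h)\neq\emptyset$, which holds because $\beta$ has its endpoints there). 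If $\beta$ never meets $\G(\mf e)$, then $\beta$ is a connected subset of $\G\setminus\G(\mf e)$ meeting $\G(\mf h)$, so $\beta\cu\G(\mf h)$ by Lemma~\ref{AIS lem}(3), and we are done. Otherwise, let $s_1\leq s_2$ be the first and last parameters at which $\beta$ meets $\G(\mf e)$. The connected sets $\beta([0,s_1))$ and $\beta((s_2,L])$ avoid $\G(\mf e)$ and meet $\G(\mf h)$, hence lie in $\G(\mf h)$, so $\beta|_{[0,s_1]}$ and $\beta|_{[s_2,L]}$ are contained in $\mc N_{\G}(\G(\mf h),1)$. Finally, $\beta|_{[s_1,s_2]}$ is $N'$--Morse for some $N'$ depending only on $N$ by Lemma~\ref{nearby ray}(4), and its endpoints $\beta(s_1),\beta(s_2)$ lie in $\G(\mf e)$, so the edge case gives $\beta|_{[s_1,s_2]}\cu\mc N_{\G}(\G(\mf e),D_1(N'))\cu\mc N_{\G}(\G(\mf h),D_1(N')+c_0)$. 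Thus $\beta\cu\mc N_{\G}(\G(\mf h),D(N))$ with $D(N):=D_1(N')+c_0$.

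I expect the main work to be the bookkeeping for the subgraphs: checking that $\G(\mf e)$ is \emph{uniformly} coarsely contained in $\G(\mf h)$ (which genuinely uses $G_{\mf e}$--cocompactness of $\G(\mf e)$ from Lemma~\ref{AIS lem}(1) and the equality $G_{\mf e}=G_{\mf h}$), arranging that all constants are uniform over the finitely many $G$--orbits of edges of $T$, and the routine but fiddly manipulations with half-open edges when invoking the separation property of $\G(\mf e)$. Note that this lemma does not require the hypothesis $(\partial_ME,G)=\emptyset$ from Assumption~\ref{ass3}.
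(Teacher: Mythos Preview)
Your proof is correct and follows essentially the same approach as the paper. The edge case is identical: both use that the subgraphs $\G(\mf e)$ are uniformly quasi-isometrically embedded (via undistortion of edge stabilisers and finiteness of $G$--orbits of edges), then apply Lemma~\ref{nearby ray}(1). For the halfspace case, the paper compresses the reduction to a single sentence citing Lemma~\ref{nearby ray}(4) and Lemma~\ref{AIS lem}(3); your write-up simply unpacks that sentence, in particular making explicit the coarse containment $\G(\mf e)\cu\mc N_{\G}(\G(\mf h),c_0)$ (via $G_{\mf e}$--cocompactness of $\G(\mf e)$ and $G_{\mf e}$--invariance of $\G(\mf h)$) that the paper leaves implicit.
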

\begin{proof}
By Lemma~\ref{nearby ray}(4) and Lemma~\ref{AIS lem}(3), the statement about $\G(\mf{h})$ follows from that on $\G(\mf{e})$.

Let $\mf{e}_1,\dots,\mf{e}_k$ be representatives for the orbits of $G\acts\E(T)$ (recall Remark~\ref{finite graph of groups}). Since edge-stabilisers are undistorted, there exists a constant $L$ such that any two points of $\G(\mf{e}_i)$ can be connected by a path entirely contained in $\G(\mf{e}_i)$ that is an $L$--quasi-geodesic in $\G$.

Now, suppose then that $\beta$ has endpoints $x,y\in\G(\mf{e})$. There exist $g\in G$ and $1\leq i\leq k$ such that $\G(\mf{e})=g\G(\mf{e}_i)$. Thus, there exists an $L$--quasi-geodesic $\alpha\cu\G$ connecting $x$ and $y$ within $\G(\mf{e})$. By Lemma~\ref{nearby ray}(1), the Hausdorff distance between $\beta$ and $\alpha$ is at most $D(N)$, where $D(N)$ depends only on $N$ and $L$. Since $\alpha\cu\G(\mf{e})$, this proves the lemma.
\end{proof}

\begin{cor}\label{choose a side}
Let $\g\cu\G$ be a Morse geodesic ray. Then:
\begin{enumerate}
\item for every edge $\mf{e}\in\E(T)$, the intersection $\g\cap\G(\mf{e})$ is compact; 
\item for every halfspace $\mf{h}\in\mc{H}(T)$, a sub-ray of $\g$ is entirely contained in either $\G(\mf{h})$ or $\G(\mf{h}^*)$;
\item if a sub-ray of $\g$ is contained in $\G(\mf{h})$, then $\g$ must get arbitrarily far from $\G(\mf{h}^*)$.
\end{enumerate}
\end{cor}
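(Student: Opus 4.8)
The plan is to reduce all three statements to a single observation:

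\emph{no Morse sub-ray of $\g$ can be contained in a bounded neighbourhood $\mc{N}_{\G}(\G(\mf{e}),R)$ of any $\G(\mf{e})$.}

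This is exactly where Assumption~\ref{ass3} enters. To prove it, I would first note that $G_{\mf{e}}$ acts properly and cocompactly on the connected, locally finite graph $\G(\mf{e})$, so $\G(\mf{e})$ with its intrinsic path metric is quasi-isometric to $G_{\mf{e}}$; since $G_{\mf{e}}$ is moreover undistorted in $G$, the inclusion $\G(\mf{e})\hookrightarrow\G$ is a quasi-isometric embedding. Hence, if a sub-ray $\g'\cu\g$ lay in $\mc{N}_{\G}(\G(\mf{e}),R)$, coarse nearest-point projection to $\G(\mf{e})$ would produce a quasi-geodesic ray of $\G(\mf{e})$ at finite Hausdorff distance from $\g'$ in $\G$. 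As $\g'$ is Morse in $\G$ (it is a sub-ray of the Morse ray $\g$, by Lemma~\ref{nearby ray}(4)), Lemma~\ref{nearby ray} together with the fact that $\G(\mf{e})\hookrightarrow\G$ is a quasi-isometric embedding lets us straighten this to a Morse geodesic ray of $\G(\mf{e})$ whose image in $\G$ is still Morse; transporting it through the quasi-isometry $\G(\mf{e})\simeq G_{\mf{e}}$ exhibits a point of $(\partial_MG_{\mf{e}},G)$, contradicting $(\partial_MG_{\mf{e}},G)=\emptyset$.

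Granting this, part~(1) is immediate: the subgraph $\G(\mf{e})$ is closed in $\G$, so $\g\cap\G(\mf{e})$ is closed in $\g$ and hence compact if and only if it is bounded. If it were unbounded, choose $\g(t_n)\in\G(\mf{e})$ with $t_n\nearrow+\infty$. Each segment $\g|_{[t_1,t_n]}$ has both endpoints in $\G(\mf{e})$ and is $N'$--Morse for a single Morse gauge $N'$ (Lemma~\ref{nearby ray}(4)), so Lemma~\ref{stay close to w} provides a constant $D=D(N')$ with $\g|_{[t_1,t_n]}\cu\mc{N}_{\G}(\G(\mf{e}),D)$ for all $n$. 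Taking the union over $n$, the sub-ray $\g|_{[t_1,+\infty)}$ lies in $\mc{N}_{\G}(\G(\mf{e}),D)$, contradicting the observation. For part~(2), let $\mf{e}$ be the edge with halfspaces $\mf{h},\mf{h}^*$. By part~(1) there is $T$ with $\g|_{[T,+\infty)}\cap\G(\mf{e})=\emptyset$, so this sub-ray is a connected subset of $\G\setminus\G(\mf{e})=\G(\mf{h})\sqcup\G(\mf{h}^*)$. By Lemma~\ref{AIS lem}(3) no edge of $\G$ outside $\G(\mf{e})$ has its two endpoints on opposite sides of $\mf{e}$, and an elementary check with the definition of $\G(\mf{h})$ then shows that $\G(\mf{h})$ and $\G(\mf{h}^*)$ are open in $\G\setminus\G(\mf{e})$, hence unions of its connected components; therefore $\g|_{[T,+\infty)}$ lies entirely in $\G(\mf{h})$ or entirely in $\G(\mf{h}^*)$.

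Finally, for part~(3), suppose a sub-ray $\g|_{[T,+\infty)}\cu\G(\mf{h})$, but that $\g$ — and hence also $\g|_{[T,+\infty)}$ — stays within some distance $R$ of $\G(\mf{h}^*)$. Since by construction $\G(\mf{h})\cu\mc{N}_{\G}(f_p^{-1}(\mf{h}),1)$ and $\G(\mf{h}^*)\cu\mc{N}_{\G}(f_p^{-1}(\mf{h}^*),1)$, we obtain $\g|_{[T,+\infty)}\cu\mc{N}_{\G}(f_p^{-1}(\mf{h}),R+1)\cap\mc{N}_{\G}(f_p^{-1}(\mf{h}^*),R+1)$, which by Lemma~\ref{AIS lem}(4) is contained in $\mc{N}_{\G}(\G(\mf{e}),R+1)$. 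This contradicts the observation, completing the proof. The one genuinely delicate step is the observation itself — concretely, checking that a Morse ray trapped in a bounded neighbourhood of $\G(\mf{e})$ descends to a Morse ray of $G_{\mf{e}}$ that remains Morse in $G$; everything else is bookkeeping with the partition $\G=\G(\mf{h})\sqcup\G(\mf{e})\sqcup\G(\mf{h}^*)$ and the neighbourhood containments in Lemma~\ref{AIS lem}.
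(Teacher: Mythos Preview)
Your proof is correct and follows essentially the same approach as the paper's: both derive part~(1) from Lemma~\ref{stay close to w} together with the assumption $(\partial_MG_{\mf{e}},G)=\emptyset$, and then obtain parts~(2) and~(3) from properties~(3) and~(4) of Lemma~\ref{AIS lem}, respectively. The paper is simply terser, asserting the contradiction with $(\partial_MG_{\mf{e}},G)=\emptyset$ in one line without spelling out the projection argument that you give for your ``observation''.
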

\begin{proof}
If $\g\cap\G(\mf{e})$ were non-compact, it would be unbounded and Lemma~\ref{stay close to w} would show that $\g$ stays at bounded distance from $\G(\mf{e})$. This would contradict the assumption that $(\partial_MG_{\mf{e}},G)=\emptyset$. This proves part~(1). Parts~(2) and~(3) then follow, respectively, from properties~(3) and~(4) in Lemma~\ref{AIS lem}.
\end{proof}

By Corollary~\ref{choose a side}(3), which of the sets $\G(\mf{h})$ and $\G(\mf{h}^*)$ contains a sub-ray of $\g$ does not change if we replace $\g$ with a ray at finite Hausdorff distance. This leads us to consider the following (well-defined) subset of $\partial_MG$ for each $\mf{h}\in\H(T)$:
\[M(\mf{h})=\{[\g]\in\partial_MG \mid \text{$\g$ is eventually contained in $\G(\mf{h})$}\}.\]

\begin{lem}\label{clopen partition}
For every $\mf{h}\in\H(T)$, we have a partition $\partial_MG=M(\mf{h})\sqcup M(\mf{h}^*)$ into closed subsets.
\end{lem}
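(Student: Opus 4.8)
\textbf{Proof plan for Lemma~\ref{clopen partition}.}

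The plan is to show that $M(\mf{h})$ and $M(\mf{h}^*)$ are disjoint, cover $\partial_MG$, and are each closed; the symmetry $\mf{h}\leftrightarrow\mf{h}^*$ reduces the closedness claim to a single one. Disjointness and covering are immediate from Corollary~\ref{choose a side}(2)--(3): every Morse geodesic ray has a sub-ray contained in exactly one of $\G(\mf{h})$, $\G(\mf{h}^*)$, so $\partial_MG=M(\mf{h})\sqcup M(\mf{h}^*)$ as sets. The only real content is showing $M(\mf{h})$ is closed, i.e.\ (by definition of the direct limit topology) that $M(\mf{h})\cap\partial_M^N\G_p$ is closed for every Morse gauge $N$. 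Since $\partial_M^N\G_p$ is compact and metrisable, it suffices to check that $M(\mf{h})\cap\partial_M^N\G_p$ is sequentially closed.

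So I would take $N$--Morse geodesic rays $\g_n$ based at $p$ with $[\g_n]\in M(\mf{h})$ and $[\g_n]\to\xi$ in $\partial_M^N\G_p$, represented by an $N$--Morse ray $\g$ based at $p$; I must show $[\g]\in M(\mf{h})$, equivalently $[\g]\notin M(\mf{h}^*)$. Suppose for contradiction $[\g]\in M(\mf{h}^*)$, so a sub-ray of $\g$ lies in $\G(\mf{h}^*)$; then by Corollary~\ref{choose a side}(3), $\g$ gets arbitrarily far from $\G(\mf{h})$. Fix a parameter $R$ much larger than the constant $D(N)$ from Lemma~\ref{stay close to w}, and pick $t_0$ with $d_{\G}(\g(t_0),\G(\mf{h}))>R$ (using also that $\g$ eventually stays in $\G(\mf{h}^*)$, so one can further arrange $\g$ never returns near $\G(\mf{h})$ past $t_0$, if needed). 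Since $\g_n\to\g$ uniformly on compacta, for large $n$ the point $\g_n(t_0)$ is within $1$ of $\g(t_0)$, hence at distance $>R-1$ from $\G(\mf{h})$, i.e.\ $\g_n(t_0)\notin\G(\mf{h})$. But $\g_n$ is eventually contained in $\G(\mf{h})$; since it is a geodesic based at $p$ and leaves $\G(\mf{h})$ at $\g_n(t_0)$ only to re-enter it forever, a terminal sub-ray of $\g_n$ lies in $\G(\mf{h})$ while $\g_n(t_0)$ lies in $\G(\mf{h}^*)\cup\G(\mf{e})$, so $\g_n$ must cross $\G(\mf{e})$ after time $t_0$; combined with $\g_n$ starting inside whichever side $p$ lies in, one sees $\g_n(t_0)$ lies in the $D(N)$--neighbourhood of $\G(\mf{e})$ (apply Lemma~\ref{stay close to w} to the sub-segment of $\g_n$ between its last visit to $\G(\mf{e})$ before $t_0$ and its first visit after, both of which exist). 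This forces $d_{\G}(\g(t_0),\G(\mf{e}))\leq D(N)+1$. Since this holds for a sequence of values $t_0\to\infty$ (the reasoning applies to every large parameter), the ray $\g$ meets $\mc{N}_{\G}(\G(\mf{e}),D(N)+1)$ in an unbounded set, so $\g\cap\G(\mf{e})$ is non-compact after fattening — more precisely, by Lemma~\ref{AIS lem}(4) and the fact that $\g$ is within bounded distance of $\G(\mf{e})$ at unboundedly many times, one contradicts Corollary~\ref{choose a side}(1) (or directly the hypothesis $(\partial_MG_{\mf{e}},G)=\emptyset$ via Lemma~\ref{stay close to w}).

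The main obstacle I anticipate is the bookkeeping in the middle step: correctly arguing that a point of $\g_n$ far from $\G(\mf{h})$, sandwiched on a ray that is eventually in $\G(\mf{h})$, must be close to $\G(\mf{e})$. The clean way is to use the partition $\G=\G(\mf{h})\sqcup\G(\mf{e})\sqcup\G(\mf{h}^*)$ together with connectedness of $\G(\mf{e})$ and the defining property Lemma~\ref{AIS lem}(3) that every edge crossing between $f_p^{-1}(\mf{h})$ and $f_p^{-1}(\mf{h}^*)$ lies in $\G(\mf{e})$: a geodesic passing from a region near $\G(\mf{h}^*)$ to $\G(\mf{h})$ must traverse $\G(\mf{e})$, and then Lemma~\ref{stay close to w} (applied to the portion of $\g_n$ between two such traversals, which is $N'$--Morse by Lemma~\ref{nearby ray}(4)) keeps that whole portion within $D(N)$ of $\G(\mf{e})$. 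One must also handle the degenerate possibility that $\g_n(t_0)$ already lies in $\G(\mf{e})$ itself, which only helps. Everything else is routine use of Arzelà–Ascoli and the direct limit topology, exactly as in the proof of Lemma~\ref{open map}.
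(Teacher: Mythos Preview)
Your plan has a genuine gap at exactly the spot you flagged as the ``main obstacle''. You assert that $\g_n$ has a last visit to $\G(\mf{e})$ \emph{before} $t_0$, but this can fail: if the chosen basepoint lies in $\G(\mf{h}^*)$ (you never exclude this), then $\g_n$ may stay entirely in $\G(\mf{h}^*)$ on $[0,t_0]$ and only cross $\G(\mf{e})$ afterwards. In that case there is only one traversal, Lemma~\ref{stay close to w} does not apply to any segment containing $\g_n(t_0)$, and the argument stalls. (Also minor: in this section $p$ denotes a vertex of $T$, not a basepoint in $\G$.)

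The fix is to \emph{choose the basepoint inside $\G(\mf{h})$}, and this is precisely what the paper does. Once you do that, your detour through $\G(\mf{e})$ and the entire contradiction argument become unnecessary. With $\g_n(0)\in\G(\mf{h})$ and a sub-ray of $\g_n$ in $\G(\mf{h})$, every point $\g_n(t)$ lies on a finite sub-segment of $\g_n$ with \emph{both} endpoints in $\G(\mf{h})$; Lemma~\ref{stay close to w} (the $\G(\mf{h})$ clause, not the $\G(\mf{e})$ clause) then gives $\g_n\cu\mc{N}_{\G}(\G(\mf{h}),D(N))$ outright. Passing to the uniform limit, $\g\cu\mc{N}_{\G}(\G(\mf{h}),D(N))$, and Corollary~\ref{choose a side}(3) immediately forces $[\g]\in M(\mf{h})$. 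No need to locate $\g$ near $\G(\mf{e})$ at unbounded times, and no need for the somewhat delicate final step (which, as written, would itself require extending Lemma~\ref{stay close to w} to endpoints merely \emph{near} $\G(\mf{e})$).
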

\begin{proof}
From the above discussion, it is clear that $M(\mf{h})$ and $M(\mf{h}^*)$ are disjoint and cover $\partial_MG$. We only need to prove that they are closed. Choosing some $x\in\G(\mf{h})$ as basepoint, it suffices to show that $M(\mf{h})\cap\partial_M^N\G_x$ is sequentially closed for every Morse gauge $N$. 

Let $\g_n\cu\G$ be a sequence of $N$--Morse geodesic rays based at $x$ that converge uniformly on compact sets to an $N$--Morse geodesic ray $\g\cu\G$. If $[\g_n]\in M(\mf{h})$ for every $n$, then Lemma~\ref{stay close to w} guarantees that the $\g_n$ are all contained in the $D(N)$--neighbourhood of $\G(\mf{h})$. The same neighbourhood must then contain $\g$ and, by Corollary~\ref{choose a side}(3), we conclude that $[\g]\in M(\mf{h})$.
\end{proof}

If $\xi\in\partial_MG$, we introduce the following subset of $\H(T)$:
\begin{align*}
\s(\xi):=&~\{\mf{h}\in\H(T) \mid \xi\in M(\mf{h})\} \\
=&~\{\mf{h}\in\H(T) \mid \text{if $\xi=[\g]$, then $\g$ is eventually contained in $\G(\mf{h})$}\}.
\end{align*}
Note that $\s(\xi)$ satisfies the following two properties:
\begin{enumerate}
\item if $\mf{e}\in\E(T)$, then exactly one of the two halfspaces determined by $\mf{e}$ lies in $\s(\xi)$;
\item if $\mf{h}_1,\mf{h}_2\in\s(\xi)$, then $\mf{h}_1\cap\mf{h}_2\neq\emptyset$.
\end{enumerate}
A subset of $\H(T)$ with these properties is known as an \emph{ultrafilter} \cite[Definition~2.1]{Sageev-notes}.

For every vertex $x\in T^{(0)}$, an important example of ultrafilter is the set:
\[\s_x:=\{\mf{h}\in\H(T) \mid x\in\mf{h}\}.\]
Conversely, every ultrafilter $\s\cu\H(T)$ that does not contain infinite descending chains of halfspaces is of the form $\s_x$ for a vertex $x\in T^{(0)}$ (see for instance \cite[Proposition~2.1]{Sageev-notes}).

\begin{rmk}\label{same ultrafilter rmk}
If $\xi,\eta\in\partial_MG$ are in the same connected component, then $\s(\xi)=\s(\eta)$. Indeed, Lemma~\ref{clopen partition} implies that, for every $\mf{h}\in\H(T)$, we have either $\{\xi,\eta\}\cu M(\mf{h})$ or $\{\xi,\eta\}\cu M(\mf{h}^*)$.
\end{rmk}

\begin{lem}\label{line ultrafilter}
Let $\alpha\cu\G$ be an $N$--Morse geodesic line whose endpoints at infinity $\alpha^{\pm}\in\partial_MG$ lie in the same connected component of $\partial_MG$. Set $\s:=\s(\alpha^+)$, which coincides with $\s(\alpha^-)$ by Remark~\ref{same ultrafilter rmk}. Then, the following hold:
\begin{enumerate}
\item $\alpha$ is contained in the intersection of the $D(N)$--neighbourhoods of the subsets $\G(\mf{h})\cu\G$ with $\mf{h}\in\s$ (here $D(N)$ is the constant introduced in Lemma~\ref{stay close to w});
\item there exists a vertex $x\in T$ such that $\s=\s_x$;
\item $\alpha$ stays at bounded distance from the stabiliser $G_x\leq G$.
\end{enumerate}
\end{lem}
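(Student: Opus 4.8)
For part (1), fix $\mf{h}\in\s$. Since $\alpha^+,\alpha^-\in M(\mf{h})$, the two sub-rays of $\alpha$ are eventually contained in $\G(\mf{h})$, so $\alpha$ has a compact sub-segment $\alpha|_{[a,b]}$ with $\alpha(a),\alpha(b)\in\G(\mf{h})$ and $\alpha\setminus\alpha|_{[a,b]}\cu\G(\mf{h})$. By Lemma~\ref{nearby ray}(4) the segment $\alpha|_{[a,b]}$ is $N'$--Morse for some $N'$ depending only on $N$, so Lemma~\ref{stay close to w} puts it in the $D(N)$--neighbourhood of $\G(\mf{h})$; adding the two tails gives $\alpha\cu\mc{N}_{\G}(\G(\mf{h}),D(N))$. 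Intersecting over $\mf{h}\in\s$ yields (1).

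For part (2), recall from the discussion preceding the lemma that $\s$ is an ultrafilter, and that an ultrafilter containing no infinite strictly descending chain of halfspaces equals $\s_x$ for a vertex $x\in T$. So it suffices to exclude a chain $\mf{h}_1\supsetneq\mf{h}_2\supsetneq\dots$ inside $\s$. Tree geometry shows that the defining edges satisfy $\mf{e}_{n-1}\cu\mf{h}_n^*$ and $\mf{e}_{n+1}\cu\mf{h}_n$, so $\mf{e}_n$ separates $\mf{e}_{n-1}$ from $\mf{e}_{n+1}$; hence the $\mf{e}_n$ run along a geodesic ray of $T$ and (a short argument) $d_T(p,\mf{h}_n)\ra+\infty$. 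But part~(1) gives $\alpha\cu\mc{N}_{\G}(\G(\mf{h}_n),D(N))$ for all $n$, and since $f_p$ is coarsely Lipschitz and $\G(\mf{h}_n)\cu\mc{N}_{\G}(f_p^{-1}(\mf{h}_n),1)$, this forces $f_p(\alpha)\cu\mc{N}_T(\mf{h}_n,D')$ for a fixed $D'$ and every $n$ --- impossible, as $f_p(\alpha)$ is a non-empty subset of $T$ containing the fixed point $f_p(\alpha(0))$ while $d_T(p,\mf{h}_n)\ra+\infty$. Hence $\s=\s_x$.

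Part~(3) is the substantial one. First I would sharpen (1): for any vertex $y\neq x$, the $x$--side halfspace of the first edge of $[x,y]$ lies in $\s_x$ and has $y$ at distance $d_T(x,y)$ from it, so (1) forces $f_p(\alpha)\cu\mc{B}_T(x,D')$ for a constant $D'=D'(N)$ --- i.e.\ $\alpha$ is bounded in the tree direction. Crucially, applying (1) to the \emph{minimal} halfspaces of $\s_x$ --- the $x$--sides of the edges $\mf{f}$ incident to $x$ --- shows that $\alpha$ is eventually contained, at both ends, in $\G(\mf{h}_{\mf{f}})$ for each such $\mf{f}$; by Corollary~\ref{choose a side}(1) (which uses $(\partial_MG_{\mf{f}},G)=\emptyset$) each $\G(\mf{f})$ meets $\alpha$ in a compact set, and by a uniform version of Lemma~\ref{stay close to w} for the (uniformly Morse) sub-segments involved, the time $\alpha$ spends near any single $\G(\mf{f})$ is bounded by a constant $D_1=D_1(N)$. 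Finally, the path-decomposition of Lemma~\ref{path decomposition} at the vertex $x$ cuts any sub-segment of $\alpha$ with endpoints on $G_x$ into pieces each having endpoints near a coset of an edge group at $x$; each such piece is therefore a bounded excursion past some $\G(\mf{f})$ of length $\leq D_1$ with endpoints near $G_x$, hence lies within a bounded neighbourhood of $G_x$. Since $\alpha$ makes only such bounded excursions away from $\mc{N}_{\G}(G_x,R_0)$, it stays in $\mc{N}_{\G}(G_x,R_0+D_1)$, proving (3).

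The main obstacle is exactly this last conversion, from ``$\alpha$ Morse and tree-bounded near $x$'' to ``$\alpha$ stays $\G$--boundedly near $G_x$''. It genuinely needs both hypotheses: without the Morse condition the conclusion fails (already in $F_2\x\Z$, where geodesic rays can hug a vertex of the Bass--Serre tree while drifting arbitrarily far from every vertex group), and the identity $\s(\alpha^+)=\s(\alpha^-)=\s_x$ is precisely what prevents $\alpha$ from escaping past an edge at $x$ into a neighbouring vertex group's neighbourhood. The delicate point is making the excursion bookkeeping uniform over the (possibly infinitely many) edges incident to $x$; Lemma~\ref{AIS lem}(5) is what keeps this locally finite along $\alpha$.
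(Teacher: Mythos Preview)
Parts~(1) and~(2) are fine. Your argument for~(2) is a valid alternative to the paper's: you use that an infinite descending chain $\mf{h}_1\supsetneq\mf{h}_2\supsetneq\dots$ in $\s$ must satisfy $d_T(p,\mf{h}_n)\to\infty$, and then the Lipschitz orbit map $f_p$ contradicts $\alpha\cu\mc{N}_{\G}(\G(\mf{h}_n),D(N))$. The paper instead fixes a vertex $g\in\alpha$ and shows $\s\setminus\s_{gp}$ is \emph{finite}: for $\mf{h}\in\s\setminus\s_{gp}$ one has $g\in f_p^{-1}(\mf{h}^*)\cap\mc{N}_{\G}(\G(\mf{h}),D(N))$, so Lemma~\ref{AIS lem}(4) forces $\G(\mf{e}_{\mf{h}})$ to meet $\mc{B}_{\G}(g,D(N))$, and Lemma~\ref{AIS lem}(5) makes this happen for only finitely many edges.

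Part~(3), however, has a real gap. Your plan applies Lemma~\ref{path decomposition} to sub-segments of $\alpha$ \emph{with endpoints on $G_x$}, decomposes them into pieces with endpoints near edge-group cosets in $G_x$, and then bounds each piece by your excursion constant $D_1(N)$. But you never show that $\alpha$ meets $G_x$ (or even a fixed neighbourhood of it) at an unbounded set of times; the tree-boundedness $f_p(\alpha)\cu\mc{B}_T(x,D')$ does not give this, since $f_p^{-1}(\mc{B}_T(x,D'))$ can be arbitrarily far from $G_x$ in $\G$. Without such anchor points the path-decomposition step cannot start, and the conclusion ``$\alpha$ makes only bounded excursions away from $\mc{N}_{\G}(G_x,R_0)$'' assumes what is to be proved. (Separately, your justification for $D_1$ is not ``a uniform version of Lemma~\ref{stay close to w}''; that lemma says nothing about lengths. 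The bound you want comes instead from a compactness argument using $(\partial_ME,G)=\emptyset$.)

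The paper's route to~(3) is both shorter and avoids this circularity. Let $\tau_x\cu\s_x$ be the halfspaces whose defining edge is incident to $x$, and set
\[
\Om:=\bigcap_{\mf{h}\in\tau_x}\mc{N}_{\G}(\G(\mf{h}),D(N)).
\]
Part~(1) gives $\alpha\cu\Om$, and $\Om$ is $G_x$--invariant. The point is that $G_x$ acts \emph{cocompactly} on $\Om$: indeed $f_p^{-1}(x)$ is either empty or a single $G_x$--orbit, while any vertex $y\in\Om\setminus f_p^{-1}(x)$ lies in $f_p^{-1}(\mf{h}^*)\cap\mc{N}_{\G}(\G(\mf{h}),D(N))$ for some $\mf{h}\in\tau_x$, hence in $\mc{N}_{\G}(\G(\mf{e}),D(N))$ by Lemma~\ref{AIS lem}(4); since there are finitely many $G_x$--orbits of edges $\mf{e}$ at $x$ and each $G_{\mf{e}}\leq G_x$ acts cocompactly on $\G(\mf{e})$, this gives finitely many $G_x$--orbits on $\Om^{(0)}$. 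Cocompactness yields $d_{\mathrm{Haus}}(\Om,G_x)<\infty$, so $\alpha$ stays at bounded distance from $G_x$. Note that this argument uses neither Lemma~\ref{path decomposition} nor any excursion bound, and it never needs $\alpha$ to visit $G_x$.
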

\begin{proof}
Part~(1) is immediate from Lemma~\ref{stay close to w}. We will prove part~(2) by showing that $\s$ does not contain any infinite descending chains of halfspaces.

Fix a vertex $g\in\alpha\cu\G$. Consider the ultrafilter $\s_{gp}\cu\H(T)$ determined by the vertex $gp\in T$. If $\mf{h}\in\s\setminus\s_{gp}$, then $g\in f_p^{-1}(\mf{h}^*)\cap\alpha$, while $\alpha$ is contained in the $D(N)$--neighbourhood of $\G(\mf{h})$ by part~(1). If $\mf{e}$ is the edge corresponding to $\mf{h}$, Lemma~\ref{AIS lem}(4) guarantees that $\G(\mf{e})$ meets the $D(N)$--ball around $g$. By Lemma~\ref{AIS lem}(5), the latter can occur only for finitely many edges of $T$. 

This shows that the set $\s\setminus\s_{gp}$ is finite. Since $\s_{gp}$ does not contain infinite descending chains, neither does $\s$. This proves part~(2).

Finally, let us prove part~(3). Let $x\in T$ be the vertex provided by part~(2). Let $\tau_x\cu\s_x$ be the subset of halfspaces whose corresponding edge of $T$ is incident to $x$. Let $\Om\cu\G$ be the intersection of the $D(N)$--neighbourhoods of the sets $\G(\mf{h})$ with $\mf{h}\in\tau_x$. Part~(1) guarantees that $\alpha\cu\Om$. Since $G_x$ fixes $x$, it leaves invariant $\tau_x$, and thus it also leaves invariant $\Om\cu\G$. In order to prove part~(3), it suffices to show that the action $G_x\acts\Om$ is cocompact. This implies that the Hausdorff distance between $\Om$ and $G_x$ is finite, so $\alpha$ then stays at bounded distance from $G_x$.

Observe that $\{x\}=\bigcap_{\mf{h}\in\tau_x}\mf{h}$, hence $f_p^{-1}(x)=\bigcap_{\mf{h}\in\tau_x}f_p^{-1}(\mf{h})$. The latter is either empty (if $x$ is not in the same $G$--orbit as our basepoint $p\in T$), or a single $G_x$--orbit. Thus, it suffices to show that $G_x$ acts with finitely many orbits on the difference $\Om^{(0)}\setminus f_p^{-1}(x)$.

For every vertex $y\in\Om^{(0)}\setminus f_p^{-1}(x)$, there exists $\mf{h}\in\tau_x$ such that $y$ lies in the intersection of $f_p^{-1}(\mf{h}^*)$ and the $D(N)$--neighbourhood of $\G(\mf{h})$. If $\mf{e}$ is the edge corresponding to $\mf{h}$, Lemma~\ref{AIS lem}(4) shows that $y$ lies in the $D(N)$--neighbourhood of $\G(\mf{e})$. Recall that there are only finitely many $G_x$--orbits of edges $\mf{e}$ corresponding to elements of $\tau_x$, that these edges satisfy $G_{\mf{e}}\leq G_x$, and that $G_{\mf{e}}$ acts cocompactly on $\G(\mf{e})$. This shows that $G_x$ acts with finitely many orbits on $\Om^{(0)}\setminus f_p^{-1}(x)$, as required.
\end{proof}

\begin{cor}\label{component cor}
If a connected component $\mc{C}\cu\partial_MG$ is not a singleton, then there exists a vertex $x\in T$ such that $\mc{C}$ is contained in the image of the natural inclusion $(\partial_MG_x,G)\hookrightarrow\partial_MG$.
\end{cor}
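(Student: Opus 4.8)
The plan is to reduce to Lemma~\ref{line ultrafilter} by producing, for each pair of distinct points in the component $\mc{C}$, a Morse bi-infinite geodesic line asymptotic to them, and then checking that the vertex $x$ obtained from Lemma~\ref{line ultrafilter} does not depend on the chosen pair. First I would fix distinct points $\xi,\eta\in\mc{C}$ and represent them by Morse geodesic rays $\g_{\xi},\g_{\eta}$ based at a common point. A standard argument (concatenating the two rays and taking a limit, using that each is individually Morse together with Lemma~\ref{nearby ray}) produces an $N$--Morse bi-infinite geodesic line $\alpha$ with $\alpha^{+}=\xi$ and $\alpha^{-}=\eta$: this is the point where one must be slightly careful, since two Morse rays need not span a Morse line in general, but here one uses that a bi-infinite geodesic between two distinct Morse boundary points always exists and is Morse (this is standard for the direct-limit Morse boundary; alternatively one can invoke that $\partial_M\G$ with its bordification is such that distinct points are joined by Morse lines — see \cite{Cordes}). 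Since $\xi,\eta$ lie in the same component $\mc{C}$, Remark~\ref{same ultrafilter rmk} gives $\s(\xi)=\s(\eta)$, so Lemma~\ref{line ultrafilter} applies to $\alpha$ and yields a vertex $x=x(\xi,\eta)\in T$ with $\s(\xi)=\s_{x}$, and moreover $\alpha$ stays at bounded distance from $G_{x}$.

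Next I would observe that the vertex $x$ depends only on the common ultrafilter $\s(\xi)=\s(\eta)$, not on the choice of line $\alpha$: indeed $x$ is characterised as the unique vertex of $T$ with $\s_{x}=\s(\xi)$, which exists because (by the argument inside the proof of Lemma~\ref{line ultrafilter}(2)) $\s(\xi)$ contains no infinite descending chain of halfspaces, and is unique because distinct vertices of $T$ determine distinct ultrafilters $\s_{x}$. Since every point $\zeta\in\mc{C}$ has $\s(\zeta)=\s(\xi)$ by Remark~\ref{same ultrafilter rmk} (applied along the connected set $\mc{C}$), the same vertex $x$ works for all of $\mc{C}$ simultaneously.

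It then remains to show $\zeta\in(\partial_MG_{x},G)$ for every $\zeta\in\mc{C}$, i.e.\ that each $\zeta$ is represented by a Morse ray staying at bounded distance from the coset $G_{x}\cu\G$ (equivalently from $f_p^{-1}(x)$ up to bounded error). For a point $\zeta\neq\xi$ this is immediate: apply the previous two paragraphs to the pair $(\xi,\zeta)$ to get a Morse line $\alpha_{\zeta}$ with $\alpha_{\zeta}^{+}=\zeta$ and $\alpha_{\zeta}^{-}=\xi$; by uniqueness this line is controlled by the same vertex $x$, so Lemma~\ref{line ultrafilter}(3) gives that $\alpha_{\zeta}$ — hence its forward sub-ray representing $\zeta$ — lies in a bounded neighbourhood of $G_{x}$. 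Since $V:=G_{x}$ is undistorted in $G$ by Corollary~\ref{undistorted vertex groups cor}, a ray in $\G$ fellow-travelling $G_{x}$ corresponds to a Morse ray in (a Cayley graph of) $G_{x}$ that remains Morse in $\G$, so $\zeta$ lies in the image of $(\partial_MG_{x},G)\hookrightarrow\partial_MG$; and $\xi$ itself is handled by running the argument with the roles of $\xi$ and some other point of $\mc{C}$ swapped (here we use that $\mc{C}$ is not a singleton, so such a point exists). The main obstacle I anticipate is the first step — cleanly producing the Morse bi-infinite geodesic line joining two distinct Morse boundary points with a Morse gauge controlled by the two rays — but this is a known feature of Morse boundaries and can be quoted or proved via Arzel\`a--Ascoli together with Lemma~\ref{nearby ray}; everything after that is bookkeeping with ultrafilters and undistortedness.
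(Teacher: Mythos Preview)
Your proposal is correct and follows essentially the same route as the paper's proof: obtain a common ultrafilter $\s$ for all of $\mc{C}$ via Remark~\ref{same ultrafilter rmk}, join pairs of points by Morse bi-infinite geodesics, and apply Lemma~\ref{line ultrafilter}(2)--(3) to pin down a single vertex $x$ and show each point is represented by a ray near $G_x$. The ``main obstacle'' you anticipate is dispatched in the paper by a direct citation to \cite[Proposition~3.11]{Cordes}, and your final step invoking undistortedness of $G_x$ (Corollary~\ref{undistorted vertex groups cor}) to pass from ``ray at bounded distance from $G_x$'' to ``point in the image of $(\partial_MG_x,G)$'' is implicit in the paper but good to make explicit.
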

\begin{proof}
By Remark~\ref{same ultrafilter rmk}, there exists an ultrafilter $\s\cu\H(T)$ such that $\s=\s(\xi)$ for all $\xi\in\mc{C}$. Any two points of $\mc{C}$ are endpoints at infinity of a Morse geodesic line in $\G$, by \cite[Proposition~3.11]{Cordes}. Thus, Lemma~\ref{line ultrafilter}(2) shows that $\s=\s_x$ for some $x\in T^{(0)}$, and Lemma~\ref{line ultrafilter}(3) guarantees that every point of $\mc{C}$ is represented by a ray at bounded distance from $G_x$.
\end{proof}

Along with the following observation, Corollary~\ref{component cor} immediately implies Theorem~\ref{components intro}.

\begin{lem}
If $x,y\in T$ are distinct vertices, then $(\partial_MG_x,G)\cap(\partial_MG_y,G)=\emptyset$. 
\end{lem}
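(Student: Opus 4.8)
The plan is to argue by contradiction, exploiting the fact that a Morse ray cannot remain close to the intersection $G_x\cap G_y$, which is contained in an edge group.

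Suppose some $\eta\in\partial_MG$ lies in the image of both injections $(\partial_MG_x,G)\hookrightarrow\partial_MG$ and $(\partial_MG_y,G)\hookrightarrow\partial_MG$. The vertex groups $G_x,G_y$ are finitely generated (Corollary~\ref{undistorted vertex groups cor}), and the image of $(\partial_MG_x,G)\hookrightarrow\partial_MG$ consists of classes of Morse geodesic rays at finite Hausdorff distance from rays of the form $i_x\o\g$, where $\g$ is a geodesic ray in a Cayley graph of $G_x$. Since such rays $i_x\o\g$ lie in a uniformly bounded neighbourhood of $G_x\cu\G^{(0)}$, I would first pick a Morse geodesic ray $\beta$ representing $\eta$ with $\beta\cu\mc{N}_{\G}(G_x,R)$ for some $R$; as $\beta$ is also at finite Hausdorff distance from some ray contained in a bounded neighbourhood of $G_y$, after enlarging $R$ we may assume $\beta\cu\mc{N}_{\G}(G_y,R)$ as well.

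Next I would show that the overlap $\mc{N}_{\G}(G_x,R)\cap\mc{N}_{\G}(G_y,R)$ lies in a bounded neighbourhood of $G_x\cap G_y$. Set $S:=\{g\in G\mid d_{\G}(1,g)\leq 2R\}$, a finite set. Given $z$ in the overlap, choose $a\in G_x$ and $b\in G_y$ within distance $R$ of $z$; then $s:=a^{-1}b\in S$ and $a\in G_x\cap G_ys^{-1}$. A direct check shows that, for each fixed $s$, the set $G_x\cap G_ys^{-1}$ is either empty or a single left coset of $G_x\cap G_y$, so $a$ ranges over at most $|S|$ left cosets of $G_x\cap G_y$; hence $z\in\mc{N}_{\G}(G_x\cap G_y,R')$ for a constant $R'=R'(R)$. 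Applying this to $\beta$ yields $\beta\cu\mc{N}_{\G}(G_x\cap G_y,R')$. Since $x\neq y$, the tree geodesic $[x,y]\cu T$ contains an edge $\mf{e}$, and any element of $G$ fixing both $x$ and $y$ fixes $[x,y]$ pointwise, hence fixes $\mf{e}$; thus $G_x\cap G_y\leq G_{\mf{e}}$ and $\beta\cu\mc{N}_{\G}(G_{\mf{e}},R')$.

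To conclude, I would note that $\G(\mf{e})$ is at finite Hausdorff distance from $G_{\mf{e}}$ (being connected and $G_{\mf{e}}$--cocompact), so $\beta$ stays within bounded distance of $\G(\mf{e})$; the argument proving Corollary~\ref{choose a side}(1) then shows that such a Morse ray represents a point of $(\partial_MG_{\mf{e}},G)$, contradicting Assumption~\ref{ass3}. (Equivalently, one projects $\beta$ into the undistorted subgroup $G_{\mf{e}}$ and straightens via Lemma~\ref{nearby ray} to obtain a Morse geodesic ray in a Cayley graph of $G_{\mf{e}}$ whose image in $\G$ is Morse.) I do not expect a genuine obstacle here: the entire content is the observation that a ray close to both $G_x$ and $G_y$ must be close to $G_x\cap G_y\leq G_{\mf{e}}$, and the rest is the coset computation together with routine tracking of constants.
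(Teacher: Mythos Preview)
Your argument is correct, with one small slip: $G_x\cap G_ys^{-1}$ is a \emph{right} coset of $G_x\cap G_y$, not a left one (if $a_1,a_2$ lie in it then $a_1a_2^{-1}\in G_x$ and $(a_1s)(a_2s)^{-1}=a_1a_2^{-1}\in G_y$); this does not affect the conclusion, since a right coset in a left-invariant word metric still lies at bounded distance from the subgroup. The paper's proof is rather different and much shorter: it picks a halfspace $\mf{h}$ with $x\in\mf{h}$ and $y\in\mf{h}^*$, and observes that a common boundary point would be represented by asymptotic Morse rays lying in neighbourhoods of $\G(\mf{h})$ and $\G(\mf{h}^*)$ respectively, contradicting Corollary~\ref{choose a side}(3). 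That route leans entirely on the halfspace machinery already built for Theorem~\ref{components intro}, whereas your double-coset computation $\mc{N}(G_x,R)\cap\mc{N}(G_y,R)\cu\mc{N}(G_x\cap G_y,R')$ is a self-contained standard trick, combined with the elementary tree fact $G_x\cap G_y\leq G_{\mf{e}}$. Both ultimately invoke the hypothesis $(\partial_MG_{\mf{e}},G)=\emptyset$; the paper's version is cleaner given the surrounding setup, while yours is portable to situations where the sets $\G(\mf{h})$ and $M(\mf{h})$ have not been introduced.
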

\begin{proof}
Choose a halfspace $\mf{h}\in\mc{H}(T)$ with $x\in\mf{h}$ and $y\in\mf{h}^*$. A point in $(\partial_MG_x,G)\cap(\partial_MG_y,G)$ would be represented by asymptotic Morse rays $r_x,r_y\cu\G$ contained in neighbourhoods of $\G(\mf{h})$ and $\G(\mf{h}^*)$, respectively. This would contradict Corollary~\ref{choose a side}(3), so no such point can exist.
\end{proof}

Corollary~\ref{component cor}, Lemma~\ref{open map} and Corollary~\ref{undistorted vertex groups cor} imply the following, which is Corollary~\ref{cor intro}.

\begin{cor}\label{td cor}
If $(\partial_MG_x,G)$ is totally disconnected for every $x\in T^{(0)}$, then $\partial_MG$ is totally disconnected.
\end{cor}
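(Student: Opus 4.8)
The plan is to derive the total disconnectedness of $\partial_MG$ from the structural description of its connected components established above (Corollary~\ref{component cor}) together with the fact that the inclusions of relative vertex-group boundaries are open, equivalently closed, maps (Lemma~\ref{open map}). Concretely, I would show that every connected component $\mc{C}\cu\partial_MG$ is a singleton, arguing by contradiction: suppose $\mc{C}$ contains at least two points.

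The first step is to confine $\mc{C}$ inside a single vertex group. Since $\mc{C}$ is not a singleton, Corollary~\ref{component cor} provides a vertex $x\in T^{(0)}$ such that $\mc{C}$ lies in the image of the natural injection $f_*\colon(\partial_MG_x,G)\hookrightarrow\partial_MG$ associated with the inclusion $G_x\hookrightarrow G$. At this point I would invoke Corollary~\ref{undistorted vertex groups cor} to record that, under Assumption~\ref{ass3}, the vertex group $G_x$ is finitely generated and undistorted in $G$ — precisely the hypothesis under which Lemma~\ref{open map} applies to $f_*$.

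The second step is a purely topological transfer. By Lemma~\ref{open map}, $f_*$ is a closed map; since $f_*$ is injective, this makes $f_*$ open onto its image $U:=f_*\big((\partial_MG_x,G)\big)$, so that $f_*\colon(\partial_MG_x,G)\to U$ is an open bijection and its inverse $f_*^{-1}\colon U\to(\partial_MG_x,G)$ is continuous. (It is essential to route the argument through the inverse, since $f_*$ itself need not be continuous, as the Example in the introduction shows.) Now $\mc{C}$ is a connected subset of $U$, hence $f_*^{-1}(\mc{C})$ is a connected subset of $(\partial_MG_x,G)$; since the latter is totally disconnected by hypothesis, $f_*^{-1}(\mc{C})$ is a single point, and therefore so is $\mc{C}=f_*\big(f_*^{-1}(\mc{C})\big)$ — contradicting our assumption. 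Thus every connected component of $\partial_MG$ is a singleton, i.e.\ $\partial_MG$ is totally disconnected.

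The step I expect to be the main (and essentially the only) obstacle is this topological transfer: one must be careful not to push $\mc{C}$ forward or to restrict $f_*$ naively, because $f_*$ is discontinuous in general, and the argument succeeds only because Lemma~\ref{open map} lets us upgrade openness/closedness of $f_*$ into continuity of its inverse on the image. Everything else is a bookkeeping combination of the three cited results; one could equivalently phrase the transfer by partitioning $\mc{C}$ into two nonempty relatively closed pieces coming from a clopen separation in $(\partial_MG_x,G)$, using the closedness of $f_*$ directly.
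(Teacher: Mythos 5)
Your proof is correct and follows exactly the route the paper intends: Corollary~\ref{component cor} confines a non-singleton component to the image of $(\partial_MG_x,G)$, Corollary~\ref{undistorted vertex groups cor} makes Lemma~\ref{open map} applicable to that inclusion, and the resulting closed injection has a continuous inverse on its image, which transfers connectedness of $\mc{C}$ back to the totally disconnected space $(\partial_MG_x,G)$ (the paper simply cites these three results without writing out this transfer). The only quibble is your closing parenthetical: total disconnectedness does not in general furnish a clopen partition separating two given points (that would be total separatedness), but that aside is not needed for your main argument.
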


The following is not required in the proof of the main theorems, but it looks like a useful observation (for instance, when studying whether $\partial_MG$ is $\s$--compact).

\begin{rmk}
Recall that we have defined an ultrafilter $\s(\xi)\cu\mc{H}(T)$ for every point $\xi\in\partial_MG$. This yields a map $f\colon\partial_MG \ra \overline T:=T^{(0)}\sqcup\partial T$, where $\partial T$ is the set of ends of $T$. 

Every halfspace $\mf{h}\in\mc{H}(T)$ can be extended to a subset $\overline{\mf{h}}\cu\overline T$ by adding the ends of $T$ that it contains. This yields a partition $\overline T=\overline{\mf{h}}\sqcup\overline{\mf{h}^*}$. It is customary to endow $\overline T$ with the topology having the collection of sets $\overline{\mf{h}}$ with $\mf{h}\in\mc{H}(T)$ as a subbasis.

Lemma~\ref{clopen partition} implies that $f\colon\partial_MG \ra \overline T$ is continuous. Moreover, for every vertex $x\in T$, the proof of Lemma~\ref{line ultrafilter}(3) shows that $f^{-1}(x)$ is exactly the set of points of $\partial_MG$ that are represented by rays in $\G$ at bounded distance from the stabiliser $G_x$. If $\xi\in\partial T$, it is possible to show that $f^{-1}(\xi)$ is either empty or a singleton, but this requires a bit more work.

We emphasise that the sets $T^{(0)}$ and $\partial T$ are neither open nor closed in $\overline T$, since $T$ will normally be locally infinite in our setting.
\end{rmk}

\begin{rmk}
The attentive reader might have noticed that the majority of the proof of Theorem~\ref{components intro} works more generally when $G$ acts cocompactly on a $\CAT$ cube complex and all hyperplane-stabilisers are finitely generated, undistorted and with trivial relative Morse boundary in $G$. We have chosen to restrict to actions on trees (the main case of interest) in order to avoid additional difficulties in the following two spots.
\begin{enumerate}
\item If $G$ acts essentially and cocompactly on a $\CAT$ cube complex with undistorted hyperplane-stabilisers, it is not clear if vertex-stabilisers will have to be undistorted as well (cf.\ Corollary~\ref{undistorted vertex groups cor}). It is possible that this can be shown along the lines of \cite[Theorem~A]{Groves-Manning-impr}.
\item The last paragraph of the proof of Lemma~\ref{line ultrafilter}(3) would be a bit more delicate in a $\CAT$ cube complex. One would probably need to require stabilisers of all \emph{intersections of pairwise-transverse hyperplanes} to be finitely generated.
\end{enumerate}
\end{rmk}

\section{Proof of Theorem~\ref{embedding intro}.}

In order to prove that the Morse boundary of the vertex group $V$ topologically embeds in the Morse boundary of the ambient group $G$, we will show that uniformly Morse rays $\alpha\cu V$ remain uniformly Morse in $G$ (see Corollary~\ref{uniformly Morse} below).

One possible approach to this would consider a quasi-geodesic $\g\cu G$ with endpoints on $\alpha$ and use this to construct a quasi-geodesic $\g'\cu V$ with the same endpoints and still getting roughly as far from $\alpha$ as $\g$. Lemma~\ref{path decomposition} allows us to decompose $\g$ as a concatenation $\g_1\g_2\dots\g_m$, where the endpoints of each $\g_i$ are near a coset of an edge group $v_iE_{j_i}\cu V$. We might thus hope to replace each $\g_i$ with a quasi-geodesic $\g_i'\cu v_iE_{j_i}$ with nearby endpoints, in order to form a quasi-geodesic $\g'\cu V$. Since $\alpha$ is Morse in $V$, the quasi-geodesic $\g'$ would then stay close to $\alpha$ and, if edge groups have trivial relative Morse boundary in $V$, the $\g_i'$ would all have to be quite short. Hence the $\g_i$ would also be short and $\g$ would stay close to $\g'$ and $\alpha$, showing that $\alpha$ is Morse in $G$.

The problem with this strategy is that, even if $\g$ and the $\g_i'$ are quasi-geodesics, there is no guarantee that the concatenation $\g'=\g_1'\g_2'\dots\g_m'$ will be a quasi-geodesic. Instead, we will follow a different approach based on divergence, which is the content of Proposition~\ref{uniform divergence}.

Assuming that $\alpha$ has ``slow'' divergence function in $G$, we will show that it also has slow divergence function in $V$ (which contradicts the assumption that $\alpha$ is Morse in $V$). The proof proceeds along similar lines to the above sketch. If $\g\cu G$ were a short path with endpoints on $\alpha$ avoiding a large ball, then we could decompose $\g$ as $\g_1\g_2\dots\g_m$, and replace each $\g_i$ with a short path in $V$, using the assumption that edge groups are relatively wide in $V$ (in the form of Lemma~\ref{rel as cut pts lem}). The new path $\g'$ still avoids a ball of roughly the same radius. Here, the advantage is that it does not matter whether or not $\g'$ is a quasi-geodesic. Of course, asking that edge groups be relatively wide in $V$ might be stronger than simply asking that edge groups have trivial relative Morse boundary in $V$.

We now give precise proofs, considering the following setting.

\begin{ass}\label{ass2}
Let $G$ be a finitely generated group with a non-elliptic, minimal action without inversions on a simplicial tree $G\acts T$. Fix a base vertex $p\in T$ with stabiliser $V$. We assume that the stabiliser of every edge $\mf{e}\cu T$ incident to $p$ is finitely generated and undistorted in $G$.

Let $D_p$ and $E_1,\dots,E_k\leq V$ be as in Lemma~\ref{path decomposition}. Choose finite generating sets $S_{E_i}\cu E_i$, $S_{V}\cu V$ and $S_G\cu G$, ensuring that:
\[S_{E_1}\cup\dots\cup S_{E_k}\cu S_{V}\cu S_G.\] 
Let $\Theta_i\cu\Delta\cu\G$ be the corresponding Cayley graphs of $E_i$, $V$ and $G$, respectively. We denote their intrinsic path metrics by $d_{\Theta_i}$, $d_{\Delta}$ and $d_{\G}$. 

The inclusions $(\Theta_i,d_{\Theta_i}) \hookrightarrow (\Delta,d_{\Delta}) \hookrightarrow (\G,d_{\G})$ are all $1$--Lipschitz. In view of Lemma~\ref{undistorted vertex groups lem}, we can fix a constant $C\geq 1$ such that they are all $C$--bi-Lipschitz.

We assume that $E_1,\dots,E_k$ are relatively wide in $V$. Let $K\geq 1$ be the maximum of the constants $K_0$ provided by Lemma~\ref{rel as cut pts lem}(3) in relation to the pairs $E_i\leq V$ with $1\leq i\leq k$ and the Cayley graphs $\Theta_i$ and $\Delta$.
\end{ass}

We write $\delta_{\alpha}^{\Delta}(\cdot,\cdot)$ and $\delta_{\alpha}^{\G}(\cdot,\cdot)$ when we need to specify the graph used to compute divergence.

\begin{prop}\label{uniform divergence}
There exist $\eta\in (0,1)$ and $K'$, both depending only on $C,D_p,K$, such that, for every geodesic ray $\alpha\cu(\Delta,d_{\Delta})$ and every $r>K'$, we have:
\[\delta_{\alpha}^{\Delta}(r,\eta\cdot\tfrac{1}{16C}) \leq K'\cdot \delta_{\alpha}^{\G}(r,\tfrac{1}{16C}). \]
\end{prop}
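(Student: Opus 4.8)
\textbf{Proof strategy for Proposition~\ref{uniform divergence}.}
The plan is to take a path in $\G$ that witnesses the divergence $\delta_\alpha^\G(r,\tfrac{1}{16C})$ — i.e.\ a short path $\g$ joining $\alpha(s-r)$ and $\alpha(s+r)$ inside $\G\setminus\mc{B}_\G(\alpha(s),\eps r)$ for suitable $s\geq r$ and $\eps=\tfrac{1}{16C}$ — and rebuild it as a path in $\Delta$ that is not much longer and still avoids a ball in $\Delta$ of radius comparable to $r$. The key observation is that the endpoints $\alpha(s\pm r)$ lie in $V$ (since $\alpha\cu\Delta$ and the vertices of $\Delta$ are exactly $V$), so Lemma~\ref{path decomposition} applies: decompose $\g=\g_1\g_2\cdots\g_m$ where the endpoints of each $\g_i$ are within $D_p$ (in $\G$) of a coset $v_iE_{j_i}$ with $v_i\in V$. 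For each $i$, I would use undistortedness (the constant $C$) to move the endpoints of $\g_i$ into the coset $v_iE_{j_i}$ itself at bounded cost, and then replace $\g_i$ by a path $\g_i'$ inside $\Delta$ whose length is controlled: I would push $\g_i$ forward to a $C$--quasi-geodesic $\wh\g_i\cu\G$ of length $\leq C|\g_i|$ with the same (adjusted) endpoints in the coset, and apply Lemma~\ref{rel as cut pts lem}(2) (via (3), which is what Assumption~\ref{ass2} records with constant $K$) to the pair $E_{j_i}\leq V$: this gives an edge path in $\Delta$ of length $\leq K\cdot(\text{half-length of }\wh\g_i)$ that avoids a ball in $\Delta$ of radius $\tfrac{1}{K}\cdot(\text{half-length of }\wh\g_i)$ around the midpoint of $\wh\g_i$. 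The concatenation $\g'=\g_1'\g_2'\cdots\g_m'$ then has $d_\Delta$--length $\leq \text{(const)}\cdot|\g|$.

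The main work is to show $\g'$ avoids a ball $\mc{B}_\Delta(\alpha(s),\eta\eps r)$ for a suitable universal $\eta\in(0,1)$. The point is a dichotomy applied to each $\g_i$: either the $\G$--diameter of $\g_i$ is small compared to $\eps r$, in which case $\g_i$ (being entirely near $v_iE_{j_i}$, hence within bounded $\G$-distance of its endpoints, which are on $\g$ and so outside $\mc{B}_\G(\alpha(s),\eps r)$) stays $\G$--far from $\alpha(s)$, and therefore its replacement $\g_i'$, which lies in a coset close to $\g_i$'s endpoints, also stays far from $\alpha(s)$ in $\Delta$ once we divide by $C$; or the $\G$--diameter of $\g_i$ is comparable to or larger than $\eps r$, in which case the ball $\g_i'$ avoids in $\Delta$ has radius comparable to $\eps r$, and one checks that this ball (centred at the $\G$--midpoint of $\wh\g_i$) is large enough to force $\g_i'$ away from $\alpha(s)$ too — here one must verify that $\alpha(s)$ is either inside the avoided ball or so close to an endpoint of $\g_i$ (hence $\G$-far from $\alpha(s)$, contradiction) that the conclusion follows. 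Balancing these two regimes fixes the universal constant $\eta$: roughly $\eta$ must be small enough that ``$\g_i$ has small $\G$-diameter'' can be quantified as ``$\diam_\G\g_i \leq \tfrac{1}{2}\eps r$'' while still leaving room in the second regime.

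I expect the \textbf{main obstacle} to be bookkeeping the transition between the three metrics $d_{\Theta_i},d_\Delta,d_\G$ cleanly enough that all constants end up depending only on $C,D_p,K$ and not on $r$, $s$, or the particular ray $\alpha$ — in particular, making sure that the radius of the ball avoided by $\g'$ in $\Delta$ is $\geq \eta\cdot\tfrac{1}{16C}\cdot r$ uniformly, which is where the factor $\tfrac{1}{C}$ (from converting $\G$--distances to $\Delta$--distances) and the factor $\tfrac{1}{K}$ (from Lemma~\ref{rel as cut pts lem}) both enter. A secondary subtlety is that Lemma~\ref{path decomposition} bounds the number of pieces $m$ only in terms of $|\g|$, not by a universal constant, so the length estimate $|\g'|\leq\text{(const)}\cdot|\g|$ must be obtained by the per-piece multiplicative bound $|\g_i'|\leq\text{(const)}\cdot|\g_i|$ and summed, rather than by any uniform control on $m$; this is fine for the length bound but needs to be stated carefully. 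Once these estimates are in place, taking the infimum over $s\geq r$ yields $\delta_\alpha^\Delta(r,\eta\cdot\tfrac{1}{16C})\leq K'\cdot\delta_\alpha^\G(r,\tfrac{1}{16C})$ with $K'$ and $\eta$ as claimed, valid for all $r$ exceeding a threshold $K'$ built from $C,D_p,K$.
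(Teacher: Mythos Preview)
Your overall architecture is right and matches the paper: decompose $\g$ via Lemma~\ref{path decomposition}, replace each $\g_i$ by a path in $\Delta$ lying in the coset $v_iE_{j_i}$, sum the per-piece length bounds (you correctly note that $m$ is not uniformly bounded, so this is the only way), and conclude. The length estimate goes through essentially as you describe.

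The gap is in your ball-avoidance dichotomy. In your ``large'' case you apply Lemma~\ref{rel as cut pts lem} with the centre at the \emph{midpoint} of $\wh\g_i$, obtaining a detour that avoids a ball around that midpoint. But there is no reason this ball should contain $\alpha(s)$: the midpoint of a geodesic between two points far from $\alpha(s)$ can itself be arbitrarily far from $\alpha(s)$, so the avoided ball need not cover $\mc{B}_\Delta(\alpha(s),\eta\eps r)$. Your proposed alternative (``$\alpha(s)$ is close to an endpoint of $\g_i$'') is not the complement of ``$\alpha(s)$ lies in the avoided ball'', so the dichotomy is not exhaustive. (Your ``small'' case also needs the threshold to absorb a factor of $C$ coming from the bi-Lipschitz distortion between $d_\G$ and $d_{\Theta_{j_i}}$, but that is fixable.)

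The paper repairs exactly this point by choosing the centre differently. It first takes a $\Theta_{j_i}$--geodesic $\beta_i$ between the adjusted endpoints and asks whether $\beta_i$ meets $B:=\mc{B}_\Delta(\alpha(s),\eps\cdot\tfrac{r}{16C})$. If not, $\beta_i$ itself is the replacement. If so, it picks $y\in\beta_i\cap B$, reparametrises $\beta_i$ so that $y=\beta_i(0)$, restricts to a sub-segment $[-\rho,\rho]$, and applies Lemma~\ref{rel as cut pts lem}(3) centred at $y$; since $y$ is $\Delta$--close to $\alpha(s)$, the ball avoided now contains $B$. The only thing to check is that $\rho$ can be chosen large enough, and this is where the ``endpoints of $\g_i$ are $\G$--far from $\alpha(s)$'' fact is actually used: if $y$ were too close to an endpoint of $\beta_i$ (so no long symmetric sub-segment exists), that endpoint would be $\G$--close to $\alpha(s)$, contradicting that $\g_i^\pm$ lie on $\g\cu\G\setminus\mc{B}_\G(\alpha(s),\tfrac{r}{16C})$. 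So the ``close to an endpoint'' contradiction you anticipated does appear, but at a different place in the argument --- it rules out a degenerate sub-case after the centre has already been fixed at $y$, rather than serving as one branch of a global dichotomy.
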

\begin{proof}
Let $\alpha\cu\Delta$ be a geodesic ray based at a vertex. Recall that $\alpha$ is $C$--bi-Lipschitz as a path in $(\G,d_{\G})$. Suppose that there exist integers $s,r$ such that the vertices $\alpha(s\pm r)$ can be joined by a path $\g\cu\G$ that avoids the ball $\mc{B}_{\G}(\alpha(s),\tfrac{r}{16C})$. 

Our goal is to construct a path $\g'$ joining $\alpha(s\pm r)$, so that $\g'$ is entirely contained in $\Delta$, avoids the ball $\mc{B}_{\Delta}(\alpha(s),\eta\cdot\tfrac{r}{16C})$, and has length $|\g'|\leq K'\cdot|\g|$. The constants $K'$ and $\eta$ will be determined at the end of the proof. Note that we are only interested in the situation where $r>K'$.

By Lemma~\ref{path decomposition}, we can decompose $\g=\g_1\g_2\dots\g_m$ with the endpoints $\g_i^{\pm}$ in the neighbourhood $\mc{N}_{\G}(v_iE_{j_i},D_p)$ for some $v_i\in V$ and $1\leq j_i\leq k$. Let $\beta_i\cu v_i\Theta_{j_i}$ be a geodesic (for $d_{\Theta_{j_i}}$) whose endpoints $\beta_i^{\pm}$ satisfy $d_{\G}(\beta_i^{\pm},\g_i^{\pm})\leq D_p$. Note that $\beta_i$ is contained in $\Delta$ and is $C$--bi-Lipschitz for $d_{\Delta}$.

Suppose that $r>32C(D_p+K)$. Choose $\eps>0$ small enough that:
\[ \tfrac{1-\eps}{16C}-\eps K\geq \tfrac{1}{32C}. \]

\smallskip
{\bf Claim:} \emph{there exists a path $\beta_i'\cu\Delta$ with the same endpoints as $\beta_i$, such that $\beta_i'$ avoids the ball $B:=\mc{B}_{\Delta}(\alpha(s),\eps\cdot\tfrac{r}{16C})$ and has length $|\beta_i'|\leq K|\beta_i|$.}

\smallskip\noindent
\emph{Proof of claim.}
If $\beta_i$ is disjoint from $B$, we can simply take $\beta_i'=\beta_i$. 

Otherwise, choose a vertex $y\in\beta_i\cap B$ and set $B':=\mc{B}_{\Delta}(y, 2\eps\cdot\tfrac{r}{16C})$. Observe that $B\cu B'$. Thus, it suffices to construct the path $\beta_i'$ so that it avoids the ball $B'$.

Parametrise $\beta_i$ by arc-length so that $\beta_i(0)=y$. Suppose first that the domain of $\beta_i$ contains an interval $[-\rho,\rho]$ satisfying the following three inequalities:
\begin{align*}
\rho&>K, & \rho&>\tfrac{\eps Kr}{8C}, & \rho&>\tfrac{\eps r}{8}.
\end{align*}

Since $\rho>K$ and $\beta_i$ is contained in a left coset of $E_{j_i}$ in $V$, we can apply Lemma~\ref{rel as cut pts lem}, which provides a path $\beta_i'\cu\Delta$ connecting the points $\beta_i(\pm\rho)$, avoiding $\mc{B}_{\Delta}(y,\tfrac{\rho}{K})$ and with length $|\beta_i'|\leq K\rho$. By the second inequality, this path avoids the ball $B'$, which is contained in $\mc{B}_{\Delta}(y,\tfrac{\rho}{K})$. 

We can then prolong $\beta_i'$ along $\beta_i$ until it reaches $\beta_i^{\pm}$. Since $\beta_i$ is $C$--bi-Lipschitz with respect to $d_{\Delta}$, the third inequality ensures that $\beta_i$ can only meet the ball $B'$ at times in the interval $(-\rho,\rho)$. Thus, $\beta_i'$ avoids $B'$ even after prolonging. This is the path required by the claim.

We are left to consider the case when the domain of $\beta_i$ does not contain a sufficiently long interval $[-\rho,\rho]$. Suppose without loss of generality that $d_{\G}(y,\beta_i^-)\leq d_{\G}(y,\beta_i^+)$. Then, recalling that $\beta_i$ is $1$--Lipschitz with respect to $d_{\G}$, we must have:
\begin{align*}
\max\{K, \tfrac{\eps Kr}{8C},\tfrac{\eps r}{8}\} \geq d_{\G}(y,\beta_i^-) \geq d_{\G}(y,\g_i^-)-D_p &\geq d_{\G}(\g_i^-,\alpha(s))-d_{\G}(y,\alpha(s))-D_p \\
&\geq (1-\eps)\tfrac{r}{16C}-D_p.
\end{align*}
The last inequality is due to the fact that $\g_i^-$ lies on $\g$, hence outside $\mc{B}_{\G}(\alpha(s),\tfrac{r}{16C})$, whereas $y$ lies within $B$, hence also in the larger ball $\mc{B}_{\G}(\alpha(s),\eps\cdot\tfrac{r}{16C})$. 

Given our choice of $\eps$, and recalling that $r>32C(D_p+K)$, we obtain a contradiction:
\[ (1-\eps)\tfrac{r}{16C}-D_p\geq [\tfrac{1}{32C}+\eps K]\cdot r-D_p>K+\eps Kr\geq\max\{K, \tfrac{\eps Kr}{8C},\tfrac{\eps r}{8}\} .\]
\hfill$\blacksquare$

\smallskip
Now, we complete the proof of the proposition. Recall that $\beta_i'$ has the same endpoints as $\beta_i$, namely $\beta_i^{\pm}$. Let $\zeta_i\cu\Delta$ be a shortest path connecting the points $\beta_i^+$ and $\beta_{i+1}^-$. Joining the paths $\beta_i'$ provided by the claim by the paths $\zeta_i$, we form a path $\g'$ entirely contained in $\Delta$ and with the same endpoints as $\g$.

Recalling that $\g_i^+=\g_{i+1}^-$ and $d_{\G}(\beta_i^{\pm},\g_i^{\pm})\leq D_p$, we have:
\[d_{\Delta}(\beta_i^+,\beta_{i+1}^-)\leq C\cdot d_{\G}(\beta_i^+,\beta_{i+1}^-)\leq 2CD_p.\]
It follows that $|\zeta_i|\leq 2CD_p$ for each $i$. 

Now, suppose that $r>\tfrac{32C^2D_p}{\eps}$. Then, since each $\beta_i'$ avoids the ball $B=\mc{B}_{\Delta}(\alpha(s),\eps\cdot\tfrac{r}{16C})$, while each $\zeta_i$ has length at most $2CD_p$, the path $\g'$ avoids the ball:
\[\mc{B}_{\Delta}(\alpha(s),\eps\cdot\tfrac{r}{16C}-CD_p)\supseteq\mc{B}_{\Delta}(\alpha(s),\tfrac{\eps}{2}\cdot\tfrac{r}{16C}).\]
Thus, setting $\eta:=\tfrac{\eps}{2}$, the path $\g'$ avoids the ball $\mc{B}_{\Delta}(\alpha(s),\eta\cdot\tfrac{r}{16C})$.

We are left to bound the length of $\g'$:
\begin{align*}
|\g'|\leq \sum |\zeta_i| + \sum |\beta_i'|&\leq m\cdot 2CD_p + K\cdot\sum|\beta_i|=2mCD_p + K\cdot\sum d_{\Theta_{j_i}}(v_i^{-1}\beta_i^-,v_i^{-1}\beta_i^+) \\ 
&\leq 2CD_p\cdot|\g| + KC^2\cdot\sum d_{\G}(\beta_i^-,\beta_i^+) \\
&\leq 2CD_p\cdot|\g| + KC^2\cdot\sum (|\g_i|+2D_p)\leq (2CD_p+KC^2+2KC^2D_p)|\g|.
\end{align*}

In conclusion, taking
\[K':=\max\{32C(D_p+K),\ \tfrac{32C^2D_p}{\eps},\ 2CD_p+KC^2+2KC^2D_p\}\]
and assuming that $r>K'$, we have constructed a path $\g'$ with the same endpoints as $\g$, entirely contained in $\Delta$, avoiding the ball $\mc{B}_{\Delta}(\alpha(s),\eta\cdot\tfrac{r}{16C})$, and with length $|\g'|\leq K'\cdot|\g|$. This shows that
\[\delta_{\alpha}^{\Delta}(r,\eta\cdot\tfrac{1}{16C}) \leq K'\cdot \delta_{\alpha}^{\G}(r,\tfrac{1}{16C}),\]
as required.
\end{proof}

Along with Lemma~\ref{Morse vs divergence}, Proposition~\ref{uniform divergence} has the following consequence.

\begin{cor}\label{uniformly Morse}
For every Morse gauge $N$, there exists a Morse gauge $N'$ such that every $N$--Morse geodesic ray $\alpha\cu\Delta$ is an $N'$--Morse quasi-geodesic in $\G$.
\end{cor}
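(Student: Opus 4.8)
The plan is to deduce the statement by combining Proposition~\ref{uniform divergence} with Lemma~\ref{Morse vs divergence}, reading the latter with $X=\Delta$, $Y=\G$, and with \emph{both} quasi-geodesic rays of Lemma~\ref{Morse vs divergence} taken to be the given ray $\alpha$: the ``Morse'' one is $\alpha$ viewed in $(\Delta,d_{\Delta})$, and the one whose Morse gauge we wish to control is the \emph{same} $\alpha$ viewed in $(\G,d_{\G})$.

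First I would pin down the quasi-geodesic constants. We may assume $\alpha$ is based at a vertex (a general geodesic ray in $\Delta$ is at Hausdorff distance $\leq 1$ from one based at a vertex, which by Lemma~\ref{nearby ray}(2) only changes the Morse gauge in a controlled way). As a geodesic of $(\Delta,d_{\Delta})$, the ray $\alpha$ is a $1$--, hence $C$--, quasi-geodesic there; and by Assumption~\ref{ass2} the inclusion $(\Delta,d_{\Delta})\hookrightarrow(\G,d_{\G})$ is $C$--bi-Lipschitz, so $\alpha$ is a $C$--bi-Lipschitz, hence $C$--quasi-geodesic, path in $\G$. In particular $\alpha$ is \emph{already} a quasi-geodesic in $\G$ with uniformly controlled constant, so only its Morse gauge needs to be controlled.

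Then I would invoke Lemma~\ref{Morse vs divergence} with $L:=C$, $\eps_1:=\eta\cdot\tfrac{1}{16C}$, $\eps_2:=\tfrac{1}{16C}$, and with the constant ``$K$'' of that lemma equal to the constant $K'$ of Proposition~\ref{uniform divergence}. The admissibility conditions $0<\eps_1,\eps_2\leq\tfrac{1}{16L}$ hold because $\eps_2=\tfrac{1}{16C}=\tfrac{1}{16L}$ while $\eps_1=\eta\cdot\tfrac{1}{16C}<\tfrac{1}{16C}$ (as $\eta\in(0,1)$), and the hypothesis $\delta_{\alpha}^{\Delta}(r,\eps_1)\leq K'\cdot\delta_{\alpha}^{\G}(r,\eps_2)$ for all $r>K'$ is exactly the conclusion of Proposition~\ref{uniform divergence}. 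Since $\alpha$ is $N$--Morse in $\Delta$, Lemma~\ref{Morse vs divergence} then yields that $\alpha$ is $N'$--Morse in $\G$, with $N'$ depending only on $N$, $L=C$, $K'$, and $\eps_1=\eta\cdot\tfrac{1}{16C}$; since $C$, $K'$, $\eta$ depend only on the fixed data $C,D_p,K$ of Assumption~\ref{ass2}, the gauge $N'$ depends only on $N$, as claimed.

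I do not expect a genuine obstacle: the two cited results are tailored to dovetail, and the only care needed is the bookkeeping of constants. The points to watch are that $\alpha$ is simultaneously a $C$--quasi-geodesic in $\Delta$ and in $\G$, so that a single $L=C$ works in Lemma~\ref{Morse vs divergence}; that the divergence parameters $\eta\cdot\tfrac{1}{16C}$ and $\tfrac{1}{16C}$ produced by Proposition~\ref{uniform divergence} are admissible for Lemma~\ref{Morse vs divergence} (which uses the inequality $\eps\leq\tfrac{1}{16L}$); and that the resulting $N'$ is uniform in $\alpha$, which it is since it depends only on the displayed constants.
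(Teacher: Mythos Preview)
Your proposal is correct and takes exactly the approach the paper uses: the paper's proof consists of the single sentence that the corollary follows by combining Proposition~\ref{uniform divergence} with Lemma~\ref{Morse vs divergence}. You have simply (and correctly) carried out the bookkeeping that the paper leaves to the reader, with the right identifications $X=\Delta$, $Y=\G$, $L=C$, $\eps_1=\eta\cdot\tfrac{1}{16C}$, $\eps_2=\tfrac{1}{16C}$, and the constant $K'$ playing the role of the ``$K$'' in Lemma~\ref{Morse vs divergence}.
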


We are now ready to prove Theorem~\ref{embedding intro}.

\begin{proof}[Proof of Theorem~\ref{embedding intro}]
By Corollary~\ref{undistorted vertex groups cor}, $V$ is undistorted in $G$. Recall that the relative Morse boundary $(\partial_MV,G)$ is by definition a subset of $\partial_MV$. Since all incident edge groups $E\leq V$ are relatively wide in $V$, we are in the setting of Assumption~\ref{ass2}, so Corollary~\ref{uniformly Morse} implies that $(\partial_MV,G)=\partial_MV$. 

In addition, Corollary~\ref{uniformly Morse} guarantees that the natural inclusion $\partial_MV\hookrightarrow\partial_MG$ is a Morse preserving map in the sense of \cite[Definition~4.1]{Cordes}. Thus, the fact that $\partial_MV\hookrightarrow\partial_MG$ is a topological embedding follows from \cite[Proposition~4.2]{Cordes}.
\end{proof}

\bibliography{mybib}
\bibliographystyle{alpha}

\end{document}